\NeedsTeXFormat{LaTeX2e} 
 
\documentclass[envcountsame]{llncs} 
\usepackage{latexsym} 

\usepackage{times}

\usepackage[dvips]{epsfig} 
\graphicspath{{./Fig_eps/}{./Eps/}} 
\DeclareGraphicsExtensions{.ps,.eps} 
\usepackage{color} 
\usepackage{a4wide}

\title{Local Limit Theorems and Number of Connected Hypergraphs\thanks{The results in this manuscript will appear in {\em Combinatorics, Probability and Computing}  in two papers: {Local limit theorems for the giant component of random hypergraphs} (doi:10.1017/S0963548314000017); {The asymptotic number of connected d-uniform hypergraphs} (doi:10.1017/S0963548314000029)}.}
 
\author{{MICHAEL BEHRISCH}$^1$\thanks{Supported by the DFG research center \textsc{Matheon} in Berlin.},
		{AMIN COJA-OGHLAN}$^2$\thanks{Supported by DFG CO~646.},
		{MIHYUN KANG}$^3$\thanks{Supported by DFG KA 2748/3-1.}
			}

 \institute{		{$^1$}
			Institute of Transportation Systems, German Aerospace Center\\  Rutherfordstrasse 2,  12489 Berlin, Germany \\ 
			\email{michael.behrisch@dlr.de}\vspace{2ex}\\
		{$^2$}
			Goethe University, Mathematics Institute\\ 60054 Frankfurt am Main, Germany\\ 
			\email{acoghlan@math.uni-frankfurt.de}\vspace{2ex}\\
		{$^3$}
			TU Graz, Institut f\"ur Optimierung und Diskrete Mathematik (Math B)\\ Steyrergasse 30,  8010 Graz, Austria\\
			\email{kang@math.tugraz.at}
}
 
\pagestyle{headings}

\newcommand\myhomepage{http://www.informatik.hu-berlin.de/$\sim$coja/} 

\newcommand{\tsigma}{\tau}

\newcommand\eps{\varepsilon}

\newcommand\ZZ{\bbbz} 
\newcommand\Var{\mathrm{Var}} 
\newcommand\Erw{\mathrm{E}} 
\newcommand\pr{\mathrm{P}}

\newcommand\gnp{G(n,p)} 
\newcommand\gnm{G(n,m)} 
\newcommand\hnm{\hdnm} 
\newcommand\hnp{\hdnp}
\newcommand\hnpz{H_d(n,p_z)} 
\newcommand\hnmz{H_d(n,m_z)} 
\newcommand\hdnm{H_d(n,m)} 
\newcommand\hdnp{H_d(n,p)}

\newcommand{\Bin}{{\rm Bin}} 
\newcommand{\eqref}[1]{(\ref{#1})} 
 
\newcommand\comp{\mathcal{C}}

\newcommand{\bink}[2] 
    {{{#1}\choose {#2}}} 
 
\newcommand\HHH{\mathcal{H}} 
 
\newcommand\III{\mathcal{I}} 
\newcommand\JJJ{\mathcal{J}}

\newcommand\QQQ{\mathcal{Q}} 
 
\newcommand\GGG{G} 
 
\newcommand\bc[1]{\left({#1}\right)} 
 
\newcommand\bcfr[2]{\bc{\frac{#1}{#2}}} 
 
\newcommand\brk[1]{\left\lbrack{#1}\right\rbrack}

\newcommand\abs[1]{\left|{#1}\right|} 
\newcommand\uppergauss[1]{\left\lceil{#1}\right\rceil}

\newcommand\RR{\mathbf{R}}

\newcommand{\whp}{w.h.p.} 
\newcommand{\stacksign}[2]{{\stackrel{\mbox{\scriptsize #1}}{#2}}}

\newcommand\order{\mathcal{N}} 
\newcommand\size{\mathcal{M}}

\newcommand{\Karonski}{Karo\'nski}
\newcommand{\Rucinski}{Ruci\'nski}
\newcommand{\Erdos}{Erd\H{o}s}
\newcommand{\Renyi}{R\'enyi}

\newcommand{\Luczak}{\L uczak}

\newcommand\Lem{Lemma}
\newcommand\Prop{Proposition}
\newcommand\Thm{Theorem}
\newcommand\Cor{Corollary}
\newcommand\Sec{Section}

\newcommand{\cN}{{\mathcal N}}

\newcommand{\fracwd}[2]{\bcfr{#1}{#2}}
\newcommand{\inv}[1]{\frac{1}{#1}}

\newcommand{\binnd}{{\bink{n-1}{d-1}}}

\begin{document} 
 
\maketitle 
 
\spnewtheorem{Algo}[theorem]{Algorithm}{\bfseries}{} 
 
\begin{abstract}
Let $\hnp$ signify a random $d$-uniform hypergraph with $n$ vertices in which
each of the $\bink{n}d$ possible edges is present with probability $p=p(n)$ independently,
and let $\hnm$ denote a uniformly distributed $d$-uniform hypergraph with $n$ vertices and $m$ edges.
We derive local limit theorems for the joint distribution of the number of vertices and the number of edges in the largest component
of $\hnp$ and $\hnm$ in the regime $(d-1)\bink{n-1}{d-1}p>1+\eps$, resp.\ $d(d-1)m/n>1+\eps$,
where $\eps>0$ is arbitrarily small but fixed as $n\rightarrow\infty$. 
As an application, we obtain an asymptotic formula for the probability that $\hnp$ or $\hnm$ is connected.. 
In addition, we derive a local limit theorem for the number of edges in $\hnp$, conditioned on $\hnp$ being connected. While most prior work on this subject relies on techniques from enumerative combinatorics, we present a new, purely probabilistic approach.\vspace{2ex}\\
\emph{Key words:} random discrete structures, giant component, local limit theorems, connected hypergraphs.\\
2010 {\em Mathematics subject classification}: Primary 05C80. Secondary 05C65.
\end{abstract}

\section{Introduction and Results}\label{Sec_Intro}

This paper deals with the connected components of random graphs and hypergraphs.
Recall that a \emph{$d$-uniform hypergraph} $H$ is a set $V(H)$ of \emph{vertices} together with a set $E(H)$ of edges
$e\subset V(H)$ of size $|e|=d$.
The \emph{order} of $H$ is the number $|V(H)|$ of vertices of $H$, and the \emph{size} of $H$ is the number $|E(H)|$ of edges.
Moreover, a $2$-uniform hypergraph is called a \emph{graph}.

Further, we say that a vertex $v\in V(H)$ is \emph{reachable} from $w\in V(H)$ if there exists edges $e_1,\ldots,e_k\in E(H)$
such that $v\in e_1$, $w\in e_k$ and $e_i\cap e_{i+1}\not=\emptyset$ for all $1\leq i<k$.
Then reachability is an equivalence relation, and the equivalence classes are called the \emph{components} of $H$.
If $H$ has only a single component, then $H$ is \emph{connected}.

We let $\order(H)$ signify the maximum order of a component of $H$.
Furthermore, for all hypergraphs $H$ the vertex set $V(H)$ will consist of integers.
Therefore, the subsets of $V(H)$ can be ordered lexicographically, and we call the lexicographically first component of $H$
that has order $\order(H)$ the \emph{largest component} of $H$.
In addition, we denote by $\size(H)$ the size of the largest component.

We will consider two models of random $d$-uniform hypergraphs.
The random hypergraph $\hnp$ has the vertex set $V=\{1,\ldots,n\}$, and each of the $\bink{n}d$ possible edges is present
with probability $p$ independently of all others.
Moreover, $\hnm$ is a uniformly distributed hypergraph with vertex set $V=\{1,\ldots,n\}$ and with exactly $m$ edges.
In the case $d=2$, the notation $\gnp=H_2(n,p)$, $\gnm=H_2(n,m)$ is common.
Finally, we say that the random hypergraph $\hnp$ enjoys a certain property $\mathcal{P}$ \emph{with high probability} (``\whp'')
if the probability that $\mathcal{P}$ holds in $\hnp$ tends to $1$ as $n\rightarrow\infty$;
a similar terminology is used for $\hnm$.

\subsection{The Phase Transition and the Giant Component}

In the pioneering papers~\cite{ER1,ER60} on the theory of random graphs,
\Erdos\ and \Renyi\ studied the component structure of the random graph $\gnm$.
Since~\cite{ER1,ER60}, the component structure of random discrete objects (e.g., graphs, hypergraphs, digraphs,~\dots)
has been among the main subjects of discrete probability theory.
One reason for this is the connection to statistical physics and percolation (as ``mean field models'');
another reason is the impact of these considerations on computer science (e.g., due to relations to computational
problems such as \textsc{Max Cut} or \textsc{Max $2$-Sat}~\cite{Sorkin}).

In~\cite{ER1} 
\Erdos\ and \Renyi\ showed that if $t$ remains fixed as $n\rightarrow\infty$ and $m=\frac{n}2(\ln n+t)$, then the probability
that $\gnm$ is connected is asymptotically $\exp(-\exp(-t))$ as $n\rightarrow\infty$.
Since $\gnm$ is a uniformly distributed graph, this result immediately yields the asymptotic number of connected graphs of order $n$ and size $m$.
The relevance of this result,
possibly the most important contribution of~\cite{ER1} is that \Erdos\ and \Renyi\ solved this \emph{enumerative} problem
(``how many connected graphs of order $n$ and size $m$ exist?'') via \emph{probabilistic} methods
(namely, the method of moments for proving convergence to a Poisson distribution).

Furthermore, in~\cite{ER60} \Erdos\ and \Renyi\ went on to study (among others) the component structure of \emph{sparse} random graphs with $O(n)$ edges.
The main result is that the order $\order(\gnm)$ of the largest component undergoes a \emph{phase transition} as $2m/n\sim1$.
Let us state a more general version from~\cite{SPS85}, which covers $d$-uniform hypergraphs:
let either $H=\hnm$ and $c=dm/n$, or $H=\hnp$ and $c=\bink{n-1}{d-1}p$;
we refer to $c$ as the \emph{average degree} of $H$.
Then the result is that
\begin{itemize}
\item[(1)] if $c<(d-1)^{-1}-\eps$ for an arbitrarily small but fixed $\eps>0$, then $\order(H)=O(\ln n)$ \whp\
\item[(2)] By contrast, if $c>(d-1)^{-1}+\eps$, then $H$ features a unique component of order $\Omega(n)$ \whp,
	which is called the \emph{giant component}.
	More precisely, $\order(H)=(1-\rho)n+o(n)$ \whp\, where $\rho$ is the unique solution to the transcendental equation
	\begin{equation}\label{eqCOMV}
	\rho=\exp(c(\rho^{d-1}-1))
	\end{equation}
	that lies strictly between $0$ and $1$.
	Furthermore, the second largest component has order $O(\ln n)$.
\end{itemize}

In this paper we present  a new, \emph{purely probabilistic} approach for investigating the
component structure of sparse random graphs and, more generally, hypergraphs in greater detail.
More precisely, we obtain \emph{local limit theorems} for the joint distribution of the order and size of the largest
component in a random graph or hypergraph $H=\hnm$ or $H=\hnp$.
Thus, we determine the joint limiting distribution of $\order(H)$ and $\size(H)$ precisely.
Furthermore, from these local limit theorems we derive a number of interesting consequences.
For instance, we compute the asymptotic probability that $H$ is connected,
which yields an asymptotic formula for the number of connected hypergraphs of a given order and size.
Thus, as in~\cite{ER1}, we solve a (highly non-trivial) \emph{enumerative} problem via probabilistic techniques.
In addition, we infer a local limit theorem for the distribution of the number of edges of $\hnp$, given
the (exponentially unlikely) event that $\hnp$ is connected.

While in the case of \emph{graphs} (i.e., $d=2$) these results are either known or can be derived from prior
work (in particular, \cite{BCM90}), all our results are new for $d$-uniform hypergraphs with $d>2$.
Besides, we believe that our probabilistic approach is interesting in the case of graphs as well,
because we completely avoid the use of involved enumerative methods, which are the basis of most of
the previous papers on our subject (including~\cite{BCM90}).
In effect, our techniques are fairly generic and may apply to further problems of a related nature.

\subsection{Results}\label{Sec_Results}

\subsubsection{The local limit theorems.}

Our first result is the \emph{local limit theorem} for the joint distribution of $\order(\hnp)$ and $\size(\hnp)$.

\begin{theorem}\label{Thm_Hnplocal}
Let $d\geq2$ be a fixed integer.
For any two compact sets $\III\subset\RR^2$, $\JJJ\subset((d-1)^{-1},\infty)$,
and for any $\delta>0$ there exists $n_0>0$ such that the following holds.
Let $p=p(n)$ be  a sequence such that $c=c(n)=\bink{n-1}{d-1}p\in\JJJ$ for all $n$ and
let $0<\rho=\rho(n)<1$ be the unique solution to (\ref{eqCOMV}).
Further, let
	\begin{eqnarray}\label{eq:defsigmaN}
	\sigma_\order^2&=&\frac{\rho\left(1-\rho + c(d-1)(\rho-\rho^{d-1})\right)}{(1-c(d-1)\rho^{d-1})^2}n,\\
	\sigma_\size^2&=&c^2\rho^d\frac{2 + c(d-1)(\rho^{2d-2} -2\rho^{d-1} +\rho^d) - \rho^{d-1}-\rho^d}{(1-c(d-1)\rho^{d-1})^2}n + (1-\rho^d)\frac{cn}{d},\nonumber\\
	\sigma_{\order\size}&=&c\rho\frac{1-\rho^d - c(d-1)\rho^{d-1}(1-\rho)}{(1-c(d-1)\rho^{d-1})^2}n.\nonumber
	\end{eqnarray}
Suppose that $n\geq n_0$ and that $\nu,\mu$ are integers such that
$x=\nu-(1-\rho)n$ and $y=\mu-(1-\rho^d)\bink{n}dp$ satisfy $n^{-\frac12}\bink{x}{y}\in\III$.
Then letting 
	$$P(x,y)=\inv{2\pi\sqrt{\sigma_\order^2\sigma_\size^2-\sigma_{\order\size}^2}}
		\exp\brk{-\frac{\sigma_\order^2\sigma_\size^2}{2(\sigma_\order^2\sigma_\size^2-\sigma_{\order\size}^2)}
			\left(\frac{x^2}{\sigma_\order^2}-\frac{2\sigma_{\order\size}xy}{\sigma_\order^2\sigma_\size^2}+\frac{y^2}{\sigma_\size^2}\right)}$$
we have
	$(1-\delta)P(x,y) \leq \pr\brk{\order(\hnm)=\nu\wedge\size(\hnm)=\mu} \leq (1+\delta)P(x,y).$
\end{theorem}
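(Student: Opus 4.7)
The plan is to prove the local limit theorem in two stages: first establish a joint central limit theorem (CLT) for $(\order(\hnp),\size(\hnp))$, and then upgrade it to a pointwise density estimate by a ``smoothing'' comparison of neighbouring point probabilities. Throughout I use the supercriticality condition $c>(d-1)^{-1}+\eps$, which guarantees a unique giant component and forces all remaining components to have order $O(\ln n)$.

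For the CLT, I would explore the component of a uniformly random vertex via breadth-first search, tracking the bivariate process $(V_t,E_t)$ counting (vertices discovered, edges exposed) after $t$ steps. Supercriticality ensures that with probability $1-\rho+o(1)$ the exploration hits the giant, in which case $(V_t,E_t)$ concentrates around its mean trajectory with fluctuations of order $\sqrt n$. A martingale CLT applied to the increments then yields joint asymptotic normality of $(\order(\hnp),\size(\hnp))$. The covariance matrix \eqref{eq:defsigmaN} emerges from linearising \eqref{eqCOMV} via the delta method: the factor $(1-c(d-1)\rho^{d-1})^{-2}$ appearing throughout is precisely $|F'(\rho)|^{-2}$ for $F(x)=x-\exp(c(x^{d-1}-1))$, since fluctuations of the survival probability are amplified by $F'(\rho)^{-1}$ when they propagate to $\order$ and $\size$.

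The local refinement proceeds by establishing that consecutive point probabilities vary smoothly, in the sense that the ratio
\[
\frac{\pr[\order(\hnp)=\nu+1,\,\size(\hnp)=\mu+k]}{\pr[\order(\hnp)=\nu,\,\size(\hnp)=\mu]}=1+o(1)
\]
uniformly in $(\nu,\mu)$ over the range of interest, where $k$ is the typical number of edges brought in by a single vertex joining the giant (of order $c\rho^{d-1}$). This is proved by a coupling argument: given a configuration realising one event, one moves a single vertex from outside the giant into the giant by resampling the $\bink{n-1}{d-1}$ potential edges incident to it, and the ratio of $\hnp$-probabilities reduces to a product of explicit binomial factors that are straightforward to control. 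Combined with the CLT, smoothness forces the point probability $\pr[\order=\nu,\size=\mu]$ to agree with the Gaussian density $P(x,y)$ up to $1+o(1)$: the CLT pins down the sum of point probabilities over any $\sqrt n$-box, while smoothness forces the summands within such a box to be essentially constant.

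The main obstacle is the smoothness step in the hypergraph regime $d>2$. For $d=2$ a new vertex joining the giant brings in a deterministic expected number of edges, but for $d>2$ this number is genuinely random, and one must independently pin down the conditional distribution of $\size$ given $\order$. I would handle this by conditioning on the vertex set of the giant, which turns $\size$ into a sum of independent binomials counting $d$-edges supported on prescribed vertices; a univariate local CLT for that sum, combined with the order-direction smoothness, then upgrades the joint CLT to the joint local limit theorem. The uniform error $\delta$ in the statement arises from quantifying both comparison steps uniformly over the compact set $\III$, which is the most delicate part of the argument.
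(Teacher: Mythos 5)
There is a genuine gap at the heart of your plan, namely the treatment of the conditional distribution of $\size(\hnp)$ given $\order(\hnp)=\nu$. You assert that conditioning on the vertex set $G$ of the giant component turns $\size$ into a sum of independent binomials over the $d$-edges supported on $G$. This is false: conditioning on $G$ being the largest component conditions the edges inside $G$ on spanning a \emph{connected} hypergraph, and this conditioning destroys independence and shifts the mean of the edge count by $\Theta(n)$ (indeed, the local limit theorem for $|E(H_d(\nu,p))|$ given connectivity is exactly \Thm~\ref{Thm_edges}, which is a \emph{consequence} of \Thm~\ref{Thm_Hnplocal}, so your route is circular). The quantity that really is binomial given the giant's vertex set is $\bar\size(\hnp)$, the number of edges \emph{outside} the largest component; but $\size(\hnp)$ and $\bar\size(\hnp)$ are not deterministically related in $\hnp$ because the total number of edges is random. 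This is precisely the obstacle the paper identifies, and it is overcome there by a detour: one first obtains the bivariate local law for $(\order,\bar\size)$ in $\hnp$, transfers it to $\hnm$ (where $\size=m-\bar\size$) by deconvolving the mixture identity $\pr\brk{\cdot\mbox{ in }\hnp}=\sum_m\pr\brk{\Bin(\bink{n}d,p)=m}\pr\brk{\cdot\mbox{ in }\hnm}$ via Fourier analysis, and only then returns to $\hnp$ by re-summing over $m$. Without some substitute for this step your argument does not pin down the $\size$-marginal at all.

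A secondary, quantitative problem is the smoothing step. A joint CLT controls sums of point probabilities over boxes containing $\Theta(n)$ lattice points, so to localize to a single point by telescoping adjacent ratios you would need each ratio to be $1+o(n^{-1})$ (or at the very least $1+o(n^{-1/2})$ along $\Theta(\sqrt n)$-long chains), not merely $1+o(1)$; a switching/resampling coupling of the kind you describe will not deliver an error that small, and the switching is in any case not measure-preserving in an obviously controllable way. The paper needs only the much weaker continuity of \Lem~\ref{Lemma_gcont} (ratio $1+\alpha$ over windows of width $\Theta(\sqrt n)$ in the edge count), because the actual localization is done by the convolution identity together with Plancherel, not by telescoping. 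Your CLT-via-exploration idea for the covariance matrix is sound in spirit, but as it stands the proposal neither produces the conditional edge distribution nor supplies smoothness of the strength its own localization scheme requires.
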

\Thm~\ref{Thm_Hnplocal} characterizes the joint limiting distribution of $\order(\hnp)$ and $\size(\hnp)$ \emph{precisely},
because it actually yields the asymptotic probability that $\order$ and $\size$ attain any two values
$\nu=(1-\rho)n+x$, $\mu=(1-\rho^d)\bink{n}dp+y$;
namely, the theorem shows that
	$$
	\pr\brk{\order(\hnp)=\nu\wedge\size(\hnp)=\mu}\sim P(x,y),
	$$
and it guarantees some uniformity of convergence.
We emphasize that $P(x,y)$ is as small as $O(n^{-1})$ as $n\rightarrow\infty$.
Since $P(x,y)$ is just the density function of a bivariate normal distribution,
\Thm~\ref{Thm_Hnplocal} readily yields the following \emph{central limit theorem} for the joint distribution of $\order,\size(\hnp)$.

\begin{corollary}\label{Cor_Hnplocal}
With the notation and the assumptions of \Thm~\ref{Thm_Hnplocal},
suppose that the limit $\Xi=\lim_{n\rightarrow\infty}\frac{\sigma_{\order\size}}{\sigma_\order\sigma_\size}$
exists.
Then the joint distribution of
	$$\frac{\order(\hnp)-(1-\rho)n}{\sigma_\order}\quad\mbox{ and }\quad\frac{\size(\hnp)-(1-\rho^d)\bink{n}dp}{\sigma_\size}$$
converges in distribution to the bivariate normal distribution with mean $0$ and covariance matrix
	$\bc{\begin{array}{cc}1&\Xi\\\Xi&1\end{array}}.$
\end{corollary}
Nonetheless, we stress that \Thm~\ref{Thm_Hnplocal} is considerably more precise than \Cor~\ref{Cor_Hnplocal}.
For the latter result just yields the asymptotic probability that
$x\leq\sigma_\order^{-1}(\order(\hnp)-(1-\rho)n\leq x'$ and simultaneously
$y\leq\sigma_\size^{-1}(\size(\hnp)-(1-\rho^d)n)\leq y'$ for any fixed $x,x',y,y'\in\RR$.
Hence, \Cor~\ref{Cor_Hnplocal} just determines $\order,\size(\hnp)$ up to errors of $o(\sigma_\order)$ and $o(\sigma_\size)$,
while \Thm~\ref{Thm_Hnplocal} actually yields the probability of hitting \emph{exactly} specific values $\nu,\mu$.

The second main result of this paper is a local limit theorem for the joint distribution of
$\order(\hnm)$ and $\size(\hnm)$.

\begin{theorem}\label{Thm_Hnmlocal}
Let $d\geq2$ be a fixed integer.
For any two compact sets $\III\subset\RR^2$, $\JJJ\subset((d-1)^{-1},\infty)$,
and for any $\delta>0$ there exists $n_0>0$ such that the following holds.
Let $m=m(n)$ be  a sequence of integers such that $c=c(n)=dm/n\in\JJJ$ for all $n$ and
let $0<\rho=\rho(n)<1$ be the unique solution to (\ref{eqCOMV}).
Further, let
	\begin{eqnarray*}
	\tsigma_\order^2&=&\rho\frac{1-(c+1)\rho -c(d-1)\rho^{d-1} + 2cd\rho^d - cd\rho^{2d-1}}{(1-c(d-1)\rho^{d-1})^2}n,\\
	\tsigma_\size^2&=&c\rho^d\frac{1 - c(d-2)\rho^{d-1} - (c^2d-cd+1)\rho^d - c^2(d-1)\rho^{2d-2} + 2c(cd-1)\rho^{2d-1} - c^2\rho^{3d-2}}{d(1-c(d-1)\rho^{d-1})^2}n,\\
	\tsigma_{\order\size}&=&c\rho^d\frac{1-c\rho - c(d-1)\rho^{d-1} + (c+cd-1)\rho^d - c \rho^{2d-1}}{(1-c(d-1)\rho^{d-1})^2}n.
	\end{eqnarray*}
Suppose that $n\geq n_0$ and that $\nu,\mu$ are integers such that $x=\nu-(1-\rho)n$ and $y=\mu-(1-\rho^d)m$ satisfy $n^{-\frac12}\bink{x}{y}\in\III$.
Then letting 
	$$Q(x,y)=\inv{2\pi\sqrt{\tsigma_\order^2\tsigma_\size^2-\tsigma_{\order\size}^2}}\exp\brk{-\frac{\tsigma_\order^2\tsigma_\size^2}{2(\tsigma_\order^2\tsigma_\size^2-\tsigma_{\order\size}^2)}\left(\frac{x^2}{\tsigma_\order^2}-\frac{2\tsigma_{\order\size}xy}{\tsigma_\order^2\tsigma_\size^2}+\frac{y^2}{\tsigma_\size^2}\right)}$$
we have
	$(1-\delta)Q(x,y) \leq \pr\brk{\order(\hnm)=\nu\wedge\size(\hnm)=\mu} \leq (1+\delta)Q(x,y).$
\end{theorem}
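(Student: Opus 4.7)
The plan is to reduce Theorem~\ref{Thm_Hnmlocal} to Theorem~\ref{Thm_Hnplocal} by exploiting the standard conditioning identity between the two hypergraph models. Choose $p=m/\bink{n}{d}$, so that the two models have matching expected edge count $\bink{n}{d}p=m=cn/d$ and matching average degree $c$. Writing $N=|E(\hnp)|$, the law of $\hnm$ is that of $\hnp$ conditioned on $N=m$, and hence
\[
\pr[\order(\hnm)=\nu\wedge\size(\hnm)=\mu]=\frac{\pr[\order(\hnp)=\nu,\size(\hnp)=\mu,N=m]}{\pr[N=m]}.
\]

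To unfold the numerator, decompose $N=\size(\hnp)+N^-$, where $N^-$ counts the edges of $\hnp$ that lie outside the largest component $\mathcal C$. Conditional on the vertex set $V(\mathcal C)$, the edges inside $\mathcal C$, the edges of the induced sub-hypergraph on $V\setminus V(\mathcal C)$, and the crossing edges (which must all be absent for $\mathcal C$ to be a component at all) are mutually independent; in particular $\size(\hnp)$ and $N^-$ are conditionally independent given $\order(\hnp)$. Moreover, in the supercritical regime the fixed-point relation $\rho=\exp(c(\rho^{d-1}-1))$ forces $c(d-1)\rho^{d-1}<1$, so the complement is subcritical on $\sim\rho n$ vertices with average degree $c\rho^{d-1}$; hence the constraint that $\mathcal C$ be the \emph{largest} component is satisfied \whp, and $N^-\mid\order(\hnp)=\nu$ may be approximated by $\Bin(\bink{n-\nu}{d},p)$. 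The classical local limit theorem for the binomial distribution then applies to $N^-\mid\order=\nu$ and to the marginal $N\sim\Bin(\bink{n}{d},p)$ in the denominator.

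Plugging Theorem~\ref{Thm_Hnplocal} into the first factor and the binomial LLT into the remaining two yields
\[
\pr[\order(\hnm)=\nu,\size(\hnm)=\mu]\sim P(x,y)\cdot\sqrt{\frac{\bink{n}{d}}{\bink{n-\nu}{d}}}\exp\!\left(-\frac{(m-\mu-\bink{n-\nu}{d}p)^{2}}{2\bink{n-\nu}{d}p(1-p)}\right).
\]
Linearising $\bink{n-\nu}{d}p$ in $x=\nu-(1-\rho)n$ yields $m-\mu-\bink{n-\nu}{d}p = c\rho^{d-1}x-y+O(1)$, and $\sqrt{\bink{n}{d}/\bink{n-\nu}{d}}=\rho^{-d/2}(1+o(1))$. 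Since the product of two Gaussian densities is again Gaussian, the right-hand side is a bivariate Gaussian in $(x,y)$ whose inverse covariance matrix reads
\[
\tilde\Sigma^{-1}=\Sigma^{-1}+\frac{d}{\rho^{d}cn}\begin{pmatrix}c^{2}\rho^{2d-2}&-c\rho^{d-1}\\-c\rho^{d-1}&1\end{pmatrix},
\]
with $\Sigma$ the covariance matrix from Theorem~\ref{Thm_Hnplocal}. Inverting by the Sherman--Morrison identity and substituting the explicit entries of $\Sigma$ produces, after simplification, the $\tilde\sigma_\order^2,\tilde\sigma_\size^2,\tilde\sigma_{\order\size}$ formulas of the theorem, and the $\rho^{-d/2}$ prefactor is exactly absorbed into the new determinantal normalisation (equivalently, $\det\tilde\Sigma=\rho^{d}\det\Sigma$).

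The hard part will be twofold. First, the ``subcritical complement'' step must be made quantitative: conditioning on the complement not spawning a component of order $\geq\nu=\Theta(n)$ must shift the law of $N^-$ only by a factor $1+o(1)$, uniformly over the admissible $(\nu,\mu)$; this requires an exponential large-deviation bound in the subcritical regime, strong enough to dominate the local-limit scale $P(x,y)=O(n^{-1})$. Second, the algebraic verification of the Sherman--Morrison inversion is routine but bookkeeping-intensive; in particular one must carry the $x$-dependent correction in $\bink{n-\nu}{d}p$ one order beyond the mean so that the approximation error inside the exponent is $o(1)$ on the scale $x,y=O(\sqrt n)$ prescribed by $n^{-\frac12}\bink{x}{y}\in\III$.
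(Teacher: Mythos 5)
Your reduction runs in the wrong direction and is circular within this paper's logical structure. You derive Theorem~\ref{Thm_Hnmlocal} from Theorem~\ref{Thm_Hnplocal}, but the paper obtains Theorem~\ref{Thm_Hnplocal} \emph{as a consequence of} Theorem~\ref{Thm_Hnmlocal}, by summing the $\hnm$-probabilities against the binomial weights as in (\ref{eqRueckschritt0}) (see \Sec~\ref{Sec_Hnplocal}); your conditioning identity is exactly that mixture relation read backwards. So unless you supply an independent proof of the bivariate local limit theorem for $(\order,\size)(\hnp)$, nothing has been proved. And such an independent proof is precisely the crux: the only bivariate information that is directly accessible in $\hnp$ is the joint law of $\order$ and the number $\bar\size$ of edges \emph{outside} the largest component, because given the vertex set of the giant the outside edge count is (essentially) $\Bin\bc{\bink{n-\nu}d,p}$ --- this is your conditional-independence observation, and it appears in the paper as \Lem~\ref{Lemma_fexplicitAux}/(\ref{eqBivOut1}). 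It does \emph{not} give the joint law of $(\order,\size)(\hnp)$, since $\size$ is the number of edges \emph{inside} the giant and the total edge count is random; the paper makes this point explicitly. The missing idea is the deconvolution step: the paper writes the accessible quantity $f(z)=n\pr\brk{\order(\hnpz)=\nu\wedge\bar\size(\hnpz)=\bar\mu}$ as a near-convolution $g*\phi$ of the unknown $\hnm$-quantity $g$ over the one-parameter family $p_z$, and inverts this by Fourier analysis (\Lem s~\ref{Lemma_fg}, \ref{Lemma_fh}, \ref{Prop_Fourier}) to extract $g(0)$, which \emph{is} Theorem~\ref{Thm_Hnmlocal} because $\bar\size(\hnm)=m-\size(\hnm)$ is deterministic in that model. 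Your proposal contains no substitute for this step.

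That said, the analytic content of your computation is correct and is, in effect, the paper's own (easy) passage from Theorem~\ref{Thm_Hnmlocal} to Theorem~\ref{Thm_Hnplocal} run in reverse: the factorisation of the numerator, the linearisation $m-\mu-\bink{n-\nu}{d}p=c\rho^{d-1}x-y+O(1)$, the rank-one update of the inverse covariance matrix, and the identity $c^2\rho^{2d-2}\sigma_\order^2-2c\rho^{d-1}\sigma_{\order\size}+\sigma_\size^2=(1-\rho^d)cn/d$ (equivalently $\det\tilde\Sigma=\rho^d\det\Sigma$) all check out against the stated variance formulas. So your argument is a valid consistency check relating $P(x,y)$ and $Q(x,y)$, and would become a proof of whichever of the two theorems one did \emph{not} already have --- but the theorem one can actually establish first, by the Fourier argument, is the $\hnm$ version, not the $\hnp$ version.
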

Similarly as \Thm~\ref{Thm_Hnplocal}, \Thm~\ref{Thm_Hnmlocal} characterizes the joint limiting distribution
of $\order,\size(\hnm)$ precisely.
Once more the limit resembles a bivariate normal distribution, so that we can infer the following
central limit theorem.

\begin{corollary}\label{Cor_Hnmlocal}
With the notation and the assumptions of~\Thm~\ref{Thm_Hnmlocal},
suppose that the limit $\Xi=\lim_{n\rightarrow\infty}\frac{\tau_{\order\size}}{\tau_\order\tau_\size}$ exists.
Then the joint distribution of
	$$\frac{\order(\hnm)-(1-\rho)n}{\tau_\order}\quad\mbox{and}\quad\frac{\size(\hnm)-(1-\rho^d)m}{\tau_\size}$$
converges in distribution to the bivariate normal distribution with mean $0$ and covariance matrix
	$\bc{\begin{array}{cc}1&\Xi\\\Xi&1\end{array}}$.
\end{corollary}

\subsubsection{The probability of connectedness.}
As an application of the local limit theorem for $\hnp$ (\Thm~\ref{Thm_Hnplocal}), we obtain
the following formula for the asymptotic probability that a random hypergraph $H_d(\nu,\mu)$ is connected,
and thus for the number of connected hypergraphs of a given order and size.

\begin{theorem}\label{Thm_Cnm}\label{thm:cnm}
Let $d\geq2$ be a fixed integer.
For any compact set $\JJJ\subset(d(d-1)^{-1},\infty)$,
and for any $\delta>0$ there exists $\nu_0>0$ such that the following holds.
Let $\mu=\mu(\nu)$ be  a sequence of integers such that $\zeta=\zeta(\nu)=d\mu/\nu\in\JJJ$ for all $\nu$.
Then the there exists a unique number $0<r=r(\nu)<1$ such that
\begin{equation}\label{eq:hnp}
r=\exp\bc{-\zeta\cdot\frac{(1-r)(1-r^{d-1})}{1-r^d}}.
\end{equation}
Let $\Phi_d(r,\zeta)=r^\frac{r}{1-r}(1-r)^{1-\zeta}(1-r^d)^\frac{\zeta}{d}$ for $d\ge 2$.
Furthermore, define, for $d> 2$, 
\begin{eqnarray*}
R_d(\nu,\mu)
&=&\frac{1-r^d- (1-r)(d-1)\zeta r^{d-1}}{\sqrt{\big(1-r^d+ \zeta(d-1)(r-r^{d-1})\big)(1-r^d) -d \zeta r(1-r^{d-1})^2}}\nonumber\\
&&\cdot \exp\bc{\frac{(d-1)\zeta(r -2r^d+r^{d-1})}{2(1-r^d)}}\cdot\Phi_d(r,\zeta)^{\nu},
\end{eqnarray*}
and for $d=2$, 
\begin{eqnarray*}
R_2(\nu,\mu)&=&\frac{1+r- \zeta r}{\sqrt{\bc{1+r}^2 -2\zeta r}}\cdot \exp\bc{\frac{2\zeta r +\zeta^2r}{2(1+r)}}\cdot\Phi_2(r,\zeta)^{\nu}.
\end{eqnarray*}

Finally, let $c_d(\nu,\mu)$ denote the probability that $H_d(\nu,\mu)$ is connected.
Then for all $\nu>\nu_0$ we have 
	$$(1-\delta)R_d(\nu,\mu)<c_d(\nu,\mu)<(1+\delta)R_d(\nu,\mu).$$
\end{theorem}
To prove \Thm~\ref{Thm_Cnm} we
shall consider a ``larger'' hypergraph $\hnp$ such that the
expected order and size of the largest component of $\hnp$ are $\nu$ and $\mu$.
Then, we will infer the probability that $H_d(\nu,\mu)$ is conncected from the local limit theorem
for $\order(\hnp)$ and $\size(\hnp)$.
Indeed, this proof needs the full strength of \Thm~\ref{Thm_Hnplocal};
i.e., the central limit theorem provided by \Cor~\ref{Cor_Hnplocal} is insufficient to obtain \Thm~\ref{thm:cnm}.

Furthermore, we have the following theorem on the asymptotic probability that $H_d(\nu,p)$ is connected.

\begin{theorem}\label{Thm_Cnp}\label{thm:cd}
Let $d\geq2$ be a fixed integer.
For any compact set $\JJJ\subset(0,\infty)$,
and for any $\delta>0$ there exists $\nu_0>0$ such that the following holds.
Let $p=p(\nu)$ be  a sequence such that $\zeta=\zeta(\nu)=\bink{\nu-1}{d-1}p\in\JJJ$ for all $\nu$.
Then there is a unique $0<\varrho=\varrho(\nu)<1$ such that
\begin{equation}\label{eq:varrho}
\varrho=\exp\brk{\zeta\frac{\varrho^{d-1}-1}{(1-\varrho)^{d-1}}}.
\end{equation}

Let $\Psi_d(\varrho,\zeta)=(1-\varrho)\varrho^{\frac{\varrho}{1-\varrho}}
		\exp\left(\frac{\zeta}{d}\cdot\frac{1-\varrho^d-(1-\varrho)^d}{(1-\varrho)^d}\right)$  for $d\ge 2$.
Define, for $d>2$,
\begin{eqnarray*}
S_d(\nu,p) 
&=& \frac{1-\zeta(d-1)\fracwd{\varrho}{1-\varrho}^{d-1}}{\sqrt{1 + \zeta(d-1)\frac{\varrho-\varrho^{d-1}}{(1-\varrho)^d}}} \cdot \exp\bc{\frac{\zeta(d-1)\varrho(1-\varrho^d-(1-\varrho)^d)}{2(1-\varrho)^d}}\\
&& \cdot \exp\bc{\frac{\zeta(d-1)\varrho}2\bc{\bcfr{\varrho}{1-\varrho}^{d-2}+1}} \cdot \Psi_d(\varrho,\zeta)^{\nu},
\end{eqnarray*}
and for $d=2$, 
\begin{eqnarray*}
S_2(\nu,p)&=&
\bc{1-\frac{\zeta}{e^{\zeta}-1}}\cdot \exp\bc{\frac{\zeta (2 +\zeta)}{2(e^{\zeta}-1)}} \cdot  (1-e^{-\zeta})^{\nu}.
\end{eqnarray*}
Finally, let $c_d(\nu,p)$ denote the probability that $H_d(\nu,p)$ is connected. 
Then for all $\nu>\nu_0$ we have
	$$(1-\delta)S_d(\nu,p)<c_d(\nu,p)<(1+\delta)S_d(\nu,p).$$
\end{theorem}

\begin{remark}
The formula for $S_d(\nu,p)$ for $d\ge 2$ given in an extended abstract version~\cite{BCKrandom} of this work was incorrect.
\end{remark}

Interestingly, if we choose $p=p(\nu)$ and $\mu=\mu(\nu)$ in such a way that $\bink{\nu}dp=\mu$ for all $\nu$
and set $\zeta=\bink{\nu-1}{d-1}p=d\mu/\nu$,
then the function $\Psi(\zeta)$ from \Thm~\ref{thm:cd} is strictly bigger than $\Phi(\zeta)$ from \Thm~\ref{thm:cnm}.
Consequently, the probability that $\hnp$ is connected exceeds the probability that $\hnm$ is connected by
an exponential factor.
The reason is that in $\hnp$ the total number of edges is a (bionmially distributed) random variable.
Roughly speaking, $\hnp$ ``boosts'' its probability of connectivity by including a number of edges that exceeds
$\bink{n}dp$ considerably.
That is, the total number of edges \emph{given that $\hnp$ is connected} is significantly bigger than $\bink{n}dp$.

\subsubsection{The distribution of $\size(\hnp)$ given connectivity.}
The following local limit theorem for the total number of edges in $\hnp$ given 
that $\hnp$ is connected quantifies this observation.

\begin{theorem}\label{Thm_edges}\label{thm:edgedist}
Let $d\geq2$ be a fixed integer.
For any two compact sets $\III\subset\RR$, $\JJJ\subset(0,\infty)$,
and for any $\delta>0$ there exists $\nu_0>0$ such that the following holds.
Suppose that $0<p=p(\nu)<1$ is sequence such that $\zeta=\zeta(\nu)=\bink{\nu-1}{d-1}p\in\JJJ$ for all $\nu$.
Moreover, let $0<\varrho=\varrho(\nu)<1$ be the unique solution to \eqref{eq:varrho}, and set
	$$
	\hat\mu= \uppergauss{\frac{\zeta\nu}{d}\cdot\frac{1-\varrho^d}{(1-\varrho)^d}},\ 
	\hat\sigma^2=\frac{\zeta\nu}{d(1-\varrho)^d}\brk{1-\frac{\zeta d\varrho(1-\varrho^{d-1})^2}{(1-\varrho)^d + \zeta(d-1)(\varrho-\varrho^{d-1})}-\varrho^d}.
	$$
Then for all $\nu\geq \nu_0$ and all integers $y$ such that $y\nu^{-\frac12}\in\III$ we have
\begin{eqnarray*}
\frac{1-\delta}{\sqrt{2\pi}\hat\sigma}\exp\left(-\frac{y^2}{2\hat\sigma^2}\right)&\leq&
\pr\brk{\abs{E(H_d(\nu,p))} = \hat\mu + y\;|\;H_d(\nu,p)\textrm{ is connected}}
\leq\frac{1+\delta}{\sqrt{2\pi}\hat\sigma}\exp\left(-\frac{y^2}{2\hat\sigma^2}\right).
\end{eqnarray*}
\end{theorem}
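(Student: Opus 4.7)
My plan is to combine Bayes' formula with the two connectivity estimates \Thm~\ref{thm:cnm} and \Thm~\ref{thm:cd}, together with the standard local limit theorem for the binomial distribution, and then read off the Gaussian by Taylor-expanding around the saddle-point value $\hat\mu$.

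\textbf{Bayes setup.} Since $H_d(\nu,p)$ conditioned on $|E(H_d(\nu,p))|=m$ is distributed as $H_d(\nu,m)$, writing $N=\bink{\nu}{d}$ we have
$$\pr\brk{|E(H_d(\nu,p))|=\hat\mu+y\mid H_d(\nu,p)\textrm{ is connected}} = \frac{\bink{N}{\hat\mu+y}p^{\hat\mu+y}(1-p)^{N-\hat\mu-y}\,c_d(\nu,\hat\mu+y)}{c_d(\nu,p)}.$$
By \Thm~\ref{thm:cnm} applied with $\mu=\hat\mu+y$ (permissible because $d(\hat\mu+y)/\nu$ stays in a compact subset of $(d/(d-1),\infty)$ uniformly in $y$ with $y/\sqrt\nu\in\III$ and $\nu$ large), $c_d(\nu,\hat\mu+y)\sim R_d(\nu,\hat\mu+y)$. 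By \Thm~\ref{thm:cd}, $c_d(\nu,p)\sim S_d(\nu,p)$. Finally, Stirling's formula applied to $\bink{N}{\hat\mu+y}$ (valid since $\hat\mu/N$ is bounded away from $0$ and $1$) replaces the binomial factor by an explicit Gaussian-type expression with exponent $N F(\hat\mu/N+y/N)$, where $F(q)=-q\log(q/p)-(1-q)\log((1-q)/(1-p))$.

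\textbf{Expansion around the saddle point.} After these substitutions the conditional probability is an explicit function of $y$ of the form $A(y)\exp(\nu G(\hat\mu/\nu+y/\nu))$, with a smooth function $G$ gathering the exponent of Stirling and of $\Phi(d\cdot/\nu)^\nu/\Psi(\zeta)^\nu$, and a slowly varying prefactor $A$. Taylor-expanding the exponent around $y=0$, one needs to verify: \emph{(i)} the constant pieces (including the Stirling square root and the explicit rational prefactors of $R_d$ and $S_d$) combine to give exactly $(2\pi\hat\sigma^2)^{-1/2}$; \emph{(ii)} the coefficient of $y$ vanishes, which is the saddle-point equation characterising the choice of $\hat\mu$ in the statement; \emph{(iii)} the coefficient of $y^2$ equals $-1/(2\hat\sigma^2)$; \emph{(iv)} the cubic remainder contributes $O(y^3/\nu^2)=o(1)$ uniformly for $y/\sqrt\nu\in\III$.

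\textbf{Principal obstacle.} The analytic work reduces to algebraic identifications. The observation making them tractable is that the auxiliary variable $r$ of \Thm~\ref{thm:cnm}, evaluated at $\zeta'=d\hat\mu/\nu$, coincides with the variable $\varrho$ of \Thm~\ref{thm:cd}: substituting $\zeta'=\zeta(1-\varrho^d)/(1-\varrho)^d$ (the relation built into the definition of $\hat\mu$) into \eqref{eq:hnp} simplifies to \eqref{eq:varrho}, whence $r=\varrho$. With this identification, steps (ii) and (iii) become manipulations of $dr/d\zeta'|_{r=\varrho}$, obtained by implicit differentiation of \eqref{eq:hnp}, together with the rational prefactors of $\Phi,\Psi,R_d,S_d$. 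The bulk of the technical work lies in carrying out these manipulations without error and in matching the resulting quadratic coefficient with the somewhat intricate closed form of $\hat\sigma^2$ displayed in the theorem; checking (iv) is then a routine bound on $G'''$ valid throughout a neighbourhood of $\hat\mu/\nu$.
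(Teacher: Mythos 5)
Your argument is sound, but it takes a genuinely different route from the paper. The paper embeds the problem into an ambient $\hnp$ with $n$ chosen so that $(1-\rho)n\approx\nu$: conditioned on $\order(\hnp)=\nu$, the largest component \emph{is} a connected $H_d(\nu,p)$, so the desired conditional law is exactly $\pr\brk{\size(\hnp)=\mu\,|\,\order(\hnp)=\nu}$, which is read off directly as the ratio of the bivariate Gaussian of \Thm~\ref{Thm_Hnplocal} to its marginal; the Gaussian in $y$ then appears with no saddle-point analysis at all. You instead apply Bayes with $c_d(\nu,\hat\mu+y)$ and $c_d(\nu,p)$ from \Thm s~\ref{thm:cnm} and~\ref{thm:cd} plus Stirling, and recover the Gaussian by expanding the exponent. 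This is logically valid (no circularity: both connectivity theorems are established independently of \Thm~\ref{thm:edgedist}), and your key structural observations check out --- substituting $\zeta'=\zeta(1-\varrho^d)/(1-\varrho)^d$ into \eqref{eq:hnp} does reduce it to \eqref{eq:varrho}, so $r(\zeta')=\varrho$; and since $\partial\log\Phi/\partial r=0$ along \eqref{eq:hnp}, one gets $\frac{d}{d\zeta'}\log\Phi=-\log(1-r)+\frac1d\log(1-r^d)$, whence the stationarity condition becomes $\hat\mu=\bink{\nu}dp\cdot(1-r^d)/(1-r)^d$, exactly the $\hat\mu$ of the statement. What your route costs is that the cancellations the paper gets for free must be verified by hand: once you substitute the proofs of \Thm s~\ref{thm:cnm} and~\ref{thm:cd} (both of which express $c_d$ via expected numbers of components in an auxiliary $\hnp$), your Bayes ratio collapses algebraically back to $\pr\brk{\order=\nu\wedge\size=\mu}/\pr\brk{\order=\nu}$, so steps (i)--(iii) are guaranteed to succeed but amount to re-deriving that identity through the explicit formulas $R_d$, $S_d$, $\Phi$, $\Psi$ (and separately for $d=2$ and $d>2$, since those formulas branch). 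Two points you assert but should actually prove: that $\zeta(1-\varrho^d)/(1-\varrho)^d$ stays in a compact subset of $(d(d-1)^{-1},\infty)$ so that \Thm~\ref{thm:cnm} applies uniformly over the window $|y|=O(\sqrt\nu)$, and the uniformity of the $1+o(1)$ factors over that window, which the theorem's compact-set formulation does supply.
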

In the case $d=2$ the unique solution to~(\ref{eq:varrho}) is $\varrho=\exp(-\zeta)$, whence the formulas from \Thm~\ref{Thm_edges} simplify to
$\hat\mu=\uppergauss{\frac{\zeta\nu}2\mathrm{coth}(\zeta/2)}$ and
$\hat\sigma^2=\frac{\zeta\nu}2\cdot\frac{1-2\zeta\exp(-\zeta)-\exp(-2\zeta)}{(1-\exp(-\zeta))^2}$.

\subsection{Related Work}\label{Sec_Related}

\subsubsection{Graphs.}
Bender, Canfield, and McKay~\cite{BCM90} were the first to compute the asymptotic probability that a random graph $\gnm$
is connected for \emph{any} ratio $m/n$.
Although they employ a probabilistic result from \Luczak~\cite{Luc90} to simplify their arguments,
their proof is based on enumerative methods.
Additionally, using their formula for the connectivity probability of $\gnm$, Bender, Canfield, and McKay~\cite{BCM92} inferred
the probability that $\gnp$ is connected as well as a central limit theorem for the number of edges of $\gnp$ given connectivity.
Moreover, it is possible (though somewhat technical) to derive local limit theorems for $\order,\size(\gnm)$ and
$\order,\size(\gnp)$ from the main result of~\cite{BCM90}. 
In fact, Pittel and Wormald~\cite{PW,PW05} recently used enumerative arguments to derive an improved version of the
main result of~\cite{BCM90} and to obtain a local limit theorem that in
addition to $\order$ and $\size$ also includes the order and size of the $2$-core.
In summary, in~\cite{BCM90,BCM92,PW,PW05} enumerative results on the number of connected graphs of a given order
and size were used to infer the distributions of $\order,\size(\gnp)$ and $\order,\size(\gnm)$.
By contrast, in the present paper we use the converse approach: employing probabilistic methods,
we first determine the distributions of $\order,\size(\gnp)$ and $\order,\size(\gnm)$, and from this we derive
the number of connected graphs with given order and size.

The asymptotic probability that $\gnp$ is connected was first computed by Stepanov~\cite{Ste70}
(this problem is significantly simpler than computing the probability that $\gnm$ is connected).
He also obtained a local limit theorem for $\order(\gnp)$ (but his methods do not yield the joint distribution of $\order(\gnp)$ and $\size(\gnp)$).
Moreover, 
Pittel~\cite{P90} derived central limit theorems for $\order(\gnp)$ and $\order(\gnm)$
from his result on the joint distribution of the numbers of trees of given sizes outside the giant component.
The arguments in both~\cite{P90,Ste70} are of enumerative and analytic nature.

Furthermore, a few authors have applied probabilistic arguments to problems related to the present work.
For instance, O'Connell~\cite{OCo98} employed the theory of large deviations in order to estimate the probability
that $\gnp$ is connected up to a factor $\exp(o(n))$.
While this result is significantly less precise than Stepanov's, O'Connell's proof is simpler.
In addition, Barraez, Boucheron, and Fernandez de la Vega~\cite{BBF00} exploited the analogy between the
component structure of $\gnp$ and branching processes to derive a central limit theorem
for the joint distribution of $\order(\gnp)$ and the \emph{total} number of edges in $\gnp$;
however, their techniques do not yield a \emph{local} limit theorem.
Finally, Spencer and van der Hofstad~\cite{HS05} used a novel perspective on the branching process
argument to rederive the formula of Bender, Canfield, and McKay~\cite{BCM90} for the number of connected graphs.

\subsubsection{Hypergraphs.}
In contrast to the case of graphs ($d=2$), little is known about the phase transition and the connectivity probability
of random $d$-uniform hypergraphs with $d>2$.
In fact, to our knowledge the arguments used in all of the aforementioned papers 
do not extend to the case $d>2$.

\Karonski\ and \Luczak~\cite{KL97} derived an asymptotic formula for the number of connected
$d$-uniform hypergraphs of order $n$ and size $m=\frac{n}{d-1}+o(\ln n/\ln\ln n)$ via combinatorial techniques.
Since the minimum number of edges necessary for connectivity is $\frac{n-1}{d-1}$, this formula addresses \emph{sparsely} connected
hypergraphs.
Using this result, \Karonski\ and \Luczak~\cite{KL02} investigated the phase transition in $\hnm$ and $\hnp$.
They obtained local limit theorems for the joint distribution of $\order,\size(\hnm)$ and $\order,\size(\hnp)$ in the \emph{early supercritical phase},
i.e., their result apply to the case $m=\bink{n}dp=\frac{n}{d(d-1)}+o(n^{2/3}(\ln n/\ln\ln n)^{1/3})$.
Furthermore, Andriamampianina and Ravelomanana~\cite{AR05} extended the result from \cite{KL97}
to the regime $m=\frac{n}{d-1}+o(n^{1/3})$ via enumerative techniques.
In addition, relying on~\cite{AR05}, Ravelomanana and Rijamamy~\cite{RR} extended~\cite{KL02}
to $m=\bink{n}dp=\frac{n}{d(d-1)}+o(n^{7/9})$.
Note that all of these results either deal with \emph{very sparsely} connected hypergraphs (i.e., $m=\frac{n}{d-1}+o(n)$),
or with the \emph{early} supercritical phase (i.e., $m=\bink{n}dp=\frac{n}{d(d-1)}+o(n)$).
By contrast, the results of this paper concern connected hypergraphs with $m=\frac{n}{d-1}+\Omega(n)$ edges
and the component structure of random hypergraphs $\hnm$ or $\hnp$ with $m=\bink{n}dp=\frac{n}{d(d-1)}+\Omega(n)$.
Thus, our results and those from~\cite{AR05,KL97,KL02,RR} are complementary.

The regime of $m$ and $p$ that we deal with in the present work was previously studied
by Coja-Oghlan, Moore, and Sanwalani~\cite{CMS04} using probabilistic arguments.
Setting up an analogy between a certain branching process and the component structure of $\hnp$,
they computed the expected order and size of the largest component of $\hnp$
along with the variance of $\order(\hnp)$.
Furthermore, they computed the probability that $\hnm$ or $\hnp$ is connected \emph{up to a constant factor},
and estimated the \emph{expected} number of edges of $\hnp$ given connectivity.
Note that \Thm s~\ref{Thm_Cnm}, \ref{Thm_Cnp}, and~\ref{Thm_edges} enhance these results considerably,
as they yield \emph{tight} asymptotics for the connectivity probability, respectively the precise limiting
distribution of the number of edges given conncectivity.

While the arguments of~\cite{CMS04} by themselves are not strong enough to yield local limit theorems,
combining the branching process arguments with further probabilistic techniques,
in~\cite{BCK06} we inferred a local limit theorem for $\order(\hnp)$.
\Thm s~\ref{Thm_Hnplocal} and~\ref{Thm_Hnmlocal} extend this result by giving local limit theorems for the \emph{joint} distribution
of $\order$ and $\size$.

\subsection{Techniques and Outline}\label{Sec_TechOutline}

To prove \Thm s~\ref{Thm_Hnplocal} and~\ref{Thm_Hnmlocal}, we build upon a qualitative result on
the connected components of $\hnp$ from Coja-Oghlan, Moore, and Sanwalani~\cite{CMS04}, and
a local limit theorem for $\order(\hnp)$ from our previous paper~\cite{BCK06}
(\Thm s~\ref{Thm_global}  and~\ref{Thm_Nlocal}, cf.~\Sec~\ref{Sec_Pre}).
The proofs of both of these ingredients solely rely on probabilistic reasoning (namely, branching processes
and Stein's method for proving convergence to a Gaussian).

In \Sec~\ref{Sec_NMlocal} we show that (somewhat surprisingly) the \emph{univariate} local limit theorem for
$\order(\hnp)$ can be converted into a \emph{bivariate} local limit theorem for $\order(\hnm)$ and $\size(\hnm)$.
To this end, we observe that the local limit theorem for $\order(\hnp)$ implies a bivariate local limit theorem
for the joint distribution of $\order(\hnp)$ and the number $\bar\size(\hnp)$ of edges \emph{outside} the largest component.
Then, we will set up a relationship between the joint distribution of $\order,\bar\size(\hnp)$ and that of $\order,\bar\size(\hnm)$.
Since we already know the distribution of $\order,\bar\size(\hnp)$,
we will be able to infer the joint distribution of $\order,\bar\size(\hnm)$ via Fourier analysis.
As in $\hnm$ the \emph{total} number of edges is fixed (namely, $m$), we have $\bar\size(\hnm)=m-\size(\hnm)$.
Hence, we obtain a local limit theorem for the joint distribution of $\order,\size(\hnm)$, i.e., \Thm~\ref{Thm_Hnmlocal}.
Finally, \Thm~\ref{Thm_Hnmlocal} easily implies \Thm~\ref{Thm_Hnplocal}.
We consider this Fourier analytic approach for proving the bivariate local limit theorems the main contribution of the present work.

Furthermore, in \Sec~\ref{sect:cnm} we derive \Thm~\ref{Thm_Cnm} from \Thm~\ref{Thm_Hnplocal}.
The basic reason why this is possible is that \emph{given} that the largest component of $\hnp$ has order $\nu$ and size $\mu$,
this component is a uniformly distributed random hypergraph with these parameters.
Indeed, this observation was also exploited by \Luczak~\cite{Luc90} to estimate the number of connected
graphs up to a polynomial factor, and in~\cite{CMS04}, where an explicit relation between
the numbers $C_d(\nu,\mu)$ and $\pr\brk{\order(\hnp)=\nu\wedge\size(\hnp)=\mu}$ was derived (cf.\ \Lem~\ref{Lemma_CnmAux} below).
Combining this relation with \Thm~\ref{Thm_Hnplocal}, we obtain \Thm~\ref{Thm_Cnm}.
Moreover, in \Sec s~\ref{sect:proofc} and~\ref{sect:edgedist} we use similar arguments to establish \Thm s~\ref{Thm_Cnp} and~\ref{Thm_edges}.

\section{Preliminaries}\label{Sec_Pre}

We shall make repeated use of the following \emph{Chernoff bound} on the tails of a binomially distributed variable $X=\Bin(\nu,q)$
(cf.~\cite[p.~26]{JLR00} for a proof):
for any $t>0$ we have
	\begin{equation}\label{eqChernoff}
	\pr\brk{\abs{X-\Erw(X)}\geq t}\leq2\exp\bc{-\frac{t^2}{2(\Erw(X)+t/3)}}.
	\end{equation}
Moreover, we employ the following \emph{local limit theorem} for the binomial distribution (e.g.,~\cite[Chapter~1]{BB}).
\begin{proposition}\label{Prop_Bin}
Suppose that $0\leq p=p(n)\leq1$ is a sequence such that $np(1-p)\rightarrow\infty$ as $n\rightarrow\infty$.
Let $X=\Bin(n,p)$.
Then for any sequence $x=x(n)$ of integers such that $|x-np|=o(np(1-p))^{2/3}$,
	$$\pr\brk{X=x}\sim\frac{1}{\sqrt{2\pi np(1-p)}}\exp\bc{-\frac{(x-np)^2}{2p(1-p)n}}\qquad
		\mbox{as }n\rightarrow\infty.$$
\end{proposition}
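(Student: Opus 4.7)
The plan is to prove Proposition~\ref{Prop_Bin} by a direct computation combining Stirling's formula with a second-order Taylor expansion of the logarithm. First I would write $\pr[X=x]=\binom{n}{x}p^x(1-p)^{n-x}$ and apply Stirling's approximation $k!=\sqrt{2\pi k}(k/\eul)^k(1+O(1/k))$ to each of the three factorials. Since the hypothesis $np(1-p)\to\infty$ together with $|x-np|=o((np(1-p))^{2/3})$ forces both $x\to\infty$ and $n-x\to\infty$, the Stirling error terms are uniformly $o(1)$, and the polynomial prefactor simplifies to $(2\pi np(1-p))^{-1/2}(1+o(1))$ after replacing $x(n-x)/n$ by its limit $np(1-p)$.

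Next I would handle the exponential part. Setting $y=x-np$ and $q=1-p$, the remaining contribution to $\log\pr[X=x]$ equals
$$L\ :=\ -x\log\bigl(1+\frac{y}{np}\bigr)-(n-x)\log\bigl(1-\frac{y}{nq}\bigr).$$
Expanding each logarithm via $\log(1+u)=u-u^2/2+O(u^3)$ and substituting $x=np+y$, $n-x=nq-y$, the terms linear in $y$ cancel exactly, and the quadratic ones combine to precisely $-y^2/(2npq)$, which is the exponent appearing in the claimed Gaussian. The leftover is of order $O(y^3/(np)^2+y^3/(nq)^2)$.

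The one point that needs genuine care is uniform control of this remainder when $p$ is allowed to be small or close to $1$. Writing $\sigma^2=npq$, the hypothesis $|y|=o(\sigma^{4/3})$ gives $y^3=o(\sigma^4)$, so that $y^3/(np)^2=o(\sigma^4/(np)^2)=o(q^2)=o(1)$ and symmetrically $y^3/(nq)^2=o(p^2)=o(1)$. Exponentiating $L$ then yields the desired asymptotic with multiplicative error $1+o(1)$.

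I expect the entire argument to be essentially routine bookkeeping: the only substantive point is the matching of the scale $o(\sigma^{4/3})$ in the hypothesis to the cubic Taylor remainder. Since Proposition~\ref{Prop_Bin} is a well-known textbook fact, in practice I would simply invoke \cite[Chapter~1]{BB} rather than reproduce the derivation in full.
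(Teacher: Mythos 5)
Your proposal is correct. The paper itself gives no proof of Proposition~\ref{Prop_Bin} --- it simply cites \cite[Chapter~1]{BB} --- and your direct Stirling-plus-Taylor computation is exactly the standard textbook argument behind that citation: the linear terms cancel, the quadratic terms produce $-y^2/(2npq)$, and you correctly identify and resolve the only delicate point, namely that $|y|=o((npq)^{2/3})$ makes the cubic remainder $y^3/(np)^2+y^3/(nq)^2=o(q^2)+o(p^2)=o(1)$ uniformly in $p$, while also guaranteeing $x,n-x\to\infty$ so that the Stirling errors and the prefactor $\sqrt{x(n-x)/n}\sim\sqrt{npq}$ behave as claimed.
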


Furthermore, we make use of the following theorem,
which summarizes results from~\cite[\Sec~6]{CMS04} on the component structure of $\hnp$.
\begin{theorem}\label{Thm_global}
Let $c=c(n)$ be a sequence of non-negative reals and let
$p=c\bink{n-1}{d-1}^{-1}$ and $m=\bink{n}dp=cn/d$.
Then for both $H=\hnp$ and $H=\hnm$ the following holds.
\begin{enumerate}
\item[(i)] For any $c_0<(d-1)^{-1}$ there is a number $n_0$ such that
		for all $n>n_0$ for which $c=c(n)\leq c_0$ we have
		$$\pr\brk{\order(H)\leq300(d-1)^2(1-(d-1)c_0)^{-2}\ln n}\geq1-n^{-100}.$$
\item[(ii)] For any $c_0>(d-1)^{-1}$ there are numbers $n_0>0$, $0<c_0'<(d-1)^{-1}$ such that
		for all $n>n_0$ for which $c_0\leq c=c(n)<\ln n/\ln\ln n$ the following holds.
	The transcendental equation~(\ref{eqCOMV}) has a unique solution $0<\rho=\rho(n)<1$,
	which satisfies
		\begin{eqnarray*}
		\rho^{d-1}c<c_0'.
		\end{eqnarray*}
	Furthermore, with probability $\geq1-n^{-100}$ there exists precisely one
	component of order $(1-\rho)n+o(n)$ in $H$, while all other components have order $\leq\ln^2 n$.
	In addition,
		$$\Erw\brk{\order(H)}=(1-\rho)n+o(\sqrt n).$$
\end{enumerate}
\end{theorem} 

Finally, we need the following local limit theorem for $\order(\hnp)$ from~\cite{BCK06}.

\begin{theorem}\label{Thm_Nlocal}
Let $d\geq2$ be a fixed integer.
For any two compact intervals $\III\subset\RR$, $\JJJ\subset((d-1)^{-1},\infty)$,
and for any $\delta>0$ there exist $n_0>0$ and $C_0>0$ such that the following holds.
Let $p=p(n)$ be  a sequence such that $c=c(n)=\bink{n-1}{d-1}p\in\JJJ$ for all $n$.
Then for all $n\geq n_0$ the following two statements are true.
\begin{enumerate}
\item[(i)] We have $\pr\brk{\order(\hnp)=\nu}\leq C_0/\sqrt{n}$ for all $\nu$.
\item[(ii)] Let $0<\rho=\rho(n)<1$ be the unique solution to (\ref{eqCOMV}), and let $\sigma_\order$ be as in~(\ref{eq:defsigmaN}).
	If $\nu$ is an integer such that $\sigma_\order^{-1}(\nu-(1-\rho)n)\in\III$, then
		\begin{eqnarray*}
		\frac{1-\delta}{\sqrt{2\pi}\sigma_\order}\exp\brk{-\frac{(\nu-(1-\rho)n)^2}{2\sigma_\order^2}}&\leq&
			\pr\brk{\order(\hnp)=\nu}\\
			&\leq&\frac{1+\delta}{\sqrt{2\pi}\sigma_\order}\exp\brk{-\frac{(\nu-(1-\rho)n)^2}{2\sigma_\order^2}}.
		\end{eqnarray*}
\end{enumerate}
\end{theorem}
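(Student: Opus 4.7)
The plan is to decompose $\order(\hnp)$ into a sum of weakly dependent indicators whose dependency structure is controlled by the qualitative information in \Thm~\ref{Thm_global}, to derive a central limit theorem via Stein's method for locally dependent sums, and finally to upgrade this to a local limit theorem through a Fourier (characteristic function) argument.

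First, I would condition on the event $\EE$ furnished by \Thm~\ref{Thm_global}(2) that $\hnp$ contains a unique giant component and that all other components have order at most $\ln^2 n$. Since $\pr\brk{\EE}\geq 1-n^{-100}$, working on $\EE$ suffices for any $O(n^{-1/2})$-scale estimate. On $\EE$ we have
\[
\order(\hnp) = n - \sum_{v=1}^{n} X_v,
\]
where $X_v$ is the indicator that $v$ does not lie in the giant component. The structural observation driving everything is that $X_v$ is essentially determined by the BFS-exploration from $v$ truncated at depth $L = C\ln n$: if the truncation survives then $v$ is in the giant. Consequently $X_v$ and $X_w$ are independent whenever the depth-$L$ neighbourhoods of $v$ and $w$ are disjoint, which fails only for $n^{1+o(1)}$ pairs.

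Next I would compare this truncated exploration with a Galton--Watson branching process having Poisson offspring of mean $c(d-1)\rho^{d-1}$; its extinction probability equals $\rho$, giving $\Erw\brk{X_v}=\rho+o(1)$, and an analogous two-point coupling produces $\Var\bc{\sum_v X_v}=\sigma_\order^2(1+o(1))$, with $\sigma_\order^2$ as in~(\ref{eq:defsigmaN}). Combined with Stein's method applied to the local-dependency graph above, this already yields a central limit theorem for $\sigma_\order^{-1}(\order(\hnp)-(1-\rho)n)$.

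The genuinely delicate part is upgrading this CLT to the pointwise local limit theorem, and this is where I expect the main obstacle to lie. For this I plan Fourier inversion
\[
\pr\brk{\order(\hnp)=\nu} = \frac{1}{2\pi}\int_{-\pi}^{\pi}\phi(t)\exp(-it\nu)\,dt,\qquad \phi(t)=\Erw\brk{\exp(it\,\order(\hnp))},
\]
split into a central band $|t|\leq n^{-1/2}\ln n$ and a tail band $n^{-1/2}\ln n\leq|t|\leq\pi$. On the central band a Taylor expansion of $\log\phi(t)$ together with the variance computation recovers the Gaussian density and contributes the main term of part~(2). The hard step is the tail estimate $|\phi(t)|=n^{-\omega(1)}$; I would establish it by a two-round exposure of the edges of $\hnp$, first revealing edges at rate $p-p'$ to pin down the coarse structure predicted by \Thm~\ref{Thm_global} and then sprinkling edges at rate $p'$ across $\Omega(n)$ disjoint candidate vertex pairs. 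Each sprinkled edge shifts $\order(\hnp)$ by an essentially independent $\Theta(1)$ amount depending on whether it glues a small component into the giant, exhibiting $\phi(t)$ as a product of many independent characteristic-function factors each bounded away from $1$ uniformly in $t$ on the tail band. Part~(1) of the theorem drops out of the same estimate without restricting $\nu$ to the Gaussian window.
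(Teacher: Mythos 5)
The paper does not prove \Thm~\ref{Thm_Nlocal} at all: it is imported verbatim from the companion paper~\cite{BCK06}, and the only thing the present paper says about its proof is that it ``relies on probabilistic reasoning (namely, branching processes and Stein's method for proving convergence to a Gaussian).'' Your outline is consistent with that one-line description, so at the level of strategy you are on the advertised track; there is simply no detailed argument in this paper to compare yours against.

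Assessed on its own merits, your sketch has one outright error and one genuine gap. The error: the branching process whose extinction probability is the $\rho$ of~(\ref{eqCOMV}) is \emph{not} a Galton--Watson process with Poisson offspring of mean $c(d-1)\rho^{d-1}$. For a Poisson$(\lambda)$ process the extinction probability solves $q=\exp(\lambda(q-1))$, which does not reduce to $\rho=\exp(c(\rho^{d-1}-1))$ unless $d=2$. The correct process is the one in which each individual begets $\Po(c)$ edges, each edge carrying $d-1$ new individuals; the quantity $c(d-1)\rho^{d-1}$ is the offspring mean of the process \emph{conditioned on extinction} and is why $(1-c(d-1)\rho^{d-1})^{-2}$ appears in $\sigma_\order^2$, but it is not the parameter of the exploration process itself. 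The gap: the CLT-to-LLT upgrade, which you correctly identify as the crux, is only a plan. In particular, exhibiting $\phi(t)$ on the tail band as a product of factors bounded away from $1$ requires that the increments contributed by the sprinkled edges be genuinely independent \emph{and} additive in $\order(\hnp)$; independence of the sprinkled edges does not give this for free, since two sprinkled edges can target the same small component, a single $d$-set can absorb several components at once, and the increment sizes are component orders rather than a fixed lattice. One has to pre-select $\Omega(n)$ pairwise disjoint candidate $d$-sets, each containing exactly one vertex that is isolated after the first round, and condition so that each such edge shifts $\order$ by exactly $0$ or $1$ independently; only then does each factor have modulus bounded away from $1$ uniformly for $t$ bounded away from $0\bmod 2\pi$. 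Without that construction spelled out, the tail estimate --- and hence both parts of the theorem --- is not yet proved. (Your derivation of part~(1) also silently uses Gaussian decay of $|\phi(t)|$ on the central band, not just $|\phi|\leq1$, since the band has width $n^{-1/2}\ln n$; this is fine but should be said.)
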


\section{The Local Limit Theorems: Proofs of \Thm s~\ref{Thm_Hnplocal} and~\ref{Thm_Hnmlocal}}\label{Sec_NMlocal}

Throughout this section, we let $\JJJ\subset((d-1)^{-1},\infty)$ and $\III\subset\RR^2$ denote compact sets.
Moreover, we let $\delta>0$ be arbitrarily small but fixed.
In addition, $0<p=p(n)<1$ is a sequence of edge probabilities such that $\bink{n-1}{d-1}p\in\JJJ$ for all $n$.
Then by \Thm~\ref{Thm_global} there exists a unique $0<\rho=\rho(n)<1$ such that $\rho=\exp(\bink{n-1}{d-1}p(\rho^{d-1}-1))$.
Moreover, we let $\sigma=\sqrt{\bink{n}dp(1-p)}$.

Furthermore, we consider two sequences $\nu=\nu(n)$ and $\bar\mu=\bar\mu(n)$ of integers.
We set 
$$x=x(n)=\nu-(1-\rho)n \quad \textit{and } \quad y=y(n)=\rho^d\bink{n}dp-\bar\mu.$$
We assume that $|x|,|y|\leq\sqrt{n}\ln n$.

\subsection{Outline}

In order to prove \Thm~\ref{Thm_Hnmlocal},
our starting point is \Thm~\ref{Thm_Nlocal}, i.e., the local limit theorem for $\order(\hnp)$;
we shall convert this \emph{univariate} limit theorem into a \emph{bivariate} one that covers both $\order$ and $\size$.
To this end, we observe that \Thm~\ref{Thm_Nlocal} easily yields a local limit theorem for the joint distribution of $\order(\hnp)$
and the number $\bar\size(\hnp)$ of edges \emph{outside} the largest component of $\hnp$.
Indeed, we shall prove that \emph{given} that $\order(\hnp)=\nu$, the random variable $\bar\size(\hnp)$ has
approximately a binomial distribution $\Bin(\bink{n-\nu}d,p)$. 
That is, 
	\begin{equation}\label{eqBivOut1}
	\pr\brk{\order(\hnp)=\nu\wedge\bar\size(\hnp)=\bar\mu}
		\sim\pr\brk{\order(\hnp)=\nu}\cdot\pr\brk{\Bin\bc{\bink{n-\nu}d,p}=\bar\mu}.
	\end{equation}
As \Thm~\ref{Thm_Nlocal} and \Prop~\ref{Prop_Bin} yield explicit formulas for the two factors on the r.h.s.,
we can thus infer an explicit formula for  $\pr\brk{\order(\hnp)=\nu\wedge\bar\size(\hnp)=\bar\mu}$. 
However, this does \emph{not} yield a result on the joint distribution of
$\order(\hnp)$ and $\size(\hnp)$.
For the random variables $\size(\hnp)$ and $\bar\size(\hnp)$ are not directly related, because
the \emph{total} number of edges in $\hnp$ is a random variable.

Therefore, to derive the joint distribution of $\order(\hnp)$ and $\size(\hnp)$, we make a detour to the $\hnm$ model,
in which the total number of edges is fixed (namely, $m$).
Hence, in $\hnm$ the step from $\size$ to $\bar\size$ is easy (because $\bar\size(\hnm)=m-\size(\hnm)$).
Moreover, $\hnp$ and $\hnm$ are related as follows:
given that the total number of edges in $\hnp$ equals $m$, $\hnp$ is distributed as $\hnm$.
Consequently,
	\begin{eqnarray}\nonumber
	\pr\brk{\order(\hnp)=\nu\wedge\bar\size(\hnp)=\bar\mu}&=&\\
		&\hspace{-8cm}=&\hspace{-4cm}\;
			\sum_{m=0}^{\bink{n}d}\pr\brk{\Bin\bc{\bink{n}d,p}=m}\cdot\pr\brk{\order(\hnm)=\nu\wedge\bar\size(\hnm)=\bar\mu}.
		\label{eqBivOut2}
	\end{eqnarray}
	
As a next step, we would like to ``solve'' (\ref{eqBivOut2}) for $\pr\brk{\order(\hnm)=\nu\wedge\bar\size(\hnm)=\bar\mu}$.
To this end, recall that~(\ref{eqBivOut1}) yields an explicit expression for the l.h.s.\ of~(\ref{eqBivOut2}).
Moreover, \Prop~\ref{Prop_Bin} provides an explicit formula for the second factor on the r.h.s.\ of~(\ref{eqBivOut2}).
Now, the crucial observation is that the terms $\pr\brk{\order(\hnm)=\nu\wedge\bar\size(\hnm)=\bar\mu}$
we are after are \emph{independent of $p$}, while~(\ref{eqBivOut2}) is true \emph{for all $p$}.

To exploit this observation, let 
$$p_z=p+z\sigma\bink{n}d^{-1}\quad \textit{and} \quad m_z=\lceil\bink{n}dp_z\rceil=\lceil\bink{n}dp+z\sigma\rceil,$$ and set $z^*=\ln^2n$.
Moreover, consider the two functions
	\begin{eqnarray*}
	f(z)=f_{n,\nu,\mu}(z)&=&\left\{\begin{array}{cl}
		n\pr\brk{\order(\hnpz)=\nu\wedge\bar\size(\hnpz)=\bar\mu}&\mbox{ if }z\in\brk{-z^*,z^*}\\
		0&\mbox{ if }z\in\RR\setminus\brk{-z^*,z^*},
	\end{array}\right.\\
	g(z)=g_{n,\nu,\mu}(z)&=&\left\{\begin{array}{cl}n\pr\brk{\order(\hnmz)=\nu\wedge\bar\size(\hnmz)=\bar\mu}&\mbox{ if }z\in\brk{-z^*,z^*}\\
		0&\mbox{ if }z\in\RR\setminus\brk{-z^*,z^*}.\end{array}\right.
	\end{eqnarray*}
Then computing the coefficients $\pr\brk{\order(\hnm)=\nu\wedge\bar\size(\hnm)=\bar\mu}$ is the same as computing the function $g$ explicitly.
To this end, 
we are going to show that (\ref{eqBivOut2}) can be restated as $\|f-g*\phi\|_2=o(1)$. 
Further, this relation in combination with some Fourier analysis will yield a formula for $g(z)$.
Although $f(z)$ and $g(z)$ depend on $n$ and on $\nu=\nu(n)$ and $\mu=\mu(n)$,
in the sequel we will omit these indices to ease up the notation, while keeping in mind that actually $f(z)$ and $g(z)$
represent sequences of functions.

To see that~(\ref{eqBivOut2}) implies $\|f-g*\phi\|_2=o(1)$, we need to analyze some properties of the functions $f$ and $g$.
Using \Thm~\ref{Thm_Nlocal} and \Prop~\ref{Prop_Bin}, we can estimate $f$ as follows.

\begin{lemma}\label{Lemma_fexplicit}
There exists a number $\gamma_0>0$ 
such that
for each $\gamma>\gamma_0$ there exists $n_0>0$ so that for all $n\geq n_0$ the following holds.
\begin{enumerate}
\item We have $f(z)\leq\gamma_0$ for all $z\in\RR$, and $\|f\|_1,\|f\|_2\leq\gamma_0$.
\item Suppose that $n^{-\frac12}\bink{x}y\in\III$. Let
	\begin{eqnarray}\label{eqlambda}
	\lambda&=&\frac{d\sigma(\rho^d-\rho)}{\sigma_\order(1-c(d-1)\rho^{d-1})}\mbox{ and}\\
	F(z)&=&\frac{n}{2\pi\rho^{d/2}\sigma\sigma_\order}\exp\brk{-\frac12\bc{(x\sigma_\order^{-1}-z\lambda)^2
			+\rho^d(y\rho^{-d}\sigma^{-1}-c\rho^{-1}\sigma^{-1}x+z)^2}}.	\nonumber
	\end{eqnarray}
	Then $\abs{f(z)-F(z)}\leq\gamma^{-2}$ for all $z\in\brk{-\gamma,\gamma}$.
	If $|z|>\gamma_0$, then $|f(z)|\leq\exp(-z^2/\gamma_0)+O(n^{-90})$.
\end{enumerate}
\end{lemma}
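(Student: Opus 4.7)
The plan is to derive \Lem~\ref{Lemma_fexplicit} from the factorization
\begin{equation}\label{eqFactorPlan}
\pr\brk{\order(\hnpz)=\nu\wedge\bar\size(\hnpz)=\bar\mu}
\sim\pr\brk{\order(\hnpz)=\nu}\cdot\pr\brk{\Bin\bc{\bink{n-\nu}d,p_z}=\bar\mu},
\end{equation}
and then to plug in \Thm~\ref{Thm_Nlocal} and \Prop~\ref{Prop_Bin}.

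First I would prove~\eqref{eqFactorPlan} by conditioning on the vertex set $S$ of the largest component; by symmetry only $|S|=\nu$ matters. Given $S$, the $\bink{n-\nu}d$ potential edges lying entirely in $V\setminus S$ are independent $\mathrm{Bernoulli}(p_z)$ variables, independent of the edges inside or crossing $S$. The event that $S$ is precisely the giant decomposes as (i) the subhypergraph on $S$ is connected, (ii) no edge crosses $S$ and $V\setminus S$, and (iii) the components of the induced subhypergraph $H_d(V\setminus S,p_z)$ all have order at most $\nu$. Conditions (i) and (ii) concern only edges touching $S$ and are therefore independent of the edges inside $V\setminus S$. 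For (iii), \Thm~\ref{Thm_global} applied to $H_d(V\setminus S,p_z)$, whose average degree $c_z(1-\nu/n)^{d-1}$ is bounded away from $(d-1)^{-1}$ from below by~\eqref{eqrhoupper}, shows that (iii) fails with probability $O(n^{-90})$; since the local probabilities we are after are of order $n^{-1}$, this makes (iii) asymptotically redundant and yields~\eqref{eqFactorPlan} with a relative error $o(1)$.

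Next I would compute the two factors on the right-hand side explicitly. Implicit differentiation of~\eqref{eqCOMV} gives $\frac{d\rho}{dc}=\frac{\rho(\rho^{d-1}-1)}{1-c(d-1)\rho^{d-1}}$; since $c_z-c=dz\sigma/n+O(n^{-1})$, Taylor expansion produces $\nu-(1-\rho_z)n=x+z\sigma_\order\lambda+o(1)$ with $\lambda$ as in~\eqref{eqlambda}, while $\sigma_\order^{(z)}=\sigma_\order(1+o(1))$. Feeding this into \Thm~\ref{Thm_Nlocal} gives the first Gaussian factor of $F(z)$. For the binomial factor, expand $\bink{n-\nu}dp_z=\rho^d\bink{n}dp-c\rho^{d-1}x+\rho^d z\sigma+o(1)$ and note that the variance is asymptotic to $\rho^d\sigma^2$; then \Prop~\ref{Prop_Bin} produces the second Gaussian factor. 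Multiplying the two and multiplying by $n$ yields $F(z)$ with relative error $o(1)$, uniform over $(x,y)$ with $n^{-\frac12}\bink{x}y\in\III$ and $|z|\leq\gamma$; this error can be made at most $\gamma^{-2}$ by choosing $n_0$ large.

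For the uniform bound $f(z)\leq\gamma_0$ and the Gaussian tail for $|z|>\gamma_0$, I would combine the crude bound $\pr\brk{\order(\hnpz)=\nu}=O(n^{-1/2})$ from \Thm~\ref{Thm_Nlocal}(1) with the Chernoff estimate~\eqref{eqChernoff} for $\Bin(\bink{n-\nu}d,p_z)$: as $z$ varies, the binomial mean moves by $\Omega(z\sigma)$, so $\bar\mu$ sits at distance $\Omega(|z|\sigma)$ from the mean, producing a factor $\exp(-\Omega(z^2))$. The resulting pointwise bound, integrated over $|z|\leq\ln^2 n$ and combined with $f(z)=0$ outside this range, delivers the $\|f\|_1$ and $\|f\|_2$ bounds. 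I expect the main obstacle to be the clean justification of~\eqref{eqFactorPlan}: one must argue that conditioning on ``largest component $=S$'' does not bias the edge count in $V\setminus S$ at the scale $n^{-1}$ relevant to the local limit theorem. Here the $O(n^{-90})$ slack supplied by \Thm~\ref{Thm_global} is essential, since the probabilities being compared are only of order $n^{-1}$.
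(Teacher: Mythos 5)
Your proposal follows essentially the same route as the paper: it first establishes the factorization $f(z)\sim n\pr\brk{\order(\hnpz)=\nu}\pr\brk{\Bin(\bink{n-\nu}d,p_z)=\bar\mu}$ by conditioning on the candidate component and exploiting the subcriticality of the remaining hypergraph (the paper's \Lem~\ref{Lemma_rhoz} and \Lem~\ref{Lemma_fexplicitAux}), and then inserts \Thm~\ref{Thm_Nlocal} and \Prop~\ref{Prop_Bin} together with the Taylor expansion $\rho_z=\rho+\lambda\sigma_\order n^{-1}z+o(n^{-1/2})$ obtained by implicit differentiation of~(\ref{eqCOMV}). The only nitpick is that the Taylor-expansion errors you state as $o(1)$ need only (and can only) be $o(\sqrt n)$, which still suffices for the local limit theorems; otherwise the argument is correct.
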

We defer the proof of \Lem~\ref{Lemma_fexplicit} to \Sec~\ref{Sec_fexplicit}.
Note that \Lem~\ref{Lemma_fexplicit} provides an explicit expression $F(z)$ that approximates $f(z)$ well on compact sets,
and shows that $f(z)\rightarrow0$ rapidly as $z\rightarrow\infty$.
Indeed, $F(z)$ just reflects~(\ref{eqBivOut1}).

Furthermore, the following lemma, whose proof we defer to \Sec~\ref{Sec_gcont}, shows that $g$ enjoys a certain
``continuity'' property.

\begin{lemma}\label{Lemma_gcont}
For any $\alpha>0$ there are $\beta>0$ and $n_0>0$ so that
for all $n\geq n_0$ and $z,z'\in\brk{-z^*,z^*}$ such that $|z-z'|<\beta$ we have
$g(z')\leq(1+\alpha)g(z)+n^{-20}$.
\end{lemma}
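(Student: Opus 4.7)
The plan is to express $g(z')/g(z)$ as an explicit ratio of combinatorial quantities and control it. Without loss of generality $z'>z$; set $m=m_z$, $m'=m_{z'}$, $k=m'-m=O(\beta\sqrt n)$, and $\bar m=m-\bar\mu$. Conditioning on the vertex set $V^*$ of the lex-first largest component and decomposing the edges of $\hnmz$ into edges inside $V^*$ (forming a connected sub-hypergraph), edges outside (with lex-constrained component orders), and crossing edges (which must be absent), one obtains
\begin{equation*}
g(z)/n \;=\; \binom{n}{\nu}\,\frac{C_d(\nu,\bar m)\, T_d(n-\nu,\bar\mu;\nu)}{\binom{\binom{n}{d}}{m}},
\end{equation*}
where $C_d(\nu,s)$ is the number of lex-first connected hypergraphs on $[\nu]$ with $s$ edges and $T_d$ depends only on $\nu,\bar\mu$. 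As $T_d$ cancels,
\begin{equation*}
\frac{g(z')}{g(z)} \;=\; \frac{C_d(\nu,\bar m+k)}{C_d(\nu,\bar m)}\cdot\frac{\binom{\binom{n}{d}}{m}}{\binom{\binom{n}{d}}{m+k}}.
\end{equation*}

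To control $C_d(\nu,\bar m+k)/C_d(\nu,\bar m)$ I would couple $H\sim H_d(\nu,\bar m+k)$ to $\tilde H\sim H_d(\nu,\bar m)$ by deleting $k$ uniformly random edges. Writing $c(\nu,s)=C_d(\nu,s)/\binom{\binom{\nu}{d}}{s}$ for the conditional probability of being lex-first connected, \Thm~\ref{Thm_global} applied to a hypergraph at density $d\bar m/\nu$ (which is supercritical throughout the compact range of interest) together with standard estimates on the $2$-core should show that random removal of $O(\beta\sqrt n)$ edges disconnects with probability $o(1)$. This gives $c(\nu,\bar m)\geq(1-o(1))c(\nu,\bar m+k)$, and therefore $C_d(\nu,\bar m+k)/C_d(\nu,\bar m)\leq(1+o(1))\binom{\binom{\nu}{d}}{\bar m+k}/\binom{\binom{\nu}{d}}{\bar m}$.

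Combining and applying Stirling, the product of binomial ratios
\begin{equation*}
\prod_{i=1}^{k}\frac{(\binom{\nu}{d}-\bar m-i+1)(m+i)}{(\bar m+i)(\binom{n}{d}-m-i+1)}
\end{equation*}
converges as $n\to\infty$ to $((1-\rho)^d/(1-\rho^d))^k$ using $\bar m/m\to 1-\rho^d$ and $\binom{\nu}{d}/\binom{n}{d}\to(1-\rho)^d$. Since $(1-\rho)^d/(1-\rho^d)=(1-\rho)^{d-1}/(1+\rho+\cdots+\rho^{d-1})<1$ for every $\rho\in(0,1)$ and $d\geq 2$, one obtains $g(z')/g(z)\leq 1+O(\beta^2)\leq 1+\alpha$ for $\beta$ chosen small enough in terms of $\alpha$. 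The symmetric case $z'<z$ is handled by the matching lower bound obtained from the double-counting inequality $C_d(\nu,s+1)/C_d(\nu,s)\geq(\binom{\nu}{d}-s)/(s+1)$, while the additive $n^{-20}$ absorbs contributions from the exceptional event (probability $\leq n^{-100}$) where the structural conclusions of \Thm~\ref{Thm_global} fail.

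The main obstacle is the coupling step: one needs a quantitative bound on the probability that removing $O(\beta\sqrt n)$ random edges from a random hypergraph conditioned on connectedness (itself an exponentially unlikely event) preserves connectivity. The difficulty is that conditioning on connectedness can in principle bias the distribution toward tree-like structures with many bridges; ruling this out requires a careful structural analysis of the $2$-core of random connected hypergraphs in the supercritical regime, which is the delicate technical ingredient of the proof.
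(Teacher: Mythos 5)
Your reduction of $g(z')/g(z)$ to the ratio $\frac{C_d(\nu,\bar m+k)}{C_d(\nu,\bar m)}\cdot\bink{\bink{n}d}{m}\bink{\bink{n}d}{m+k}^{-1}$ is exactly the paper's starting point (\Lem~\ref{Lemma_gexpression} and~(\ref{eqgcont3})), but the step you yourself flag as the main obstacle is where the argument fails, and it fails for a structural reason, not a technical one. In the density range at hand a uniformly random connected hypergraph with $\nu$ vertices and $\bar m=\Theta(n)$ edges (equivalently, the largest component of $\hnm$) contains $\Theta(n)$ vertices of degree one and hence $\Theta(n)$ edges whose removal disconnects it. Deleting a single uniformly random edge therefore disconnects with probability bounded away from $0$, and deleting $k=\Theta(\beta\sqrt n)$ random edges preserves connectivity only with probability $e^{-\Theta(k)}$. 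Consequently the conditional connectivity probability $c(\nu,s)=C_d(\nu,s)\bink{\bink{\nu}d}{s}^{-1}$ satisfies $c(\nu,\bar m+k)/c(\nu,\bar m)=e^{\Theta(k)}$, not $1+o(1)$, and your claimed bound $C_d(\nu,\bar m+k)/C_d(\nu,\bar m)\leq(1+o(1))\bink{\bink{\nu}d}{\bar m+k}\bink{\bink{\nu}d}{\bar m}^{-1}$ is off by exactly this exponentially large factor, roughly $((1-\rho^d)/(1-\rho)^d)^{k}$. Your own final display exposes the inconsistency: the product you compute tends to $((1-\rho)^d/(1-\rho^d))^{k}$, which for $k\to\infty$ goes to $0$ exponentially fast rather than to $1+O(\beta^2)$; were your bound correct it would give $g(z')\ll g(z)$ and, by symmetry, $g(z)\ll g(z')$.

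The correct normalization is $C_d(\nu,m_{z'}-\bar\mu)/C_d(\nu,m_{z}-\bar\mu)\leq(1+\alpha/2)p^{m_{z}-m_{z'}}$ (\Lem~\ref{Lemma_Cquotient}), and it is $p^{-k}$ --- not the ratio of the $\bink{\bink{\nu}d}{\cdot}$'s --- that cancels against $\bink{\bink{n}d}{m_{z'}}\bink{\bink{n}d}{m_{z}}^{-1}=p^{k}\exp(O(z-z')^2)$ (\Lem~\ref{Lemma_Binomquotient}). The paper obtains this without any structural analysis of conditioned connected hypergraphs: it compares $P=\pr\brk{\order=\nu\wedge\size=m_z-\bar\mu}$ and $P'=\pr\brk{\order=\nu\wedge\size=m_{z'}-\bar\mu}$ \emph{within the same} random hypergraph $H_d(n,p_{z'})$, exposing edges in three rounds so that the final round sprinkles a $\Bin\bc{\bink{|G_1|}d-\size(H_1),q_2}$ number of additional edges inside the already-formed giant component; the local limit theorem for the binomial (\Prop~\ref{Prop_Bin}) then gives $P'\leq(1+\alpha/3)P+n^{-80}$, because the two target values differ by only $|z-z'|\sigma$ while this binomial has variance $\Omega(\sigma^2)$. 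To salvage your outline you would need to replace the deletion coupling by a sprinkling argument of this kind; the deletion coupling itself cannot be repaired, because the statement it is meant to prove is false.
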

Further, in \Sec~\ref{Sec_fg} we shall combine \Lem s~\ref{Lemma_fexplicit} and~\ref{Lemma_gcont} to restate~(\ref{eqBivOut2}) as follows.
\begin{lemma}\label{Lemma_fg}
We have $f(z)=(1+o(1))(g*\phi(z))+O(n^{-18})$ for all $z\in\RR$.
\end{lemma}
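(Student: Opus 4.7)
The plan is to start from the elementary identity that, conditional on $|E(\hnpz)|=m$, the hypergraph $\hnpz$ is distributed as $\hnm$, so that
$$f(z)\;=\;\sum_m\pr\brk{\Bin(N,p_z)=m}\cdot h(m),\qquad h(m):=n\pr\brk{\order(\hnm)=\nu,\;\bar\size(\hnm)=\bar\mu},$$
where $N=\bink{n}{d}$. Parameterising $m$ by $z_m:=(m-Np)/\sigma$, one has $h(m)=g(z_m)$ for $|z_m|\le z^*$ (up to a harmless $O(1/\sigma)$ offset from the ceiling in $m_{z}$), while $h(m)\le n$ in general. The goal is to show that the contribution from $|z_m|\le z^*$ assembles into $(1+o(1))(g*\phi)(z)$, and that the contribution from $|z_m|>z^*$ is at most $O(n^{-18})$.

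For the main contribution I will apply \Prop~\ref{Prop_Bin} to the binomial weight. Since $\sigma_z^2:=Np_z(1-p_z)=(1+o(1))\sigma^2$ uniformly for $|z|\le z^*$, and $(m-Np_z)/\sigma=z_m-z$, the proposition yields
$$\pr\brk{\Bin(N,p_z)=m}\;=\;(1+o(1))\,\sigma^{-1}\phi(z_m-z)$$
uniformly for $|z_m-z|\le\sigma^{1/4}$. The bulk of $f(z)$ therefore equals $(1+o(1))\sum_{|z_m|\le z^*}\sigma^{-1}\phi(z_m-z)g(z_m)$, which is a Riemann sum with mesh $1/\sigma$ for $(g*\phi)(z)=\int\phi(z'-z)g(z')\,dz'$. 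To pass from the sum to the integral I will invoke \Lem~\ref{Lemma_gcont}: fix $\alpha>0$ arbitrarily small, choose the corresponding $\beta>0$, and partition $[-z^*,z^*]$ into subintervals of length $\beta$; on each such subinterval $g$ varies by at most the multiplicative factor $1+\alpha$ modulo an additive $n^{-20}$, so the Riemann sum equals the integral up to a factor $1+\alpha$ and a totally negligible additive error (of order $n^{-20}\cdot z^*/\beta\le n^{-19}$). Sending $\alpha\to 0$ through a diagonal choice produces the announced $(1+o(1))(g*\phi)(z)$.

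The hard part will be the tail, i.e.\ the terms with $|z_m|>z^*$, together with showing $(g*\phi)(z)=O(n^{-18})$ when $|z|>z^*$ (where $f(z)=0$ by definition). I plan to split the tail at $|z_m-z|\ge z^*/2$. In the first piece, $|m-Np_z|\ge z^*\sigma/2$, so the Chernoff bound~\eqref{eqChernoff} gives $\sum_{|m-Np_z|\ge z^*\sigma/2}\pr\brk{\Bin(N,p_z)=m}\le 2\exp(-\Omega(\ln^4 n))$; combined with the crude bound $h(m)\le n$ this contributes at most $n^{-\omega(1)}$. In the second piece, $|z_m-z|<z^*/2$ with $|z_m|>z^*$ forces $|z|>z^*/2=\Omega(\ln^2 n)$; there \Lem~\ref{Lemma_fexplicit}(2) yields $f(z)\le\exp(-\Omega(\ln^4 n))+O(n^{-90})=O(n^{-18})$, and a parallel Gaussian tail argument for $g$ (obtained by feeding \Thm~\ref{Thm_global} and the universal bound \Thm~\ref{Thm_Nlocal}(1), which force $\nu$ to lie $\Omega(\ln^2 n)$ standard deviations from the mean of $\order(H_d(n,m_{z'}))$) yields $(g*\phi)(z)=O(n^{-18})$ too, so both sides of the claimed identity are negligible in this regime and the lemma holds trivially there. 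The most delicate bookkeeping will occur at the boundary $|z|\sim z^*$, where one has to keep the multiplicative $(1+o(1))$ error and the additive $O(n^{-18})$ tolerance aligned; this is where the support truncation of $f$ and $g$ at $\pm z^*$ is felt, and \Lem~\ref{Lemma_fexplicit}(2) and \Lem~\ref{Lemma_gcont} are used in tandem to absorb the mismatch.
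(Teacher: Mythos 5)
Your argument follows the paper's proof essentially verbatim: the same mixture identity over $m$, a Chernoff cut-off for the far binomial tail, \Prop~\ref{Prop_Bin} for the bulk binomial weights, and \Lem~\ref{Lemma_gcont} to pass from the Riemann sum to the convolution integral, so the core of the proof is correct. The only soft spot is your claim that $(g*\phi)(z)=O(n^{-18})$ for $|z|>z^*/2$ via \Thm~\ref{Thm_global} and \Thm~\ref{Thm_Nlocal}(1) --- neither result gives a point-probability bound for $\order(H_d(n,m_{z'}))$ that decays with the deviation, so as cited this step does not follow --- but this concerns only the regime the paper itself dispatches in one line, and it can be repaired by combining the single-term lower bound $f(\zeta)=\Omega(g(\zeta)/\sigma)$ with the decay of $f$ from \Lem~\ref{Lemma_fexplicit}(2).
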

Since $f$ is bounded and both $f$ and $g$ vanish outside of the interval $\brk{-z^*,z^*}$, \Lem~\ref{Lemma_fg}
entails that $\|f-g*\phi\|_2=o(1)$.
In addition, 
we infer the following bound on $g$.

\begin{corollary}\label{Lemma_FourierAux1}
There is a number $0<K=O(1)$ such that $g(z)\leq Kf(z)+O(n^{-18})$ for all $z\in\brk{-z^*,z^*}$.
Hence, $\|g\|_1,\|g\|_2=O(1)$.
\end{corollary}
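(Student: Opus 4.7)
The plan is to deduce the pointwise bound by inverting the convolution relation in Lemma~\ref{Lemma_fg} using the ``continuity'' of $g$ provided by Lemma~\ref{Lemma_gcont}, and then to read off the norm bounds by integration.

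First I would observe that Lemma~\ref{Lemma_gcont} is symmetric under swapping the roles of $z$ and $z'$: given $\alpha>0$ and the corresponding $\beta>0$, for any two points $z,z'\in[-z^*,z^*]$ with $|z-z'|<\beta$ one has both $g(z')\leq(1+\alpha)g(z)+n^{-20}$ and $g(z)\leq(1+\alpha)g(z')+n^{-20}$, the latter rearranging to the lower bound $g(z')\geq g(z)/(1+\alpha)-n^{-20}$.

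Next I would estimate $(g*\phi)(z)=\int g(z-t)\phi(t)\,dt$ from below for each fixed $z\in[-z^*,z^*]$. Since $z^*=\ln^2n\gg\beta$, at least one of the half-intervals $[0,\beta]$ or $[-\beta,0]$ yields an interval $I$ of length $\beta$ such that $z-t\in[-z^*,z^*]$ for every $t\in I$; applying the lower bound from the previous step on $I$ gives
\[
(g*\phi)(z)\;\geq\;\int_I g(z-t)\phi(t)\,dt\;\geq\;c_\beta'\Bigl(\frac{g(z)}{1+\alpha}-n^{-20}\Bigr),
\]
where $c_\beta':=\int_0^\beta\phi(t)\,dt>0$ depends only on $\beta$. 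Rearranging yields $g(z)\leq\frac{1+\alpha}{c_\beta'}(g*\phi)(z)+n^{-20}$. Inverting Lemma~\ref{Lemma_fg} (which says $f(z)=(1+o(1))(g*\phi)(z)+O(n^{-18})$) gives $(g*\phi)(z)\leq(1+o(1))f(z)+O(n^{-18})$, and combining these estimates produces $g(z)\leq Kf(z)+O(n^{-18})$ with $K:=2(1+\alpha)/c_\beta'=O(1)$ for all $n$ large enough.

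For the norm bounds I would simply integrate the pointwise inequality over the support $[-z^*,z^*]$: since Lemma~\ref{Lemma_fexplicit} gives $\|f\|_1,\|f\|_2=O(1)$ and $z^*=\ln^2n$, one obtains $\|g\|_1\leq K\|f\|_1+2z^*\cdot O(n^{-18})=O(1)$, and using $(a+b)^2\leq2(a^2+b^2)$ one gets $\|g\|_2^2\leq 2K^2\|f\|_2^2+4z^*\cdot O(n^{-36})=O(1)$. The only subtlety I anticipate is the endpoint issue -- a full two-sided $\beta$-neighborhood of $z$ need not stay inside $[-z^*,z^*]$ when $z$ is very close to $\pm z^*$ -- but this is handled cleanly by the one-sided interval $I$ above, which still carries a positive Gaussian mass $c_\beta'$.
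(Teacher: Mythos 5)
Your proposal is correct and follows essentially the same route as the paper: both arguments invert the relation $f=(1+o(1))(g*\phi)+O(n^{-18})$ of \Lem~\ref{Lemma_fg} by using the two-sided consequence of \Lem~\ref{Lemma_gcont} to lower-bound $g$ on a $\beta$-neighbourhood of $z$, whence $(g*\phi)(z)\geq\mathrm{const}\cdot\beta\,(g(z)-n^{-20})$, and then integrate the resulting pointwise bound against the $L_1$- and $L_2$-bounds on $f$ from \Lem~\ref{Lemma_fexplicit}. Your explicit treatment of the endpoint issue via a one-sided interval is exactly what the paper's restriction of the integral to $\brk{-z^*,z^*}\cap\brk{z-\beta,z+\beta}$ accomplishes implicitly.
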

\begin{proof}
Let $z\in\brk{-z^*,z^*}$.
By \Lem~\ref{Lemma_gcont} there is a number $\beta>0$ such that
$g(z')\geq\frac12g(z)-n^{-20}$ for all $z'\in\brk{-z^*,z^*}$ that satisfy $|z-z'|\leq\beta$.
Therefore, \Lem~\ref{Lemma_fg} entails that
	\begin{eqnarray*}
	f(z)&=&(1+o(1))\int g(z+\zeta)\phi(\zeta)d\zeta+O(n^{-18})\\
		&\geq&\frac{g(z)}{2+o(1)}\int_{\brk{-z^*,z^*}\cap\brk{z-\beta,z+\beta}}\phi(\zeta)d\zeta+O(n^{-18})
		\geq\frac{\beta g(z)}{10}+O(n^{-18}),
	\end{eqnarray*}
whence the desired estimate follows.
\qed\end{proof}

To obtain an explicit formula for $g$, we exhibit another function $h$ such that $\|f-h*\phi\|_2=o(1)$.

\begin{lemma}\label{Lemma_fh}
Suppose that $n^{-\frac12}\bink{x}y\in\III$, let $\lambda$ be as in~(\ref{eqlambda}), and define
\begin{eqnarray}\nonumber
\chi &=& 	\lambda^2 + \rho^d,\ 
\kappa = -\brk{\frac{\lambda}{\sigma_\cN} + \frac{c\rho^{d-1}}{\sigma}}x + \frac{y}{\sigma},\nonumber\ 
\theta = \frac{x^2}{\sigma_\cN^2} + \frac{(c\rho^{d-1} x - y)^2}{\rho^d\sigma^2},\mbox{ and}\nonumber\\
h(z)&=& \frac{n}{2\pi\rho^{d/2}\sqrt{1-\chi}\sigma_\cN\sigma}\exp\brk{
-\frac{\chi\theta-\kappa^2}{2\chi}-\frac{\left(\chi z+\kappa\right)^2}{2(\chi-\chi^2)}},
\label{eq:defhnmz}
\end{eqnarray}
Then $\|f-h*\phi\|_2=o(1)$.
\end{lemma}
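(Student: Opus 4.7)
The plan is to verify algebraically that $h\ast\phi \equiv F$ pointwise, where $F(z)$ is the explicit Gaussian approximation of $f(z)$ supplied by \Lem~\ref{Lemma_fexplicit}, and then to deduce $\|f-h\ast\phi\|_2=o(1)$ directly from $\|f-F\|_2=o(1)$. Everything reduces to Gaussian bookkeeping plus the two pointwise estimates already collected in \Lem~\ref{Lemma_fexplicit}.

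For the first step I would interpret $h$ as (up to an overall $z$-independent factor) the density of a one-dimensional Gaussian in $z$ of variance $(1-\chi)/\chi$ centred at $-\kappa/\chi$; this requires $0<\chi<1$, which follows from the bound~\eqref{eqrhoupper} on $c\rho^{d-1}$ together with the definition $\chi=\lambda^2+\rho^d$. Since $\phi$ is the standard Gaussian density, $h\ast\phi$ is then Gaussian with variance $(1-\chi)/\chi+1=\chi^{-1}$. I would then expand the exponent of $F$ as a quadratic in $z$ and collect coefficients: the $z^2$ coefficient comes out to $\chi/2$; the $z$ coefficient, after substituting the expression~(\ref{eqlambda}) for $\lambda$ and rearranging, equals exactly $-\kappa$; and the constant term equals $-\theta/2$. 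The prefactor $\sqrt{1-\chi}$ in the denominator of $h$ is precisely the compensating factor that makes the two normalisations agree after convolution, so $h\ast\phi(z)=F(z)$ identically in $z$.

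For the second step, choose $\gamma=\gamma(n)\to\infty$ with, say, $\gamma=\ln\ln n$, and split $\|f-F\|_2^2$ at $|z|=\gamma$. On $[-\gamma,\gamma]$, \Lem~\ref{Lemma_fexplicit}(2) gives $|f-F|^2\leq\gamma^{-4}$, which contributes $O(\gamma^{-3})=o(1)$. For $|z|>\gamma$, the same lemma yields $|f(z)|\leq\exp(-z^2/\gamma_0)+O(n^{-90})$, whose square has total tail mass $o(1)$ once $\gamma>\gamma_0$; meanwhile $F(z)$ is a Gaussian of bounded variance $\chi^{-1}$ centred in a bounded region (since $n^{-\frac12}\bink{x}y\in\III$ forces $|\kappa|=O(1)$), so its tail contribution beyond $|z|=\gamma$ is also $o(1)$. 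Combining, $\|f-F\|_2=o(1)$, and the triangle inequality $\|f-h\ast\phi\|_2\leq\|f-F\|_2$ finishes the proof.

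The one delicate piece is the algebraic verification that the specific constants $\chi$, $\kappa$, $\theta$ appearing in the definition of $h$ match what the Gaussian deconvolution of $F$ produces; this is purely mechanical but mixes the two scales $\sigma_\order$ and $\sigma$, and the cross term $-c\rho^{d-1}x/\sigma$ that enters $\kappa$ has to be tracked carefully. Once this identity is in place, the analytic part of the argument is essentially immediate from \Lem~\ref{Lemma_fexplicit}.
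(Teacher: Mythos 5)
Your overall strategy is the same as the paper's: verify the exact identity $h*\phi=F$ by Gaussian bookkeeping (the coefficient matching you describe for $\chi$, $\kappa$, $\theta$ is correct) and then conclude from the approximation $|f-F|$ supplied by \Lem~\ref{Lemma_fexplicit}, handling the tails via the decay bound $|f(z)|\leq\exp(-z^2/\gamma_0)+O(n^{-90})$ and the fact that $f$ vanishes outside $\brk{-z^*,z^*}$. That part is fine.

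The gap is your treatment of $\chi<1$. You assert that it ``follows from the bound~\eqref{eqrhoupper} on $c\rho^{d-1}$ together with the definition $\chi=\lambda^2+\rho^d$,'' but it does not. Substituting $\sigma^2\sim cn/d$ and the formula for $\sigma_\order^2$ gives
$\lambda^2=\frac{dc\rho(1-\rho^{d-1})^2}{1-\rho+c(d-1)(\rho-\rho^{d-1})}$, and for $d=2$ the claim $\chi=\lambda^2+\rho^2<1$ reduces to $2c\rho(1-\rho)<1-\rho^2$, i.e.\ $2c\rho<1+\rho$. Since $\frac{1+\rho}{2\rho}<\frac1\rho$ for $0<\rho<1$, this is \emph{strictly stronger} than the bound $c\rho<1$ that \eqref{eqrhoupper} provides, and it becomes tight as $c\downarrow1$ (e.g.\ $c=1.2$ gives $\rho\approx0.686$, $2c\rho\approx1.65$ versus $1+\rho\approx1.69$). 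So the inequality is true but needs a genuine argument, not a one-line appeal to \eqref{eqrhoupper}. The paper sidesteps the explicit calculus entirely (\Lem~\ref{Lemma_chibound}): it observes that $\chi^{-1}$ is the variance of the normalized Gaussian $\|F\|_1^{-1}F$, that $\|F-g*\phi\|_1=o(1)$ by \Lem~\ref{Lemma_fg}, and hence $\chi^{-1}\sim\Var(\|g\|_1^{-1}g)+1>1$, the positivity of $\Var(\|g\|_1^{-1}g)$ coming from \Lem~\ref{Lemma_gcont}. You need either this indirect argument or an honest analytic proof of $\lambda^2<1-\rho^d$ using the transcendental equation \eqref{eqCOMV}; without one, the deconvolution step (the variance $(1-\chi)/\chi$ of $h$ and the factor $\sqrt{1-\chi}$ in its normalisation) is not even well defined.
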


The proof of \Lem~\ref{Lemma_fh} can be found in \Sec~\ref{Sec_fh}.
Thus, we have the two relations $\|f-g*\phi\|_2=o(1)$ and $\|f-h*\phi\|_2=o(1)$.
In \Sec~\ref{Sec_Fourier} we shall see that these bounds imply that actually $h$ approximates $g$ pointwise.

\begin{lemma}\label{Prop_Fourier}
For any $\alpha>0$ there is $n_0>0$ such that for all $n>n_0$, all $z\in\brk{-z^*/2,z^*/2}$,
and all $\nu,\bar\mu$ such that $n^{-\frac12}\bink{x}y\in\III$ we have
	$|g(z)-h(z)|<\alpha$.
\end{lemma}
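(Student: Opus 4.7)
\textbf{Proof plan for Lemma~\ref{Prop_Fourier}.}
The plan is Fourier analytic. Let $\phi(z)=(2\pi)^{-1/2}e^{-z^2/2}$ denote the standard normal density so that $\hat\phi(\xi)=e^{-\xi^2/2}$. First, I would combine the estimates already at our disposal: Lemma~\ref{Lemma_fg} gives $f-g*\phi = o(1)\cdot(g*\phi) + O(n^{-18})$ pointwise, and together with $\|g\|_2=O(1)$ from Corollary~\ref{Lemma_FourierAux1}, Young's inequality $\|g*\phi\|_2 \le \|g\|_2\|\phi\|_1=O(1)$, and the fact that both $f$ and $g*\phi$ live essentially on $[-z^*,z^*]$, this yields $\|f-g*\phi\|_2=o(1)$. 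Lemma~\ref{Lemma_fh} gives $\|f-h*\phi\|_2=o(1)$ outright. By the triangle inequality, writing $G=g-h$, we obtain $\|G*\phi\|_2=o(1)$.

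Applying Plancherel's theorem gives $\|\hat G\cdot\hat\phi\|_2=o(1)$. Since $\hat\phi(\xi)\ge e^{-T^2/2}$ on every compact interval $[-T,T]$, this implies
\begin{equation*}
\|\hat G\|_{L^2([-T,T])} \;\le\; e^{T^2/2}\,\|\hat G\cdot\hat\phi\|_2 \;=\; o(1)\qquad\text{as }n\to\infty,\ \text{for each fixed }T.
\end{equation*}
Thus $\hat G$ is small in $L^2$ on any compact frequency window.

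The next step is to upgrade this low-frequency $L^2$-smallness to a pointwise bound on $G$ on the interval $[-z^*/2,z^*/2]$. For this I would fix a smooth, real, even kernel $\kappa$ whose Fourier transform $\hat\kappa$ is supported in $[-T,T]$ and which is concentrated on a short scale $\eta=\eta(\alpha)\ll\beta$, where $\beta$ is the constant from Lemma~\ref{Lemma_gcont} corresponding to a small auxiliary parameter $\alpha'\ll\alpha$ (a de la Vall\'ee-Poussin type kernel works). Then by Cauchy-Schwarz and the support of $\hat\kappa$,
\begin{equation*}
\|G*\kappa\|_\infty \;\le\; \tfrac{1}{2\pi}\|\hat G\hat\kappa\|_1 \;\le\; \tfrac{\sqrt{2T}}{2\pi}\|\hat\kappa\|_\infty\|\hat G\|_{L^2([-T,T])} \;=\; o(1).
\end{equation*}
On the other hand, Lemma~\ref{Lemma_gcont} and the explicit Gaussian form of $h$ imply that $\|g*\kappa-g\|_\infty$ and $\|h*\kappa-h\|_\infty$ on $[-z^*/2,z^*/2]$ are each at most $\alpha/3$, provided $\kappa$ is concentrated on scale $\eta\ll\beta$ and $T$ is correspondingly large. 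Combining these bounds yields $|G(z)|\le|G*\kappa(z)|+\|G*\kappa-G\|_\infty<\alpha$ for all sufficiently large $n$ and $z\in[-z^*/2,z^*/2]$.

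\textbf{Main obstacle.} The delicate step is the last one: constructing $\kappa$ (equivalently, choosing $T$) so that simultaneously (i) $\hat\kappa$ has compact support (needed to turn the localized $L^2$ estimate on $\hat G$ into an $L^\infty$ estimate on $G*\kappa$), and (ii) $\kappa$ is concentrated on a scale smaller than the $\beta$ from Lemma~\ref{Lemma_gcont}, so that $g*\kappa\approx g$ despite $g$ being only a step function with merely multiplicative continuity. The quantifier order $\alpha\rightsquigarrow\beta\rightsquigarrow T\rightsquigarrow n_0$ must be handled carefully, and the fact that $g*\phi$ is bounded in $L^2$ uniformly in $n$ (Corollary~\ref{Lemma_FourierAux1}) is essential to ensure that the $e^{T^2/2}$ blowup in the $L^2$ estimate is eventually absorbed by the $o(1)$ factor.
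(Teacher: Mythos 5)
Your proposal is correct in outline and reaches the conclusion by a genuinely different route in the key technical step. The paper also starts from $\|(g-h)*\phi\|_2=o(1)$, but then mollifies with a \emph{narrow Gaussian} $\phi_{0,\tau^2}$ with $\tau=\omega^{-1/8}\to0$: since $\hat\phi_{0,\tau^2}$ is not compactly supported, it must control the high-frequency part of $\widehat{g-h}$ separately, which it does via Cauchy--Schwarz and the bound $\|(\hat g-\hat h)^2\|_2=\|(g-h)*(g-h)\|_2=O(1)$ (this is where $\|g\|_1,\|g\|_2=O(1)$ from \Cor~\ref{Lemma_FourierAux1} enters); it then concludes by contradiction, showing that a pointwise discrepancy $|g(z)-h(z)|>\alpha$ would force $\|(g-h)*\phi_{0,\tau^2}\|_2^2\geq0.5\alpha^2\beta$ on an interval of length $\beta$, against the $\exp(-\omega/5)$ upper bound. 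Your band-limited kernel $\kappa$ eliminates the high-frequency estimate entirely (only $\|\hat G\|_{L^2([-T,T])}$ is needed) and yields a direct $L^\infty$ bound on $G*\kappa$ rather than an $L^2$ contradiction; the price is that $\kappa$ cannot be both nonnegative and rapidly decaying, so you must verify $\|\kappa\|_1=O(1)$ and that the mass of $|\kappa|$ outside $[-\beta/2,\beta/2]$ is $o_\alpha(1)$ for $T\gg1/\beta$ (true for Fej\'er or de la Vall\'ee-Poussin kernels), and you still need \Cor~\ref{Lemma_FourierAux1} and \Lem~\ref{Lemma_fexplicit} to justify $G\in L^1\cap L^2$ and $\|g\|_\infty,\|h\|_\infty=O(1)$. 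Both arguments rest on the same de-mollification inputs (the multiplicative continuity of $g$ from \Lem~\ref{Lemma_gcont}, stated one-sidedly but symmetrizable, and the explicit continuity of $h$), and your quantifier order $\alpha\rightsquigarrow\beta\rightsquigarrow T\rightsquigarrow n_0$ with $T$ fixed before $n\to\infty$ is sound, so the $\eul^{T^2/2}$ loss is harmless.
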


In summary, by now we have obtained an explicit formula for $g(z)$ by rephrasing~(\ref{eqBivOut2}) in terms of $f$ and $g$ as $\|f-g*\phi\|_2=o(1)$.
Since \Thm~\ref{Thm_Nlocal} yields an explicit formula for $f$, we have been able to compute $g$. 
In particular, we have an asymptotic formula for $g(0)=\pr\brk{\order(H_d(n,m_0))=\nu\wedge\bar\size(H_d(n,m_0))=\bar\mu}$;
let us point out that this implies \Thm~\ref{Thm_Hnmlocal}.

\smallskip\noindent\emph{Proof of \Thm~\ref{Thm_Hnmlocal}.}
Suppose that $n^{-\frac12}\bink{x}y\in\III$.
Let $\mu=m_0-\bar\mu$.
Since $\size(H_d(n,m_0))=m_0-\bar\size(H_d(n,m_0))$, we have
	$g(0)=\pr\brk{\order(H_d(n,m_0))=\nu\wedge\size(H_d(n,m_0))=\mu}.$
Furthermore, $|h(0)-g(0)|<\alpha$ by Lemma~\ref{Prop_Fourier}.
Moreover, it is elementary though tedious to verify that
$h(0)=Q(\nu-(1-\rho)n,\mu-(1-\rho^d)m_0)$, where $Q$ is the function defined in \Thm~\ref{Thm_Hnmlocal}.
\qed

\medskip
Finally, to derive \Thm~\ref{Thm_Hnplocal} from \Thm~\ref{Thm_Hnmlocal}, we employ the relation
	\begin{eqnarray}\nonumber
	\pr\brk{\order(\hnp)=\nu\wedge\size(\hnp)=\mu}\\
		&\hspace{-8cm}=&\hspace{-4cm}\;
			\sum_{m=0}^{\bink{n}d}
				\pr\brk{\order(\hnm)=\nu\wedge\size(\hnm)=\mu}\cdot\pr\brk{\Bin\bc{\bink{n}d},p=m},
	\label{eqRueckschritt0}
	\end{eqnarray}
whose r.h.s.\ we know due to \Thm~\ref{Thm_Hnmlocal}.
We defer the details to \Sec~\ref{Sec_Hnplocal}.

\subsection{Proof of \Lem~\ref{Prop_Fourier}}\label{Sec_Fourier}

We normalize the Fourier transform as
	$\hat \varphi(\xi)=(2\pi)^{-\frac12}\int_{\RR}\varphi(\zeta)\exp(i\zeta\xi)d\zeta$, so that
the Plancherel theorem yields
	\begin{equation}\label{eqPlancherel}
	\| \varphi\|_2=\|\hat  \varphi\|_2,\qquad\mbox{provided that $ \varphi\in L_1(\RR)\cap L_2(\RR)$}.
	\end{equation}

Note that the proof of \Lem~\ref{Prop_Fourier} would be easy if it were true that $f=g*\phi$ and $f=h*\phi$.
For in this case we could just Fourier transform $f$ to obtain $\hat f=\hat g\hat\phi=\hat h\hat\phi$.
Then, dividing by $\hat\phi=\phi$ would yield $\hat g=\hat h$, and Fourier transforming once more we would get $g=h$.
However, since we do not have $f=g*\phi$ and $f=h*\phi$, but only $\|f-g*\phi\|_2,\|f-h*\phi\|_2=o(1)$, we have to work a little.

\Lem s~\ref{Lemma_fg} and~\ref{Lemma_fh} imply that there is a function $\omega=\omega(n)$ such that
$\lim_{n\rightarrow\infty}\omega(n)=\infty$ and
	$\|f-g*\phi\|_2,\|f-h*\phi\|_2<\frac12\exp(-\omega^2)$.
Thus,
	\begin{equation}\label{eqsmalltauI}
	\|(g-h)*\phi\|_2<\exp(-\omega^2)=o(1).
	\end{equation}
In order to compare $g$ and $h$, the crucial step is to establish that actually
$\|(g-h)*\phi_{0,\tau^2}\|_2=o(1)$ for ``small'' numbers $\tau<1$;
indeed, we are mainly interested in $\tau=o(1)$.
We point out that by \Lem~\ref{Lemma_fexplicit} and
\Cor~\ref{Lemma_FourierAux1} we can apply the Plancherel theorem~(\ref{eqPlancherel}) to both $f$ and $g$,
because $f,g\in L_1(\RR)\cap L_2(\RR)$.

\begin{lemma}\label{Lemma_smalltau}
Suppose that $\omega^{-1/8}\leq\tau\leq1$.
Then $\|(g-h)*\phi_{0,\tau^2}\|_2\leq\exp(-\omega/5)$.
\end{lemma}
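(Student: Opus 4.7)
The plan is to pass to the Fourier side, apply the a priori bound from~\eqref{eqsmalltauI}, and compensate for replacing the kernel $\phi$ by $\phi_{0,\tau^2}$ by splitting the frequency axis at a carefully chosen cutoff.

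First I would use the convolution theorem in the form $\widehat{\varphi*\psi}=\sqrt{2\pi}\,\hat\varphi\hat\psi$ together with $\hat\phi_{0,s^2}(\xi)=(2\pi)^{-1/2}\exp(-s^2\xi^2/2)$ to restate
\[
\|(g-h)*\phi\|_2^2=\int_{\RR}|\widehat{g-h}(\xi)|^2\exp(-\xi^2)\,d\xi,\qquad
\|(g-h)*\phi_{0,\tau^2}\|_2^2=\int_{\RR}|\widehat{g-h}(\xi)|^2\exp(-\tau^2\xi^2)\,d\xi.
\]
Then~\eqref{eqsmalltauI} becomes $\int|\widehat{g-h}(\xi)|^2\exp(-\xi^2)\,d\xi<\exp(-2\omega^2)$, while \Cor~\ref{Lemma_FourierAux1} supplies $\|g\|_2=O(1)$ and direct inspection of~\eqref{eq:defhnmz} yields $\|h\|_2=O(1)$, so that Plancherel provides $\|\widehat{g-h}\|_2=O(1)$.

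Next I would fix a cutoff $T>0$ and split the integral. On $\{|\xi|\leq T\}$ the identity $\exp(-\tau^2\xi^2)=\exp(-\xi^2)\exp((1-\tau^2)\xi^2)$ together with the hypothesis $\tau\leq 1$ gives $\exp(-\tau^2\xi^2)\leq\exp(T^2)\exp(-\xi^2)$, so that the a priori bound plugs in to yield
\[
\int_{|\xi|\leq T}|\widehat{g-h}(\xi)|^2\exp(-\tau^2\xi^2)\,d\xi\leq\exp(T^2-2\omega^2).
\]
On $\{|\xi|>T\}$ the crude estimate $\exp(-\tau^2\xi^2)\leq\exp(-\tau^2T^2)$ combined with $\|\widehat{g-h}\|_2=O(1)$ gives
\[
\int_{|\xi|>T}|\widehat{g-h}(\xi)|^2\exp(-\tau^2\xi^2)\,d\xi=O\bc{\exp(-\tau^2T^2)}.
\]

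To finish I would set $T=\omega^{5/8}$, so that $T^2=\omega^{5/4}=o(\omega^2)$ forces the low-frequency piece below $\exp(-\omega^2)$, while the hypothesis $\tau\geq\omega^{-1/8}$ yields $\tau^2T^2\geq\omega^{-1/4}\cdot\omega^{5/4}=\omega$, placing the high-frequency piece at $O(\exp(-\omega))$. Summing, taking a square root, and using $\omega(n)\to\infty$ then gives $\|(g-h)*\phi_{0,\tau^2}\|_2\leq\exp(-\omega/2)\leq\exp(-\omega/5)$ for all sufficiently large $n$. The only point that is not merely bookkeeping is the uniform bound $\|h\|_2=O(1)$ over the compact parameter range $n^{-1/2}\bink{x}{y}\in\III$: since $h$ is a Gaussian in $z$ whose covariance depends on $n$, one must verify that the prefactor in~\eqref{eq:defhnmz} is $\Theta(1)$ and that $1-\chi$ stays bounded away from $0$, which I expect to follow from $\sigma_\cN,\sigma=\Theta(\sqrt n)$ together with the continuity of the expressions defining $\chi,\kappa,\theta$ on the compact set $\JJJ$.
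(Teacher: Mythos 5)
Your argument is correct, and it follows the same overall strategy as the paper (pass to the Fourier side via Plancherel, split the frequency axis, pay a factor $\exp(\mathrm{cutoff}^2)$ at low frequencies against the a priori bound~(\ref{eqsmalltauI}), and use the decay of $\hat\phi_{0,\tau^2}$ at high frequencies). The difference lies in how the high-frequency tail is handled, and your version is genuinely simpler. The paper cuts at $|\xi|=\omega$ and, on the tail, applies Cauchy--Schwarz to pair $|\hat g-\hat h|^2$ with $|\hat\phi_{0,\tau^2}|^2$; this forces it to control $\|(\hat g-\hat h)^2\|_2=\|(g-h)*(g-h)\|_2$, which in turn requires the auxiliary bounds $\|f*f\|_2,\|f*h\|_2,\|h*h\|_2=O(1)$. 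By choosing the smaller cutoff $T=\omega^{5/8}$ you can afford the crude sup-norm estimate $\exp(-\tau^2\xi^2)\leq\exp(-\tau^2T^2)\leq\exp(-\omega)$ on the tail and only need $\|\widehat{g-h}\|_2=\|g-h\|_2=O(1)$, i.e.\ exactly the $L_2$ bounds already supplied by \Cor~\ref{Lemma_FourierAux1} and the explicit form of $h$; the low-frequency cost $\exp(T^2)=\exp(\omega^{5/4})$ is still negligible against $\exp(-2\omega^2)$. The one point you rightly flag --- that $\|h\|_2=O(1)$ uniformly over $n^{-1/2}\binom{x}{y}\in\III$, which amounts to the prefactor in~(\ref{eq:defhnmz}) being $O(1)$ and $1-\chi$ being bounded away from $0$ --- is also needed (implicitly) by the paper's own proof, and follows from \Lem~\ref{Lemma_chibound} together with $\sigma,\sigma_\order=\Theta(\sqrt n)$, so there is no gap.
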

\begin{proof}
Let $\xi=\hat\phi_{0,\tau^2}=\phi_{0,\tau^{-2}}$.
Then
	\begin{eqnarray}
	\|(g-h)*\phi_{0,\tau^2}\|_2^2
		&\stacksign{(\ref{eqPlancherel})}{=}&\|(\hat g-\hat h)\xi\|_2^2
		=\int_{-\omega}^{\omega}|(\hat g-\hat h)\xi|^2+\int_{\RR\setminus\brk{-\omega,\omega}}|(\hat g-\hat h)\xi|^2.
	\label{eqsmalltau1}
	\end{eqnarray}
Since $\hat\phi=\phi$, we obtain
	\begin{eqnarray}\nonumber
	\int_{-\omega}^{\omega}|(\hat g-\hat h)\xi|^2&\leq&
		\frac{\|\xi\|_{\infty}}{\inf_{-\omega\leq t\leq\omega}|\hat\phi(t)|^2}\int_{-\omega}^{\omega}|(\hat g-\hat h)\hat\phi|^2\\
			&\leq&\exp(\omega^2)\|(\hat g-\hat h)\hat\phi\|_2^2
			\;\stacksign{(\ref{eqPlancherel})}{=}\;\exp(\omega^2)\|(g-h)*\phi\|_2^2
			\;\stacksign{(\ref{eqsmalltauI})}{\leq}\;\exp(-\omega^2).
	\label{eqsmalltau2}
	\end{eqnarray}
In addition, by the Cauchy-Schwarz inequality
	\begin{eqnarray}
	\int_{\RR\setminus\brk{-\omega,\omega}}|(\hat g-\hat h)\xi|^2
		&\leq&\brk{\int_{\RR}|(\hat g-\hat h)^2|^2}^{\frac12}\cdot\brk{\int_{\RR\setminus\brk{-\omega,\omega}}|\xi|^4}^{\frac12}
	\label{eqsmalltau3}
	\end{eqnarray}
Furthermore, as $\tau^{-2}\leq\omega^\frac14$, we have
	\begin{equation}\label{eqsmalltau4}
	\int_{\RR\setminus\brk{-\omega,\omega}}|\xi|^4\leq\tau^{-2}\int_{\omega}^{\infty}\exp(-2\tau^2\zeta^2)d\zeta\leq\exp(-\omega).
	\end{equation}
Moreover, by \Cor~\ref{Lemma_FourierAux1}
	\begin{eqnarray}\nonumber
	\int_{\RR}|(\hat g-\hat h)^2|^2&=&\|(\hat g-\hat h)^2\|_2^2\;\stacksign{(\ref{eqPlancherel})}{=}\;\|(g-h)*(g-h)\|_2^2
		\leq\brk{\|g*g\|_2+2\|g*h\|_2+\|h*h\|_2}^2\\
		&\leq&\brk{K^2\|f*f\|_2+2K\|f*h\|_2+\|h*h\|_2}^2+o(1).
	\label{eqsmalltau5}
	\end{eqnarray}
Considering the bounds on $f$ and $h$ obtained in \Lem s~\ref{Lemma_fexplicit}
and~\ref{Lemma_fh}, we see that $\|f*f\|_2,\|f*h\|_2,\|h*h\|_2=O(1)$.
Therefore, (\ref{eqsmalltau3}), (\ref{eqsmalltau4}), and~(\ref{eqsmalltau5}) imply that
	\begin{equation}\label{eqsmalltau6}
	\int_{\RR\setminus\brk{-\omega,\omega}}|(\hat g-\hat h)\xi|^2\leq O(\exp(-\omega/2)).
	\end{equation}
Finally, combining~(\ref{eqsmalltau1}), (\ref{eqsmalltau2}), and~(\ref{eqsmalltau6}), 
we obtain the desired bound on $\|(g-h)*\phi_{0,\tau^2}\|_2$.
\qed\end{proof}

In order to complete the proof of \Lem~\ref{Prop_Fourier}, we show that \Lem~\ref{Lemma_smalltau} implies that
actually $g(z)$ must be close to $h(z)$ for all points $z\in\brk{-z^*/2,z^*/2}$.
The basic idea is as follows.
For ``small'' $\tau$ the function $\phi_{0,\tau^2}$ is a narrow ``peak'' above the origin.
Therefore, the continuity property of $g$ established in \Lem~\ref{Lemma_gcont} implies that the convolution $g*\phi_{0,\tau^2}(z)$
is ``close'' to the function $g(z)$ itself.
Similarly, $h*\phi_{0,\tau^2}(z)$ is ``close'' to $h(z)$.
Hence, as $g*\phi_{0,\tau^2}(z)$ is ``close'' to $h*\phi_{0,\tau^2}(z)$ by \Lem~\ref{Lemma_smalltau}, we
can infer that $h(z)$ approximates $g(z)$.
Let us carry out the details.

\smallskip
\noindent\emph{Proof of \Lem~\ref{Prop_Fourier}.}
Assume for contradiction that there is some $z\in\brk{-z^*/2,z^*/2}$ and some fixed $0<\alpha=\Omega(1)$
such that $g(z)>h(z)+\alpha$ for arbitrarily large $n$
(an analogous argument applies in the case $g(z)<h(z)-\alpha$).
Let $\tau=\omega^{-1/8}$.
Our goal is to infer that
	\begin{equation}\label{eqPropFourier1}
	\|(h-g)*\phi_{0,\tau^2}\|_2>\exp(-\omega/5),
	\end{equation}
which contradicts \Lem~\ref{Lemma_smalltau}.

To show (\ref{eqPropFourier1}), note that \Cor~\ref{Lemma_FourierAux1} implies that $\|g\|_\infty=O(1)$,
because the bound $\|f\|_\infty=O(1)$ follows from \Lem~\ref{Lemma_fexplicit}.
Similarly, the function $h$ detailed in \Lem~\ref{Lemma_fh} is bounded.
Thus, let $\Gamma=O(1)$ be such that
		$g(\zeta),h(\zeta)\leq\Gamma\mbox{ for all }\zeta\in\RR.$
Then \Lem~\ref{Lemma_gcont} implies that there exists $0<\beta=\Omega(1)$ such that
	\begin{equation}\label{eqPropFourier3}
	(1-0.01\alpha\Gamma^{-1})g(z)-O(n^{-18})\leq g(z')\leq(1+0.01\alpha\Gamma^{-1})g(z)+O(n^{-18})
	\mbox{ if }|z-z'|<\beta.
	\end{equation}
In fact, as $h$ is continuous on $(-z^*,z^*)$, we can choose $\beta$ small enough so that in addition
	\begin{equation}\label{eqPropFourier4}
	(1-0.01\alpha\Gamma^{-1})h(z)-O(n^{-18})\leq h(z')\leq(1+0.01\alpha\Gamma^{-1})h(z)+O(n^{-18})
			\mbox{ if }|z-z'|<\beta.
	\end{equation}
Combining~(\ref{eqPropFourier3}) and~(\ref{eqPropFourier4}), we conclude that
	\begin{equation}\label{eqPropFourier4X}
	|g(z')-g(z'')|\leq0.1\alpha,\,|h(z')-h(z'')|\leq0.1\alpha
		\mbox{ for all $z',z''$ such that }|z-z'|,|z-z''|<\beta.
	\end{equation}

Further, let $\gamma=\int_{\RR\setminus\brk{-\beta/2,\beta/2}}\phi_{0,\tau^2}$.
Then for sufficiently large $n$ we have
$\gamma<0.01\alpha\Gamma^{-1}$, because $\tau\rightarrow0$ as $n\rightarrow\infty$.
Therefore, for any $z'$ such that $|z'-z|<\beta/2$ we have
	\begin{eqnarray}\nonumber
	g*\phi_{0,\tau^2}(z')&=&\int_\RR g(z'+\zeta)\phi_{0,\tau^2}(\zeta)d\zeta
		\geq\int_{-\beta/2}^{\beta/2} g(z'+\zeta)\phi_{0,\tau^2}(\zeta)d\zeta\\
		&\stacksign{(\ref{eqPropFourier4X})}{\geq}&(g(z)-0.01\alpha)(1-\gamma)\geq g(z)-0.02\alpha,
		\mbox{ and similarly}\label{eqPropFourier5}\\
	h*\phi_{0,\tau^2}(z')&\stacksign{(\ref{eqPropFourier4})}{\leq}&h(z)+0.02\alpha.\label{eqPropFourier6}
	\end{eqnarray}
Since~(\ref{eqPropFourier5}) and~(\ref{eqPropFourier6}) are true for all $z'$ such that $|z'-z|<\beta/2$,
our assumption $g(z)>h(z)+\alpha$ yields
	\begin{eqnarray}\label{eqPropFourier7}
	\|(g-h)*\phi_{0,\tau^2}\|_2^2&\geq&
		\int_{-\beta/2}^{\beta/2}|g*\phi_{0,\tau^2}(z')-h*\phi_{0,\tau^2}(z')|^2
			\geq0.5\alpha^2\beta.
	\end{eqnarray}
As $\alpha,\beta$ remain bounded away from $0$ while $\omega(n)\rightarrow\infty$ as $n\rightarrow\infty$, for sufficiently large $n$
we have $0.5\alpha^2\beta>\exp(-\omega/5)$, so that~(\ref{eqPropFourier7}) implies~(\ref{eqPropFourier1}).
\qed

\subsection{Proof of \Lem~\ref{Lemma_fexplicit}}\label{Sec_fexplicit}

To prove \Lem~\ref{Lemma_fexplicit}, we first establish~(\ref{eqBivOut1}) rigorously.
Then, we employ \Prop~\ref{Prop_Bin} and \Thm~\ref{Thm_Nlocal} to obtain explicit expressions for the r.h.s.\ of~(\ref{eqBivOut1}).

\begin{lemma}\label{Lemma_rhoz}
Let $z\in\brk{-z^*,z^*}$, 
$\mu_\order=(1-\rho)n$ and $\lambda=\frac{d\sigma(\rho^d-\rho)}{\sigma_\order(1-c(d-1)\rho^{d-1})}$.
\begin{enumerate}
\item Let $c_z=\bink{n-1}{d-1}p_z$.
	Then there is a unique $0<\rho_z<1$ such that $\rho_z=\exp(c_z(\rho_z^{d-1}-1))$.
	Moreover, $\Erw(\order(H_d(n,p_z)))=(1-\rho_z)n+o(\sqrt{n})=\mu_\order+z\sigma_\order\lambda+o(\sqrt{n})$.
\item Furthermore, $\pr\brk{\order(H_d(n-\nu,p_z))>\ln^2n},\pr\brk{\order(H_d(n-\nu,\bar\mu))>\ln^2n}\leq n^{-100}$.
\end{enumerate}
\end{lemma}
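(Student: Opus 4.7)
The plan is to separate the three assertions: existence and uniqueness of $\rho_z$, the asymptotic expansion of $\Erw(\order(H_d(n,p_z)))$, and the subcritical tail bound on the ``leftover'' hypergraph. For the first, note that $p_z-p=z\sigma\binom{n}{d}^{-1}$ yields $c_z-c=\binom{n-1}{d-1}(p_z-p)=zd\sigma/n$. Since $\sigma=O(\sqrt{n})$ and $|z|\leq z^{*}=\ln^{2}n$, we have $c_z-c=O(\ln^{2}n/\sqrt{n})=o(1)$. Hence for large $n$, every $c_z$ with $z\in[-z^{*},z^{*}]$ lies in a compact $\JJJ'\subset((d-1)^{-1},\infty)$ containing $\JJJ$, and \Thm~\ref{Thm_global}(2) applied to $H_d(n,p_z)$ guarantees the unique root $\rho_z\in(0,1)$ of $\rho_z=\exp(c_z(\rho_z^{d-1}-1))$.

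For $\Erw(\order(H_d(n,p_z)))$, I would combine \Thm~\ref{Thm_Nlocal} applied to $H_d(n,p_z)$ with the coarse concentration from \Thm~\ref{Thm_global}(2). The tail bound makes the contribution from $|\nu-(1-\rho_z)n|\geq\sqrt{n}\ln n$ to $\sum_\nu\nu\,\pr\brk{\order=\nu}$ negligible, while on the LLT window the point probabilities are $(1+o(1))$ times a Gaussian density centered at $(1-\rho_z)n$ with scale $\sigma_\order$, so summing yields $\Erw(\order)=(1-\rho_z)n+o(\sqrt{n})$. For the second equality I would Taylor-expand the implicit function $c\mapsto\rho(c)$: differentiating $\log\rho=c(\rho^{d-1}-1)$ gives $\rho'(c)=\rho(\rho^{d-1}-1)/(1-c(d-1)\rho^{d-1})$, whose denominator is bounded away from zero by~(\ref{eqrhoupper}). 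Thus $\rho_z-\rho=\rho'(c)(c_z-c)+O((c_z-c)^{2})$, and multiplying by $-n$, substituting $c_z-c=zd\sigma/n$, and comparing with the definition of $\lambda$ yields $(1-\rho_z)n=\mu_\order+z\sigma_\order\lambda+O(\ln^{4}n)=\mu_\order+z\sigma_\order\lambda+o(\sqrt{n})$.

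For part~(2), note that $n-\nu=\rho n+O(\sqrt{n}\ln n)$, so $(n-\nu)/n\to\rho$ uniformly in the relevant $\nu$. In the $p_z$ case the average degree of $H_d(n-\nu,p_z)$ is $c'=\binom{n-\nu-1}{d-1}p_z=c_z\bc{(n-\nu)/n}^{d-1}(1+o(1))=c\rho^{d-1}+o(1)$, which by~(\ref{eqrhoupper}) is bounded above by some $c_0'<(d-1)^{-1}$ uniformly in $n$. Hence \Thm~\ref{Thm_global}(1) applied to $H_d(n-\nu,p_z)$ gives $\pr\brk{\order>3(d-1)^{2}(1-(d-1)c_0')^{-2}\ln(n-\nu)}\leq(n-\nu)^{-100}\leq n^{-100}$, and since the threshold is $O(\ln n)<\ln^{2}n$ for large $n$, the claim follows. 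For $H_d(n-\nu,\bar\mu)$ the same argument works once one checks $d\bar\mu/(n-\nu)=c\rho^{d-1}+o(1)$, using $\bar\mu=\rho^{d}\binom{n}{d}p-y$ with $|y|\leq\sqrt{n}\ln n$ and $n-\nu=\rho n(1+o(1))$.

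The main obstacle I anticipate is obtaining the $o(\sqrt{n})$ error term in $\Erw(\order(H_d(n,p_z)))$. The naive computation from \Thm~\ref{Thm_Nlocal} carries a relative error $\delta$, which only guarantees $O(\delta\sigma_\order)=O(\sqrt{n})$ accuracy. To upgrade this to $o(\sqrt{n})$ one either lets $\delta=\delta(n)\to 0$ slowly (legitimate since \Thm~\ref{Thm_Nlocal} holds for every fixed $\delta$), or invokes the branching-process analysis of~\cite{CMS04}, which computes $\Erw(\order(\hnp))$ directly to precision $o(\sqrt{n})$. Everything else is a uniform Taylor expansion plus a one-shot application of the subcritical bound.
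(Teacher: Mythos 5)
Your proposal is correct and follows essentially the same route as the paper: existence and uniqueness of $\rho_z$ from \Thm~\ref{Thm_global} since $c_z=c+zd\sigma/n=c+o(1)$ stays in a compact subset of $((d-1)^{-1},\infty)$, a first-order Taylor expansion of the implicit function $c\mapsto\rho(c)$ (whose denominator $1-c(d-1)\rho^{d-1}$ is controlled by~(\ref{eqrhoupper})) for the linearization, and the subcritical case of \Thm~\ref{Thm_global} applied to $H_d(n-\nu,\cdot)$, whose average degree is $c\rho^{d-1}+o(1)<(d-1)^{-1}$, for part~(2). Your treatment of $\Erw(\order(H_d(n,p_z)))=(1-\rho_z)n+o(\sqrt n)$ is in fact more careful than the paper's bare citation of \Thm~\ref{Thm_global} (which as stated only gives an $o(n)$ guarantee), and the only blemish is the sign in $(1-\rho_z)n=\mu_\order\pm z\sigma_\order\lambda$, an inconsistency already present between the lemma statement and~(\ref{eqrhoz}) in the paper itself.
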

\begin{proof}
Since $c_z\sim c_0=\bink{n-1}{d-1}p>(d-1)^{-1}$,
\Thm~\ref{Thm_global} entails that for each $z\in\brk{-z^*,z^*}$ there exists a unique  $0<\rho_z<1$
such that $\rho_z=\exp(c_z(\rho_z^{d-1}-1))$.
Furthermore, 
the function $z\mapsto\rho_z$ is differentiable by the implicit function theorem.
Consequently, we can Taylor expand $\rho_z$ at $z=0$
by differentiating both sides of the transcendental equation $\rho_z=\exp(c_z(\rho_z^{d-1}-1))$, which yields
	\begin{equation}\label{eqrhoz}
	\rho_z=\rho+\lambda\sigma_\order n^{-1}z+o(n^{-1/2}).
	\end{equation}
Hence, as $\Erw(\order(H_d(n,p_z)))=(1-\rho_z)n+o(\sqrt{n})$
by  \Thm~\ref{Thm_global}, we obtain the first assertion.

The second part follows from \Thm~\ref{Thm_global} as well,
because by~(\ref{eqrhoz}) we have $\nu\sim(1-\rho_z)n\sim(1-\rho)n$ for all $z\in\brk{-z^*,z^*}$.
\qed\end{proof}

The basic reason why \Lem~\ref{Lemma_rhoz} implies~(\ref{eqBivOut1}) is the following.
Let $G\subset V$ be a set of size $\nu$.
If we condition on the event that $G$ is a component, then the hypergraph $H_d(n,p_z)-G$ obtained from $H_d(n,p_z)$ by removing the vertices in $G$
is distributed as $H_d(n-\nu,p_z)$.
For whether or not $G$ is a component does not affect the edges of $H_d(n,p_z)-G$.
Thus, \Lem~\ref{Lemma_rhoz} entails that $H_d(n,p_z)-G$ has no component of order $>\ln^2n$ \whp, whence $G$ is the largest component of $H_d(n,p_z)$.
Therefore, conditioning on the event that $G$ actually is the \emph{largest} component is basically equivalent to just conditioning
on the event that $G$ is a component, and in the latter case the number of edges
in $H_d(n,p_z)-G=H_d(n-\nu,p)$ is binomially distributed $\Bin(N,p_z)$, were we let $N=\bink{n-\nu}d$.
Let us now carry out this sketch in detail.

\begin{lemma}\label{Lemma_fexplicitAux}
We have $1-n^{-98}\leq\frac{f(z)}{n\pr\brk{\Bin(N,p)=\bar\mu}\pr\brk{\order(\hnp)=\nu}}\leq1+n^{-98}.$
\end{lemma}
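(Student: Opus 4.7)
The plan is to derive the claimed factorization by a vertex-partition argument that decouples the event ``$G$ is the largest component of order $\nu$'' from the edges lying entirely inside the complement $V\setminus G$. Fix a set $G\subseteq V$ with $|G|=\nu$, and let $\mathcal{A}_G$ be the event that $G$ is a connected component of $\hnpz$: the induced sub-hypergraph $\hnpz[G]$ is connected, \emph{and} no edge of $\hnpz$ meets both $G$ and $V\setminus G$. Since $\mathcal{A}_G$ only constrains edges touching $G$, the edges contained entirely in $V\setminus G$ remain i.i.d.\ Bernoulli of parameter $p_z$ conditional on $\mathcal{A}_G$. Hence their number is $\Bin(N,p_z)$ with $N=\bink{n-\nu}{d}$, independent of $\mathcal{A}_G$.

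Next, let $\mathcal{B}_G$ be the event that $G$ is the lex-first largest component of $\hnpz$. Clearly $\mathcal{B}_G\subseteq\mathcal{A}_G$. By \Lem~\ref{Lemma_rhoz}(ii), conditional on $\mathcal{A}_G$ the leftover hypergraph on $V\setminus G$ (distributed as $H_d(n-\nu,p_z)$) has all components of order at most $\ln^2 n$ with probability $1-O(n^{-100})$. Since $\nu\sim(1-\rho)n\gg\ln^2n$, on this good event $G$ is the unique component of order exceeding $\ln^2 n$, so $\mathcal{A}_G$ and $\mathcal{B}_G$ coincide up to an error of $O(n^{-100})$. Moreover, on $\mathcal{B}_G$ the quantity $\bar\size(\hnpz)$ equals exactly the number of edges inside $V\setminus G$, so
\[
\pr\brk{\mathcal{B}_G\wedge\bar\size(\hnpz)=\bar\mu}
=(1\pm O(n^{-100}))\cdot\pr\brk{\mathcal{A}_G}\cdot\pr\brk{\Bin(N,p_z)=\bar\mu}.
\]

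Summing over the $\bink{n}{\nu}$ choices of $G$, the left-hand side becomes $\pr\brk{\order(\hnpz)=\nu\wedge\bar\size(\hnpz)=\bar\mu}$, while $\sum_G\pr\brk{\mathcal{A}_G}=\Erw X$, with $X$ the number of components of $\hnpz$ of order exactly $\nu$. By \Thm~\ref{Thm_global}, with probability $1-O(n^{-100})$ the hypergraph $\hnpz$ has a unique component of order $\Omega(n)$ and all other components have order at most $\ln^2 n$; combined with the deterministic bound $X\le n/\nu=O(1)$, this gives $\Erw X=\pr\brk{\order(\hnpz)=\nu}+O(n^{-100})$. Multiplying through by $n$ yields the desired factorization.

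The main technical point is converting the additive $O(n^{-100})$ errors into the multiplicative $1\pm n^{-98}$ bound stated in the lemma. This is routine once one verifies, using \Thm~\ref{Thm_Nlocal}(1) together with \Prop~\ref{Prop_Bin}, that the denominator $n\pr\brk{\Bin(N,p_z)=\bar\mu}\pr\brk{\order(\hnpz)=\nu}$ is at least of order $n^{-O(1)}$, so a $O(n^{-100})$ additive loss is absorbed into a $O(n^{-98})$ multiplicative one. I expect the main obstacle to be precisely this bookkeeping: ensuring that the ``bad'' events discarded at each step (multiple giant components, large leftover component, $X\ge 2$) all contribute at most $O(n^{-100})$ and that the bound is uniform over $z\in[-z^*,z^*]$ and over the prescribed range of $\nu,\bar\mu$.
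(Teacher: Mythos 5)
Your decomposition is the same one the paper uses (condition on a fixed vertex set $G$ being a component, exploit that the edges inside $V\setminus G$ are independent of that event, and identify ``$G$ is a component and the leftover has no large component'' with ``$G$ is the largest component''). The gap is in the error bookkeeping. Your plan produces \emph{additive} errors of order $n^{-100}$ at two places --- in the displayed relation $\pr\brk{\mathcal{B}_G\wedge\bar\size=\bar\mu}=(1\pm O(n^{-100}))\pr\brk{\mathcal{A}_G}\pr\brk{\Bin(N,p_z)=\bar\mu}$, and in $\Erw X=\pr\brk{\order(\hnpz)=\nu}+O(n^{-100})$ --- and you propose to absorb them into the multiplicative $1\pm n^{-98}$ by lower-bounding the denominator by $n^{-O(1)}$. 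That lower bound is not available: \Thm~\ref{Thm_Nlocal}(1) is an \emph{upper} bound $O(n^{-1/2})$, and the lemma must hold uniformly for all $\nu,\bar\mu$ with $|x|,|y|\leq\sqrt{n}\ln n$ (this range is genuinely used in the proof of \Lem~\ref{Lemma_fexplicit}). When $|x|$ or $|y|$ is of order $\sqrt{n}\ln n$, both $\pr\brk{\order(\hnpz)=\nu}$ and $\pr\brk{\Bin(N,p_z)=\bar\mu}$ are of order $n^{-1/2}\exp(-\Theta(\ln^2 n))=n^{-\Theta(\ln n)}$, which is far smaller than $n^{-100}$; an additive $n^{-100}$ error then destroys the claimed multiplicative bound. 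The same issue already sits inside your displayed equation: the event you discard, $\mathcal{A}_G\wedge\{|E(H-G)|=\bar\mu\}\wedge\{H-G$ has a large component$\}$, has probability at most $\pr\brk{\mathcal{A}_G}n^{-100}$, but you need it to be $O(n^{-98})\pr\brk{\mathcal{A}_G}\pr\brk{\Bin(N,p_z)=\bar\mu}$, which does not follow when $\pr\brk{\Bin(N,p_z)=\bar\mu}$ is super-polynomially small.

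The repair is to keep every error multiplicative, which is what the paper does. For the upper bound one writes $\pr\brk{\comp_G}=\pr\brk{\comp_G\wedge\order(H_d(n,p_z)-G)<\nu}/\pr\brk{\order(H_d(n-\nu,p_z))<\nu}$ using independence, and uses that the events $\comp_G\wedge\{\order(H_d(n,p_z)-G)<\nu\}$ are disjoint over $G$, so their probabilities sum to at most $\pr\brk{\order(\hnpz)=\nu}$ exactly (no additive loss). For the lower bound one must decouple the ``no large leftover component'' event from the ``exactly $\bar\mu$ leftover edges'' event by \emph{conditioning} on the latter: given $|E(H_d(n,p_z)-G)|=\bar\mu$, the leftover is distributed as $H_d(n-\nu,\bar\mu)$, and the second part of \Lem~\ref{Lemma_rhoz} gives $\pr\brk{\order(H_d(n-\nu,\bar\mu))<\nu}\geq 1-n^{-99}$, a factor that multiplies $\pr\brk{\Bin(N,p_z)=\bar\mu}$ rather than being subtracted from it. With those two changes your argument goes through; without them the final absorption step fails in the outer part of the admissible range of $\nu,\bar\mu$.
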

\begin{proof}
Let $\mathcal{G}=\{G\subset V:|G|=\nu\}$.
For $G\in\mathcal{G}$ we let $\comp_G$ denote the event that $G$ is a component in $H_d(n,p_z)$.
Then by the union bound
	\begin{eqnarray}\label{eqfexplicit1}
	Q&\leq&\sum_{\GGG\in\mathcal{G}}\pr\brk{\comp_G\wedge|E(H_d(n,p_z)-\GGG)|=\bar\mu}
		=\sum_{\GGG\in\mathcal{G}}\pr\brk{\comp_G}\pr\brk{|E(H_d(n,p_z)-\GGG)|=\bar\mu}.
	\end{eqnarray}
As $H_d(n,p_z)-G$ is the same as $H_d(n-\nu,p_z)$,
$|E(H_d(n,p_z)-\GGG)|$ is binomially distributed with parameters $N$ and $p_z$.
Moreover, 
$\pr\brk{\comp_G\wedge\order(H_d(n,p_z)-\GGG)<\nu}=\pr\brk{\comp_G}\pr\brk{\order(H_d(n,p_z)-\GGG)<\nu}$.
Therefore, (\ref{eqfexplicit1}) yields
	\begin{eqnarray}\nonumber
	{f(z)}n^{-1}&\leq&\pr\brk{\Bin(N,p_z)=\bar\mu}\sum_{\GGG\in\mathcal{G}}\pr\brk{\comp_G}\\
		&=&\pr\brk{\Bin(N,p_z)=\bar\mu}\sum_{\GGG\in\mathcal{G}}
			\frac{\pr\brk{\comp_G\wedge\order(H_d(n,p_z)-\GGG)<\nu}}{\pr\brk{\order(H_d(n,p_z)-\GGG)<\nu}}.
		\label{eqfexplicit2}
	\end{eqnarray}
Furthermore, $\pr\brk{\order(H_d(n,p_z)-\GGG)<\nu}\geq1-n^{-100}$ by the 2nd part of \Lem~\ref{Lemma_rhoz}.
Thus, (\ref{eqfexplicit2}) entails
	\begin{eqnarray}\nonumber
	(1-n^{-100})\pr\brk{\Bin(N,p_z)=\bar\mu}^{-1}n^{-1}f(z)&\leq&
		\sum_{\GGG\in\mathcal{G}}\pr\brk{\comp_G\wedge\order(H_d(n,p_z)-\GGG)<\nu}\\
		&\hspace{-6cm}=&\hspace{-3cm}\;
			\pr\brk{\exists G\in\mathcal{G}:\comp_G\wedge\order(H_d(n,p_z)-\GGG)<\nu}
		\leq\pr\brk{\order(H_d(n,p_z))=\nu}.
	\label{eqfexplicit3}
	\end{eqnarray}

Conversely, if $G\in\mathcal{G}$ is a component of $H_d(n,p_z)$ and $\order(H_d(n,p_z)-G)<\nu$, then
$G$ is the unique largest component of $H_d(n,p_z)$.
Therefore,
	\begin{eqnarray*}\nonumber
	n^{-1}f(z)&\geq&\sum_{\GGG\in\mathcal{G}}\pr\brk{\comp_G\wedge\order(H_d(n,p_z)-\GGG)<\nu\wedge|E(H_d(n,p_z)-\GGG)|=\bar\mu}\\
	&=&\sum_{\GGG\in\mathcal{G}}\pr\brk{\comp_G}\pr\brk{\order(H_d(n,p_z)-\GGG)<\nu\wedge|E(H_d(n,p_z)-\GGG)|=\bar\mu}.
		\label{eqfexplicit4}
	\end{eqnarray*}
Further, given that $|E(H_d(n,p_z)-\GGG)|=\bar\mu$, $H_d(n,p_z)-G$ is just a random hypergraph $H_d(n-\nu,\bar\mu)$.
Hence, (\ref{eqfexplicit4}) yields
	\begin{eqnarray}\nonumber
	n^{-1}f(z)&\geq&\pr\brk{\order(H_d(n-\nu,\bar\mu))<\nu}\pr\brk{\Bin(N,p_z)=\bar\mu}\sum_{\GGG\in\mathcal{G}}\pr\brk{\comp_G}\\
	&\geq&\pr\brk{\order(H_d(n-\nu,\bar\mu))<\nu}\pr\brk{\Bin(N,p_z)=\bar\mu}\pr\brk{\order(H_d(n,p_z))=\nu},
		\label{eqfexplicit5}
	\end{eqnarray}
where the last estimate follows from the union bound.
Moreover, 
by the 2nd part of \Lem~\ref{Lemma_rhoz}.
Plugging this into~(\ref{eqfexplicit5}), we get
	\begin{equation}\label{eqfexplicit8}
	n^{-1}f(z)\geq(1-n^{-99})\pr\brk{\Bin(N,p_z)=\bar\mu}\pr\brk{\order(H_d(n,p_z))=\nu}.
	\end{equation}
Combining~(\ref{eqfexplicit3}) and~(\ref{eqfexplicit8}) completes the proof.
\qed\end{proof}

\noindent\emph{Proof of \Lem~\ref{Lemma_fexplicit}.}
Suppose that $|x|,|y|\leq\sqrt{n}\ln n$.
Then \Thm~\ref{Thm_Nlocal} entails that $\pr\brk{\order(H_d(n,p_z)=\nu}=O(n^{-\frac12})$,
and \Prop~\ref{Prop_Bin} yields $\pr\brk{\Bin(N,p_z)=\bar\mu}=O(n^{-\frac12})$.
Thus, the assertion follows from \Lem~\ref{Lemma_fexplicitAux}.

With respect to the 2nd assertion, suppose that $n^{-\frac12}\bink{x}y\in\III$,
fix some $\gamma>0$, and consider $z\in\brk{-\gamma,\gamma}$.
Let $c_z=\bink{n-1}{d-1}p_z$, and let $0<\rho_z<1$ be the unique solution to to $\rho_z=\exp(c_z(\rho_z^{d-1}-1))$ (cf.\ \Lem~\ref{Lemma_rhoz}).
In addition, let
	$$\mu_{\order,z}=(1-\rho_z)n,\ \sigma_{\order,z}=\frac{\sqrt{\rho_z\left(1-\rho_z + c_z(d-1)(\rho_z-\rho_z^{d-1})\right)n}}{1-c_z(d-1)\rho_z^{d-1}},$$
and set $\sigma_{\order}=\sigma_{\order,0}$.
Then \Thm~\ref{Thm_Nlocal} implies that
	\begin{eqnarray}\nonumber
	\pr\brk{\order(H_d(n,p_z))=\nu}&\sim&\frac1{\sqrt{2\pi}\sigma_{\order,z}}\exp\bc{-\frac{(\nu-\mu_{\order,z})^2}{2\sigma_{\order,z}^2}}\\
		&\stacksign{\Lem~\ref{Lemma_rhoz}}{\sim}&
			\frac1{\sqrt{2\pi}\sigma_{\order}}\exp\bc{-\frac{(\nu-(1-\rho)n-z\lambda\sigma_\order)^2}{2\sigma_{\order}^2}}.
		\label{eqfexplicitNlocal}
	\end{eqnarray}
In addition, since 
$Np_z=\bink{n-\nu}{d}(p+z\sigma\bink{n}{d}^{-1})=\rho^d(m_0+z\sigma-c\rho^{-1}x)+o(\sqrt{n})$, 
\Prop~\ref{Prop_Bin} entails that
	\begin{eqnarray}
	\pr\brk{\Bin(N,p_z)=\bar\mu}
		&\sim&\frac1{\sqrt{2\pi\rho^dm_0}}\exp\bc{-\frac{(\bar\mu-\rho^d(m_0+z\sigma-c\rho^{-1}x))^2}{2\rho^dm_0}}.
		\label{eqfexplicitNlocalB}
	\end{eqnarray}	
Hence, \Lem~\ref{Lemma_fexplicitAux} yields
	\begin{eqnarray*}
	n^{-1}f(z)&\sim&\pr\brk{\order(H_d(n,p_z))=\nu}\pr\brk{\Bin(N,p_z)=\bar\mu}\\
		&\stacksign{(\ref{eqfexplicitNlocal}), (\ref{eqfexplicitNlocalB})}{\sim}&\frac1{2\pi\sqrt{\rho^dm_0}\sigma_{\order}}\exp\bc{-\frac{(\nu-(1-\rho)n-z\lambda\sigma_\order)^2}{2\sigma_{\order}^2}
			-\frac{(\bar\mu-\rho^d(m_0+z\sigma-c\rho^{-1}x))^2}{2\rho^dm_0}}\\
		&\sim&\frac1{2\pi\rho^{d/2}\sigma\sigma_{\order}}\exp\bc{-\frac{(x-z\lambda\sigma_{\order})^2}{2\sigma_{\order}^2}
			-\frac{(y+\rho^d\sigma z-c\rho^{d-1}x)^2}{2\rho^d\sigma^2}}
=n^{-1}F(z),
	\end{eqnarray*}
so that we have established the first assertion.
	
Finally, let us assume that $\gamma_0<|z|\leq|z^*|$ for some large enough but fixed $\gamma_0>0$.
Then $|Np_z-\bar\mu|=\Omega(z\sqrt{n})$.
Therefore, \Prop~\ref{Prop_Bin} implies that $\pr\brk{\Bin(n,p_z)=\bar\mu}\leq n^{-1/2}\exp(-\Omega(z^2))$.
Furthermore, $\pr\brk{\order(H_d(n,p_z))=\nu}=O(n^{-1/2})$ by \Thm~\ref{Thm_Nlocal}.
Hence, \Lem~\ref{Lemma_fexplicitAux} entails that $f(z)\leq O(\exp(-\Omega(z^2))+n^{-97})$,
as desired.
\qed

\subsection{Proof of \Lem~\ref{Lemma_gcont}}\label{Sec_gcont}

Throughout this section we assume that $z,z'\in\brk{-z^*,z^*}$, and that $|z-z'|<\beta$ for some small $\beta>0$.
In addition, we may assume that
	\begin{equation}\label{eqgcont0}
	g(z')\geq n^{-30},
	\end{equation}
because otherwise the assertion is trivially true.
To compare $g(z)$ and $g(z')$, we first express $g(z)$ in terms of the number $C_d(\nu,m_z-\bar\mu)$ of  connected
$d$-uniform hypergraphs of order $\nu$ and size $m_z-\bar\mu$.

\begin{lemma}\label{Lemma_gexpression}
We have $\bink{\bink{n}d}{m_z}g(z)\sim n\bink{n}{\nu}C_d(\nu,m_z-\bar\mu)\bink{\bink{n-\nu}d}{\bar\mu}$.
A similar statement is true for $g(z')$.
\end{lemma}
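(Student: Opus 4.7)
The plan is to convert the asymptotic equivalence into a routine double-counting argument supported by the tail bound of \Lem~\ref{Lemma_rhoz}. Since $H_d(n,m_z)$ is uniformly distributed over the $\bink{\bink{n}d}{m_z}$ hypergraphs with $n$ vertices and $m_z$ edges, the quantity $\bink{\bink{n}d}{m_z} g(z)/n$ counts exactly the number $N_z$ of labelled $d$-uniform hypergraphs $H$ on $V=\{1,\ldots,n\}$ with $m_z$ edges, $\order(H)=\nu$, and $\bar\size(H)=\bar\mu$. Hence the lemma reduces to showing $N_z\sim\bink{n}{\nu}\,C_d(\nu,m_z-\bar\mu)\,\bink{\bink{n-\nu}d}{\bar\mu}$.

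I would enumerate $N_z$ by singling out the vertex set $G$ of the largest component of $H$. For the upper bound, each contributing $H$ decomposes as $H=H_1\cup H_2$, where $H_1=H[G]$ is a connected hypergraph on the $\nu$-set $G$ with $m_z-\bar\mu$ edges and $H_2=H-G$ is a hypergraph on $V\setminus G$ with $\bar\mu$ edges (no edge of $H$ can straddle $G$ because $G$ is a component). Counting the triples $(G,H_1,H_2)$ directly gives
\[
N_z \;\leq\; \bink{n}{\nu}\,C_d(\nu,m_z-\bar\mu)\,\bink{\bink{n-\nu}d}{\bar\mu}.
\]
For the matching lower bound, restrict to those triples in which additionally $\order(H_2)<\nu$. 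Then $G$ is automatically the unique component of maximum order in the re-assembled hypergraph $H=H_1\cup H_2$, so distinct triples yield distinct hypergraphs counted by $N_z$, giving
\[
N_z \;\geq\; \bink{n}{\nu}\,C_d(\nu,m_z-\bar\mu)\,\bink{\bink{n-\nu}d}{\bar\mu}\,\pr\brk{\order(H_d(n-\nu,\bar\mu))<\nu}.
\]

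To close the gap I would invoke the second part of \Lem~\ref{Lemma_rhoz}, which yields $\pr\brk{\order(H_d(n-\nu,\bar\mu))>\ln^2 n}\leq n^{-100}$; since $\nu=(1-\rho)n+O(\sqrt{n}\ln n)\gg\ln^2 n$, the restriction $\order(H_2)<\nu$ costs only a factor of $1-O(n^{-100})$, so the upper and lower bounds coincide asymptotically. The main ``obstacle'' -- establishing that the residual hypergraph on $V\setminus G$ is subcritical enough to have only logarithmic-size components -- has already been handled by \Lem~\ref{Lemma_rhoz} (the average degree of $H_d(n-\nu,\bar\mu)$ is $\sim\rho^{d-1}c$, which lies strictly below $(d-1)^{-1}$ by~\eqref{eqrhoupper}). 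The analogous statement for $g(z')$ is obtained by replacing $m_z$ and all associated quantities with their primed counterparts, with no change in the argument.
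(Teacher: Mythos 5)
Your proposal is correct and follows essentially the same route as the paper: the paper bounds $n^{-1}g(z)$ above by the expected number of components of order $\nu$ and size $m_z-\bar\mu$ in $H_d(n,m_z)$ (which is exactly your triple count divided by $\bink{\bink{n}d}{m_z}$) and below by the probability that such a component exists while all others have order $<\nu$, then closes the gap with $\pr\brk{\order(H_d(n-\nu,\bar\mu))<\nu}\sim1$ from \Lem~\ref{Lemma_rhoz}. Your recasting of this as an explicit bijective/injective count of triples $(G,H_1,H_2)$ is just the counting form of the same first-moment argument.
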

\begin{proof}
We claim that
	\begin{equation}\label{eqgcont1}
	n^{-1}g(z)\leq\bink{n}{\nu}C_d(\nu,m_z-\bar\mu)\bink{\bink{n-\nu}d}{\bar\mu}\bink{\bink{n}d}{m_z}^{-1}.
	\end{equation}
The reason is that $n^{-1}g(z)$ is the probability that the largest component of $H_d(n,m_z)$
has order $\nu$ and size $m_z-\bar\mu$, while
the right hand side equals the \emph{expected} number of such components.
For there are $\bink{n}{\nu}$ ways to choose $\nu$ vertices where to place such a component.
Then, there are $C_d(\nu,m_z-\bar\mu)$ ways to choose the component itself.
Moreover, there are $\bink{\bink{n-\nu}d}{\bar\mu}$ ways to choose the hypergraph induced on the remaining $n-\nu$ vertices,
while the total number of $d$-uniform hypergraphs of order $n$ and size $m_z$ is $\bink{\bink{n}d}{m_z}$.
Conversely,
	\begin{equation}\label{eqgcont2}
	n^{-1}g(z)\geq\bink{n}{\nu}C_d(\nu,m_z-\bar\mu)\bink{\bink{n-\nu}d}{\bar\mu}\pr\brk{\order(H_d(n-\nu,\bar\mu))<\nu}\bink{\bink{n}d}{m_z}^{-1}.
	\end{equation}
For the right hand side equals the probability that $H_d(n,m_z)$ has one component of order $\nu$ and size $m_z-\bar\mu$,
while all other components have order $<\nu$.
Since $\pr\brk{\order(H_d(n-\nu,\bar\mu))<\nu}\sim1$ by \Lem~\ref{Lemma_rhoz}, the assertion follows from
(\ref{eqgcont1}) and~(\ref{eqgcont2}).
\qed\end{proof}

\Lem~\ref{Lemma_gexpression} entails that
	\begin{equation}\label{eqgcont3}
	\frac{g(z')}{g(z)}\sim\frac{C_d(\nu,m_{z'}-\bar\mu)}{C_d(\nu,m_z-\bar\mu)}\cdot\frac{\bink{\bink{n}d}{m_{z}}}{\bink{\bink{n}d}{m_{z'}}}.
	\end{equation}
Thus, as a next step we estimate the two factors on the r.h.s.\ of~(\ref{eqgcont3}).

\begin{lemma}\label{Lemma_Cquotient}
If $|z-z'|<\beta$ for a small enough $\beta>0$, then
$\frac{C_d(\nu,m_{z'}-\bar\mu)}{C_d(\nu,m_z-\bar\mu)}\cdot p^{m_z-m_{z'}}\leq1+\alpha/2$.
\end{lemma}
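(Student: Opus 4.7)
The plan is to interpret the ratio of $C_d$-values probabilistically and then to extract the required smoothness from Lemma~\ref{Lemma_fexplicit}. Introduce the auxiliary distribution
\[
\pi(k)\;:=\;C_d(\nu,k)\,p^{k}(1-p)^{\bink{\nu}{d}-k}\;=\;\pr\brk{H_d(\nu,p)\mbox{ is connected with exactly }k\mbox{ edges}}.
\]
With $s=m_z-\bar\mu$, $s'=m_{z'}-\bar\mu$, and $\Delta=m_z-m_{z'}=s-s'$ (so $|\Delta|\le\beta\sigma+O(1)=O(\beta\sqrt n)$, since $\sigma=\Theta(\sqrt n)$), a direct manipulation gives
\[
\frac{C_d(\nu,s')}{C_d(\nu,s)}\cdot p^{\Delta}\;=\;\frac{\pi(s')}{\pi(s)}\cdot p^{2\Delta}(1-p)^{-\Delta}.
\]
Since $p=\Theta(n^{-(d-1)})$ the factor $(1-p)^{-\Delta}$ equals $1+o(1)$, so the claim reduces to establishing that $\pi(s')/\pi(s)\le(1+\alpha/3)\,p^{-2\Delta}$.

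To bound $\pi(s')/\pi(s)$ I would translate it back into the joint law $(\order,\size,\bar\size)$ in $\hnp$. Exactly as in Lemma~\ref{Lemma_fexplicitAux}, the component-partition decomposition produces, for any admissible $k$,
\[
\pr\brk{\order(\hnp)=\nu\wedge\size(\hnp)=k\wedge\bar\size(\hnp)=\bar\mu}\;\sim\;\bink{n}{\nu}\bink{\bink{n-\nu}{d}}{\bar\mu}\,p^{\bar\mu}(1-p)^{\bink{n}{d}-\bink{\nu}{d}-\bar\mu}\cdot\pi(k),
\]
so every factor independent of $k$ cancels when the ratio is taken at $k=s'$ and $k=s$. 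Summing the left-hand side over $k$ yields $\pr\brk{\order(\hnp)=\nu\wedge\bar\size(\hnp)=\bar\mu}$, and via \Prop~\ref{Prop_Bin} this is related to $f(z)$, for which Lemma~\ref{Lemma_fexplicit}(2) supplies the explicit Gaussian density $F(z)$. Hence $\pi(k)$ inherits a local Gaussian structure from $F$, and the desired smoothness follows by evaluating $F$ at two shifted arguments.

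The concluding step is a Lipschitz estimate on the Gaussian density: on any compact window of the rescaled variables $(x\sigma_\order^{-1},y\sigma^{-1})$ the exponent of $F$ has a bounded gradient, so shifting the $y$-coordinate by $\Delta=O(\beta\sqrt n)$ multiplies $F$ by a factor $1+O(\beta)$. Choosing $\beta$ small enough in terms of $\alpha$, $\III$, and $\JJJ$ then gives $\pi(s')/\pi(s)\le(1+\alpha/3)\,p^{-2\Delta}$, and together with the reduction above this delivers the claim.

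\textbf{Main obstacle.} The principal difficulty is that Lemma~\ref{Lemma_fexplicit} controls only the marginal $(\order,\bar\size)$ of the joint law, whereas the argument above requires the finer joint law $(\order,\size,\bar\size)$. One must therefore carry out the extraction of $\pi(k)$ from $f(z)$ carefully, factoring off the binomial $\bar\size$-contribution via \Prop~\ref{Prop_Bin} without damaging the Gaussian smoothness of the remaining factor. A secondary technicality is the $\sqrt{\log n}$-enhancement of the Gaussian exponent arising from the a~priori window $|x|,|y|\le\sqrt n\ln n$: for $(x,y)$ deep in that tail region the multiplicative error from an $O(\beta\sqrt n)$-shift degrades to $1+O(\beta\sqrt{\log n})$ rather than $1+O(\beta)$, but this is absorbed by the additive $+n^{-20}$ slack in the conclusion of Lemma~\ref{Lemma_gcont}, which is the sole consumer of the present estimate.
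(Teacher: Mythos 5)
Your reduction to bounding $\pi(s')/\pi(s)$ is sound, and the identity expressing $\pr\brk{\order(\hnp)=\nu\wedge\size(\hnp)=k\wedge\bar\size(\hnp)=\bar\mu}$ as a $k$-independent constant times $\pi(k)$ is correct --- it is essentially the paper's (\ref{eqCquotient0})--(\ref{eqCquotient1}) with $p$ in place of $p_{z'}$. The gap is the next step. Summing that identity over $k$ yields only the marginal $\pr\brk{\order(\hnp)=\nu\wedge\bar\size(\hnp)=\bar\mu}$, i.e.\ $f$, in which $\size$ has been integrated out completely. No care in ``factoring off the binomial $\bar\size$-contribution'' can recover the ratio $\pi(s')/\pi(s)$ from this marginal: the family $F(z)$ determines (up to $1+o(1)$ errors, and only on compact $z$-windows) the exponentially tilted sums $\sum_k\pi(k)$, but two individual point masses $\pi(s),\pi(s')$ are not functions of those sums, and converting such moment-generating information into a pointwise ratio estimate is precisely a local limit theorem for $\size$. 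You correctly flag this as the ``main obstacle,'' but you supply no mechanism to overcome it; asserting that ``$\pi(k)$ inherits a local Gaussian structure from $F$'' assumes what is to be proved and makes the argument circular, since pointwise Gaussian behaviour of $k\mapsto\pi(k)$ is essentially the bivariate local limit theorem the whole section is building towards.

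The paper's proof supplies exactly the missing mechanism (\Lem~\ref{Lemma_CquotientAux}, proved in \Sec~\ref{Sec_Cquotient}): expose the edges of $H_d(n,p_{z'})$ in three rounds, where the last round adds, independently with probability $q_2\sim\eps p_{z'}$, only edges lying inside the giant component $G_1$ created earlier. Conditional on the first two rounds, the number $|\mathcal F|$ of these edges is exactly binomial with variance $\Omega(\sigma^2)$, so $\size(H_3)=\size(H_2)+|\mathcal F|$ is a convolution with a binomial, and \Prop~\ref{Prop_Bin} shows that shifting the target value by $|m_z-m_{z'}|\leq\beta\sigma$ changes the probability by at most a factor $1+\alpha/3$ --- with no knowledge of the law of $\size(H_2)$ required. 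That convolution step is the idea your proposal lacks. A secondary omission: the paper also needs the a priori lower bound $P'\geq n^{-31}$ (inherited from the standing assumption $g(z')\geq n^{-30}$) to absorb additive $n^{-80}$ error terms into the multiplicative bound; your sketch does not account for where such additive errors go.
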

To prove \Lem~\ref{Lemma_Cquotient}, we employ the following estimate, which we will
establish in \Sec~\ref{Sec_Cquotient}.
\begin{lemma}\label{Lemma_CquotientAux}
If $|z-z'|<\beta$ for a small enough $\beta>0$, then letting
	\begin{eqnarray*}
	P&=&\pr\brk{\order(H_d(n,p_{z'}))=\nu\wedge\size(H_d(n,p_{z'}))=m_{z}-\bar\mu},\\
	P'&=&\pr\brk{\order(H_d(n,p_{z'}))=\nu\wedge\size(H_d(n,p_{z'}))=m_{z'}-\bar\mu},
	\end{eqnarray*}
we have $(1-\alpha/3)P-n^{-80}\leq P'\leq(1+\alpha/3)P+n^{-80}$.
\end{lemma}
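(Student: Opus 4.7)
\smallskip\noindent\emph{Proof plan.}
The plan is to reduce the ratio $P'/P$ to a ratio of values of the conditional distribution of $|E(H_d(\nu,p_{z'}))|$ given that $H_d(\nu,p_{z'})$ is connected, and to bound that ratio by exploiting the proximity of both $s$ and $s'$ to the mode of this conditional distribution.

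First I decompose the event $\{\order(H_d(n,p_{z'}))=\nu,\,\size(H_d(n,p_{z'}))=s\}$ over the choice of vertex set $G\subset V$ of size $\nu$ for the largest component, exactly as in the proof of \Lem~\ref{Lemma_fexplicitAux}. For a fixed such $G$, the probability that $G$ is a component with exactly $s$ edges equals $C_d(\nu,s)\,p_{z'}^{s}\,(1-p_{z'})^{\binom{\nu}{d}-s}\,(1-p_{z'})^{N_{\mathrm{cross}}}$ with $N_{\mathrm{cross}}=\binom{n}{d}-\binom{\nu}{d}-\binom{n-\nu}{d}$; conditioning on the event that no component of $H_d(n,p_{z'})-G$ has order $\ge\nu$ costs only a factor $1-o(n^{-99})$ by \Lem~\ref{Lemma_rhoz}. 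Summing over $G$ and arguing via the union bound as in \Lem~\ref{Lemma_fexplicitAux} gives
$$P=(1+o(1))\bink{n}{\nu}\,C_d(\nu,s)\,p_{z'}^{s}\,(1-p_{z'})^{\binom{\nu}{d}-s+N_{\mathrm{cross}}},\quad s=m_z-\bar\mu,$$
and the analogous expression holds for $P'$ with $s'=m_{z'}-\bar\mu=s+(m_{z'}-m_z)$; therefore
$$\frac{P'}{P}=(1+o(1))\cdot\frac{C_d(\nu,s')}{C_d(\nu,s)}\left(\frac{p_{z'}}{1-p_{z'}}\right)^{s'-s}.$$

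The right hand side is exactly $\pr[|E(H_d(\nu,p_{z'}))|=s'\mid H_d(\nu,p_{z'})\text{ conn.}]/\pr[|E(H_d(\nu,p_{z'}))|=s\mid H_d(\nu,p_{z'})\text{ conn.}]$. The hypotheses $n^{-1/2}\bink{x}{y}\in\III$ and $|z-z'|<\beta$ imply that both $s$ and $s'$ lie within $O(\beta\sqrt n)$ of the mode $\hat s=(1-\rho_{z'}^d)\bink{n}{d}p_{z'}$ of this conditional distribution. Using the variance estimate $\hat\sigma^2=\Theta(n)$ that can be extracted from the branching-process computations of~\cite{CMS04}, a second order Taylor expansion of the log-ratio around the mode yields $|\log(P'/P)|=O(\beta)$, which delivers $|P'/P-1|\le\alpha/3$ provided $\beta$ is chosen small enough relative to $\alpha$. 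The additive $n^{-80}$ slack in the lemma's statement then absorbs all the error terms collected along the way.

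The main obstacle will be justifying the Gaussian-style comparison of values of $\pr[|E|=\cdot\mid\text{conn.}]$ without invoking a local limit theorem for this conditional distribution, which would be circular (the present lemma is used to establish the LLT for $(\order,\size)$). I would sidestep this by establishing directly, via an edge-swap argument on pairs of connected hypergraphs on $\nu$ vertices whose sizes differ by one, that the per-unit ratio $\pr[|E|=s+1\mid\text{conn.}]/\pr[|E|=s\mid\text{conn.}]$ deviates from $1$ by at most $O(1/\sqrt n)$ uniformly in the $O(\sqrt n)$-window around the mode, and then iterating across the $O(\beta\sqrt n)$ steps from $s$ to $s'$. The per-step control rests only on the combinatorial edge-swap count and on the qualitative bound for the connectivity probability from~\cite{CMS04}, neither of which relies on the LLT we are currently trying to establish; alternatively, one can express $P$ and $P'$ as Binomial-weighted sums of values of $G(M,\bar\mu)=n\pr[\order(H_d(n,M))=\nu,\bar\size(H_d(n,M))=\bar\mu]$ and use the explicit Gaussian formula for $f$ from \Lem~\ref{Lemma_fexplicit} to control the required differences.
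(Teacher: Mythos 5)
Your opening reduction (expressing $P$ and $P'$ via the expected number of components, so that $P'/P\sim\frac{C_d(\nu,s')}{C_d(\nu,s)}\bc{\frac{p_{z'}}{1-p_{z'}}}^{s'-s}$) is correct, but it only restates the problem: the quantity you now need to control is exactly the ratio of consecutive values of the conditional edge distribution of $H_d(\nu,p_{z'})$ given connectivity, over a window of width $\Theta(\beta\sqrt n)$ around its mode, to relative precision $1+O(\beta)$. That is precisely a local limit theorem for this conditional distribution (it is \Thm~\ref{Thm_edges} of the paper), which sits downstream of the very lemma you are proving. You see the circularity, but neither of your escape routes closes it. The edge-swap idea would have to show that $\frac{C_d(\nu,s+1)}{C_d(\nu,s)}\cdot\frac{p_{z'}}{1-p_{z'}}=1+O(n^{-1/2})$ uniformly near the mode; switching arguments give crude one-sided inequalities of the form $(s+1)C_d(\nu,s+1)\leq\bc{\bink{\nu}d-s}C_d(\nu,s)$, but a two-sided bound with relative error $O(n^{-1/2})$ at the correct centering is equivalent in strength to the enumeration/LLT result itself and does not follow from any standard switching count plus the constant-factor connectivity bounds of~\cite{CMS04}. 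Your second alternative is also circular: extracting pointwise control of $\pr\brk{\order(\hnm)=\nu\wedge\bar\size(\hnm)=\bar\mu}$ from the explicit formula for $f$ requires the deconvolution argument (\Lem~\ref{Prop_Fourier}), which depends on \Lem~\ref{Lemma_gcont}, which depends on \Lem~\ref{Lemma_Cquotient}, which depends on the present lemma.

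The idea you are missing is the paper's multi-round edge exposure. One generates $H_d(n,p_{z'})$ in three rounds: first a hypergraph $H_1$ with edge probability $q_1=(1-\eps)p_{z'}$ (still supercritical), then the edges not lying entirely inside the largest component $G_1$ of $H_1$ with probability $q_2$, and finally the edges lying entirely inside $G_1$ with probability $q_2$. The third round cannot change the order of the largest component, and with probability $1-O(n^{-100})$ the largest component of the final hypergraph contains $G_1$, so $\size(H_3)=\size(H_2)+|\mathcal F|$ where, conditionally on the first two rounds, $|\mathcal F|$ is a genuine binomial $\Bin\bc{\bink{|G_1|}d-\size(H_1),q_2}$ with variance $\Omega(\sigma^2)$. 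The comparison of $P$ and $P'$ then reduces to comparing two point probabilities of this plain binomial at targets differing by $|z-z'|\sigma\leq\beta\sigma$, which \Prop~\ref{Prop_Bin} handles with no information whatsoever about $C_d(\nu,\cdot)$. This decoupling of the ``edge-count smoothness'' from the enumeration problem is what makes the lemma provable at this stage; without it your argument does not go through.
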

\emph{Proof of \Lem~\ref{Lemma_Cquotient}.}
We observe that
	\begin{eqnarray}\label{eqCquotient0}
	P&\leq&\bink{n}{\nu}C_d(\nu,m_z-\bar\mu)p_{z'}^{m_{z}-\bar\mu}(1-p_{z'})^{\bink{n}{d}-\bink{n-\nu}{d}-(m_z-\bar\mu)},
	\end{eqnarray}
because the r.h.s.\ equals the \emph{expected} number of components of order $\nu$ and size $m_z-\bar\mu$ in $H_d(n,p_{z'})$.
(For there are $\bink{n}{\nu}$ ways to choose the $\nu$ vertices where to place the component and $C_d(\nu,m_z-\bar\mu)$ ways to
choose the component itself.
Furthermore, edges are present with probability $p_{z'}$ independently, and thus
the $p_{z'}^{m_z-\bar\mu}$ factor accounts for the presence of the $m_z-\bar\mu$ desired edges among the selected $\nu$ vertices.
Moreover, the $(1-p_{z'})$-factor rules out further edges among the $\nu$ chosen vertices and in-between the $\nu$ chosen and the
$n-\nu$ remaining vertices.)
Conversely,
	\begin{eqnarray}\label{eqCquotient1}
	P&\geq&\bink{n}{\nu}C_d(\nu,m_{z}-\bar\mu)p_{z'}^{m_{z}-\bar\mu}(1-p_{z'})^{\bink{n}{d}-\bink{n-\nu}{d}-(m_z-\bar\mu)}
		\pr\brk{\order(H_d(n-\nu,p_{z'})<\nu)};
	\end{eqnarray}
for the r.h.s.\ is the probability that
there occurs exactly one component of order $\nu$ and size $m_z-\bar\mu$, while all other components have order $<\nu$.
As \Lem~\ref{Lemma_rhoz} entails that
	$\pr\brk{\order(H_d(n-\nu,p_{z'})<\nu)}\sim1$,
(\ref{eqCquotient0}) and~(\ref{eqCquotient1}) yield
	\begin{eqnarray*}\label{eqCquotient2}
	P&\sim&\bink{n}{\nu}C_d(\nu,m_{z}-\bar\mu)p_{z'}^{m_{z}-\bar\mu}(1-p_{z'})^{\bink{n}{d}-\bink{n-\nu}{d}-(m_z-\bar\mu)}\mbox{, and similarly}\\
	P'&\sim&\bink{n}{\nu}C_d(\nu,m_{z'}-\bar\mu)p_{z'}^{m_{z'}-\bar\mu}(1-p_{z'})^{\bink{n}{d}-\bink{n-\nu}{d}-(m_{z'}-\bar\mu)}.\label{eqCquotient3}
	\end{eqnarray*}
Therefore,
	\begin{eqnarray}
	\frac{C_d(\nu,m_{z'}-\bar\mu)}{C_d(\nu,m_{z}-\bar\mu)}&\sim&
		\frac{P'}{P}\cdot p_{z'}^{m_{z'}-m_z}\cdot (1-p_{z'})^{m_z-m_{z'}}
		\sim \frac{P'}{P}\cdot p^{m_{z'}-m_z}\nonumber\\
		&\stacksign{\Lem~\ref{Lemma_CquotientAux}}{\leq}&\;\bc{1+\frac{\alpha}3+\frac2{n^{80}P'-2}}p^{m_{z'}-m_z}.
		\label{eqCquotient4}
	\end{eqnarray}
In order to show that the r.h.s.\ of~(\ref{eqCquotient4}) is $\leq1+\alpha/2$, we need to lower bound $P'$: by \Prop~\ref{Prop_Bin}
	\begin{eqnarray}
	P'		&\geq&\pr\brk{\order(H_d(n,m_{z'}))=\nu\wedge\size(H_d(n,m_{z'}))=m_{z'}-\bar\mu}
			\cdot\pr\brk{\Bin\bc{\bink{n}d,p_{z'}}=m_{z'}}\nonumber\\
		&\geq&n^{-1}g(z') \;\stacksign{(\ref{eqgcont0})}{\geq}\; n^{-31}.
	\label{eqCquotient55}
	\end{eqnarray}
Finally, combining~(\ref{eqCquotient4}) and~(\ref{eqCquotient55}), we obtain the desired
bound on $C(\nu,m_{z'}-\bar\mu)$.
\qed

\begin{lemma}\label{Lemma_Binomquotient}
We have $\bink{\bink{n}d}{m_{z'}}\bink{\bink{n}d}{m_{z}}^{-1}=\exp(O(z-z')^2)\cdot p^{m_z-m_{z'}}$.
\end{lemma}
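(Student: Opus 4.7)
The plan is to apply Proposition~\ref{Prop_Bin} separately to $\bink{N}{m_z}$ and $\bink{N}{m_{z'}}$, where $N:=\bink{n}{d}$, and then take the quotient. Since $\pr\brk{\Bin(N,p)=k}=\bink{N}{k}p^k(1-p)^{N-k}$ and both $|m_z-Np|=|z\sigma+O(1)|$ and $|m_{z'}-Np|$ are at most $O(\sigma\ln^2 n)=o(\sigma^{4/3})$, Proposition~\ref{Prop_Bin} yields
\[
\bink{N}{m_z}=\frac{1+o(1)}{\sqrt{2\pi}\,\sigma}\exp\bc{-\frac{(m_z-Np)^2}{2\sigma^2}}p^{-m_z}(1-p)^{m_z-N},
\]
and analogously for $m_{z'}$. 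Dividing these two identities gives the master formula
\[
\frac{\bink{N}{m_{z'}}}{\bink{N}{m_z}}=(1+o(1))\exp\bc{\frac{(m_z-Np)^2-(m_{z'}-Np)^2}{2\sigma^2}}p^{m_z-m_{z'}}(1-p)^{m_{z'}-m_z}.
\]

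From here only two subleading factors remain to be controlled. For $(1-p)^{m_{z'}-m_z}$, using $m_{z'}-m_z=(z'-z)\sigma+O(1)$, $\log(1-p)=-p+O(p^2)$, and $p\sigma=\sqrt{p^3 N(1-p)}=O(n^{-(d-3/2)})=o(1)$ for all $d\ge 2$, this factor evaluates to $1+o(1)$ uniformly on $\brk{-z^*,z^*}$. Plugging $m_z-Np=z\sigma+\theta$ and $m_{z'}-Np=z'\sigma+\theta'$ with $\theta,\theta'\in[0,1)$ (the ceiling remainders from $m_z=\ug{Np+z\sigma}$) into the exponent and expanding yields
\[
\frac{(m_z-Np)^2-(m_{z'}-Np)^2}{2\sigma^2}=\frac{z^2-(z')^2}{2}+O\bc{\frac{|z|+|z'|}{\sigma}}+O(\sigma^{-2}),
\]
where the error is $o(1)$ because $|z|,|z'|\le z^*=\ln^2 n$ and $\sigma=\Omega(\sqrt n)$. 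Writing $(z^2-(z')^2)/2=-z(z'-z)-(z'-z)^2/2$, the dominant contribution is thus $O((z-z')^2)$ in the effective regime of application, giving the claim.

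The main obstacle is the bookkeeping of the cross term $-z(z'-z)$ in the exponent: for arbitrary $z,z'\in\brk{-z^*,z^*}$ this is not literally $O((z-z')^2)$, so one must either restrict to the region where $g$ is non-negligible (on which Lemma~\ref{Lemma_fexplicit} and Corollary~\ref{Lemma_FourierAux1} force $z,z'$ to be bounded) or observe that when this lemma is combined with Lemma~\ref{Lemma_Cquotient} inside~\eqref{eqgcont3}, the matching cross terms from the $C_d$-quotient and from the binomial-quotient cancel, leaving a genuine $O((z-z')^2)$ error for $g(z')/g(z)$.
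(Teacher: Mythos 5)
Your computation is correct, and methodologically it coincides with the paper's: the paper expands the two binomial coefficients by Stirling's formula, while you route the identical expansion through \Prop~\ref{Prop_Bin} via the identity $\bink{N}{k}=\pr\brk{\Bin(N,p)=k}\,p^{-k}(1-p)^{k-N}$. The two are interchangeable here, and your handling of the $(1-p)^{m_{z'}-m_z}$ factor (using $p\sigma=\Theta(n^{3/2-d})=o(1)$) and of the applicability condition $|m_z-Np|=O(\sigma\ln^2n)=o(\sigma^{4/3})$ is fine.

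The substantive point is the one you raise in your last paragraph, and you are right about it. The honest outcome of either computation is
\[
\bink{\bink{n}d}{m_{z'}}\bink{\bink{n}d}{m_z}^{-1}=(1+o(1))\,\exp\bc{\frac{z^2-z'^2}{2}}\,p^{m_z-m_{z'}},
\]
and $\frac{z^2-z'^2}{2}=z(z-z')-\frac{(z-z')^2}{2}$ is \emph{not} $O((z-z')^2)$ uniformly on $\brk{-z^*,z^*}$ (take $z=z^*$, $z'=z^*-\beta$). So the lemma as stated only holds for bounded $z$. In the paper's own proof the missing cross term is concealed in the replacement $p_z^{m_z-m_{z'}}\sim p^{m_z-m_{z'}}$ implicit in the last line of~(\ref{eqBinomquotient1}): the suppressed factor is $\bc{p_z/p}^{m_z-m_{z'}}=\exp\bc{z(z-z')(1-p)+o(1)}$, which is exactly the cross term. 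Your proposed repair is the correct one: the proof of \Lem~\ref{Lemma_Cquotient} makes the analogous replacement of $p_{z'}$ by $p$ in~(\ref{eqCquotient4}) and thereby conceals the matching factor $\exp\bc{z'(z-z')(1-p)}$ in the $C_d$-quotient. When the two quotients are multiplied in~(\ref{eqgcont3}) (where the binomial quotient enters inverted), the cross terms combine as $z'(z-z')-z(z-z')+\frac{(z-z')^2}{2}=-\frac{(z-z')^2}{2}$, giving $g(z')/g(z)=(1+o(1))\frac{P'}{P}\exp\bc{-\frac{(z-z')^2}{2}}$, which is all that \Lem~\ref{Lemma_gcont} requires. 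In short: your proof establishes the corrected form of the statement and correctly diagnoses both the defect in the form given here and why the downstream application is unaffected.
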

\begin{proof}
By Stirling's formula,
	\begin{eqnarray}
	\bink{\bink{n}d}{m_{z'}}\bink{\bink{n}d}{m_z}^{-1}&\sim&
		\bcfr{\bink{n}d}{m_{z'}}^{m_{z'}}\bcfr{\bink{n}d}{\bink{n}d-m_{z'}}^{\bink{n}d-m_{z'}}
			\brk{\bcfr{\bink{n}d}{m_{z}}^{m_{z}}\bcfr{\bink{n}d}{\bink{n}d-m_{z}}^{\bink{n}d-m_{z}}}^{-1}\nonumber\\
	&\sim&\frac{p_z^{m_z}}{p_{z'}^{m_{z'}}}\bc{1+\frac{m_{z'}}{\bink{n}d-m_{z'}}}^{\bink{n}d-m_{z'}}\bc{1+\frac{m_z}{\bink{n}d-m_z}}^{m_z-\bink{n}d}\nonumber\\
	&\sim&\frac{p_z^{m_z}}{p_{z'}^{m_{z'}}}\exp(m_{z'}-m_z)
		\sim p^{m_z-m_{z'}}\bcfr{p_z}{p_{z'}}^{m_{z'}}\exp(\sigma(z'-z)),\quad\mbox{where}
		\label{eqBinomquotient1}\\
	\bcfr{p_z}{p_{z'}}^{m_{z'}}&\sim&\bcfr{m_0+z\sigma}{m_0+z'\sigma}^{m_{z'}}
		\sim\exp\bc{(z-z')\sigma_0-\frac{(z-z')^2\sigma^2}{2m_{z'}}}\nonumber\\
		&=&\exp\bc{(z-z')\sigma_0-O(z-z')^2}
	\label{eqBinomquotient2}
	\end{eqnarray}
Combining~(\ref{eqBinomquotient1}) and~(\ref{eqBinomquotient2}), we obtain the assertion.
\qed\end{proof}
Plugging the estimates from \Lem s~\ref{Lemma_Cquotient} and~\ref{Lemma_Binomquotient} into~(\ref{eqgcont3}), we conclude that
$1-\alpha\leq g(z)/g(z')\leq1+\alpha$, provided that $|z-z'|<\beta$ for some small enough $\beta>0$,
thereby completing the proof of \Lem~\ref{Lemma_gcont}.

\subsection{Proof of \Lem~\ref{Lemma_Cquotient}}\label{Sec_Cquotient}

By symmetry, it suffices to prove that $P'\leq(1+\alpha/3)P+n^{-90}$.
To show this, we expose the edges of $H_d(n,p_{z'})$ in three rounds.
Let $\eps>0$ be a small enough number that remains fixed as $n\rightarrow\infty$.
Moreover, set $q_1=(1-\eps)p_{z'}$, and let $q_2\sim\eps p_{z'}$ be such that $q_1+q_2-q_1q_2=p_{z'}$.
Choosing $\eps>0$ sufficiently small, we can ensure that $\bink{n-1}{d-1}q_1>(d-1)^{-1}+\eps$.
Now, we construct $H_d(n,p_{z'})$ in three rounds as follows.
\begin{description}
\item[1st round.]
	Construct a random hypergraph $H_1$ with vertex set $V=\{1,\ldots,n\}$ by including each of the $\bink{n}d$ possible edges
	with probability $q_1$ independently.
	Let $G_1$ be the largest component of $H_1$.
\item[2nd round.]
	Let $H_2$ be the hypergraph obtained by adding  with probability $q_2$ independently
	each possible edge $e\not\in H_1$ that is not entirely contained in $G_1$ (i.e., $e\not\subset G_1$) to $H_1$.
	Let $G_2$ signify the largest component of $H_2$.
\item[3rd round.]
	Finally, obtain $H_3$ by adding each edge $e\not\in H_1$ such that $e\subset G_1$ with probability $q_2$ independently.
	Let $\mathcal{F}$ denote the set of edges added in this way.
\end{description}
Since for each of the $\bink{n}d$ possible edges the overall probability of being contained in $H_3$ is $q_1+(1-q_1)q_2=p_{z'}$,
$H_3$ is just a random hypergraph $H_d(n,p_{z'})$.
Moreover, as in the 3rd round we only add edges that fall completely into the component of $H_2$ that contains $G_1$,
we have $\order(H_d(n,p_{z'}))=\order(H_3)=\order(H_2)$.
Furthermore, $|\mathcal{F}|$ has a binomial distribution
	\begin{equation}\label{eqCquotient1X}
	|\mathcal{F}|=\Bin\bc{\bink{|G_1|}d-\size(H_1),q_2}.
	\end{equation}

To compare $P'$ and $P$, we make use of the local limit theorem for the binomially distributed $|\mathcal{F}|$ (\Prop~\ref{Prop_Bin}):
loosely speaking, we shall observe that most likely $G_1$ is contained in the largest component of $H_3$.
If this is indeed the case, then $\size(H_3)=|\mathcal{F}|+\size(H_2)$, so that
	\begin{eqnarray}
	\size(H_3)=m_{z'}-\mu&\Leftrightarrow&|\mathcal{F}|=m_{z'}-\mu-\size(H_2),\label{eqCquotient2X}\\
	\size(H_3)=m_{z}-\mu&\Leftrightarrow&|\mathcal{F}|=m_{z}-\mu-\size(H_2).\label{eqCquotient3X}
	\end{eqnarray}
Finally, since $\pr\brk{|\mathcal{F}|=m_{z'}-\mu-\size(H_2)}$ is ``close'' to
$\pr\brk{|\mathcal{F}|=m_{z}-\mu-\size(H_2)}$ if $|z-z'|$ is small (by the local limit theorem),
we shall conclude that $P'$ cannot exceed $P$ ``significantly''.

To implement the above sketch,
let $\QQQ$ be the set of all pairs $(\HHH_1,\HHH_2)$ of hypergraphs that satisfy the following three conditions.
\begin{description}
\item[Q1.] $\order(\HHH_2)=\nu$.
\item[Q2.] $\pr\brk{\size(H_3)=m_{z'}-\mu|H_1=\HHH_1,H_2=\HHH_2}\geq n^{-100}$.
\item[Q3.] The largest component of $\HHH_2$ contains the largest component of $\HHH_1$.
\end{description}
The next lemma shows that the processes such that $(H_1,H_2)\in\QQQ$ constitute the dominant contribution.

\begin{lemma}\label{Lemma_CquotientI}
Letting $P''=\pr\brk{\size(H_3)=m_{z'}-\mu\wedge (H_1,H_2)\in\QQQ}$, we have $P'\leq P''+n^{-99}$.
\end{lemma}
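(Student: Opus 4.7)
The plan is to decompose the difference $P'-P''$ according to which of the three conditions Q1--Q3 defining $\QQQ$ can fail when $\order(H_3)=\nu$ and $\size(H_3)=m_{z'}-\mu$. A key observation that is used repeatedly in the setup is that, because the third round only adds edges contained in $G_1$, which lies inside a single component of $H_2$, we have $\order(H_3)=\order(H_2)$. Consequently, once we restrict to the event $\{\order(H_3)=\nu\}$, condition Q1 is automatic and the ``bad'' contribution splits as
\[
P'-P''\;\leq\;\pr\brk{\size(H_3)=m_{z'}-\mu\wedge\text{Q2 fails}}
+\pr\brk{\text{Q3 fails}}.
\]

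The Q2 term is handled by a direct union bound: since Q2 is a deterministic property of the pair $(H_1,H_2)$, conditioning on $(H_1,H_2)=(\HHH_1,\HHH_2)$ violating Q2 gives a conditional probability $<n^{-100}$ of the size hitting $m_{z'}-\mu$ in the third round, so summing over these pairs yields a contribution of at most $n^{-100}$. This step is essentially bookkeeping.

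The main work lies in bounding $\pr\brk{\text{Q3 fails}}$, i.e.\ ruling out the event that the largest component of $H_2$ fails to contain the largest component of $H_1$. Here I would apply \Thm~\ref{Thm_global} \emph{twice}: once to $H_1=H_d(n,q_1)$, for which $\bink{n-1}{d-1}q_1>(d-1)^{-1}+\eps$ by the choice of $\eps$, and once to $H_2=H_d(n,p_{z'})$, whose average degree is also supercritical. The theorem guarantees that each of $H_1$ and $H_2$ has, with probability $\geq 1-n^{-100}$, a unique component of order $\Omega(n)$, while every other component has order $\leq\ln^2 n$. On the joint good event (which has probability $\geq 1-2n^{-100}$) the giant $G_1$ of $H_1$ satisfies $|G_1|=\Omega(n)\gg\ln^2 n$, and since $H_1\subseteq H_2$ it is contained in some component of $H_2$; that component is too large to be anything but the unique giant of $H_2$. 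Hence Q3 holds on this event, giving $\pr\brk{\text{Q3 fails}}\leq 2n^{-100}$.

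Adding the two contributions yields $P'-P''\leq 3n^{-100}\leq n^{-99}$ for $n$ large. The only delicate point is making sure that the three-round exposure is truly equivalent to sampling $H_d(n,p_{z'})$; this has already been observed in the construction ($q_1+q_2-q_1q_2=p_{z'}$), so no further work is needed there. I expect no serious obstacle: both ingredients (the Q2 conditional bound and the two-scale giant-component comparison of \Thm~\ref{Thm_global}) are essentially off-the-shelf, and the decomposition is dictated directly by the definition of $\QQQ$.
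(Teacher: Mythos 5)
Your proof is correct and follows essentially the same route as the paper's: Q1 is absorbed by the observation $\order(H_3)=\order(H_2)$, the Q2 failure is killed by the very bound in its definition, and the Q3 failure is ruled out via \Thm~\ref{Thm_global} applied to the giants of $H_1$ and of the final hypergraph. The one small slip is asserting $H_2=H_d(n,p_{z'})$ --- it is $H_3$ that has this distribution --- but since round 3 only adds edges inside a single component of $H_2$, the component partitions of $H_2$ and $H_3$ coincide, so the application of \Thm~\ref{Thm_global} goes through exactly as in the paper.
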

\begin{proof}
Let $\mathcal{R}$ signify the set of all pairs $(\HHH_1,\HHH_2)$ such that {\bf Q1} is satisfied.
Since $H_3=H_d(n,p_{z'})$, we have
	$P'=\pr\brk{\size(H_3)=m_{z'}-\mu\wedge(H_1,H_2)\in\mathcal{R}}.$
Therefore, letting $\bar\QQQ_2$ (resp.\ $\bar\QQQ_3$)
denote the set of all $(\HHH_1,\HHH_2)\in\mathcal{R}$ that violate {\bf Q2} (resp.\ {\bf Q3}),
we have
	\begin{eqnarray}
	P'-P''&\leq&\pr\brk{\size(H_3)=m_{z'}-\mu\wedge (H_1,H_2)\in\mathcal{R}\setminus\QQQ}\nonumber\\
		&\leq&\pr\brk{\size(H_3)=m_{z'}-\mu|(H_1,H_2)\in\bar\QQQ_2}
			+\pr\brk{(H_1,H_2)\in\bar\QQQ_3}\nonumber\\
		&\stacksign{\bf Q2}{\leq}&n^{-100}+\pr\brk{(H_1,H_2)\in\bar\QQQ_3}.
			\label{eqCquotient4X}
	\end{eqnarray}
Furthermore, if $(H_1,H_2)\in\bar\QQQ_3$, then either $H_1$ does not feature a component of order $\Omega(n)$,
or $H_2$ has two such components.
Since $\bink{n-1}{d-1}q_1>(d-1)^{-1}+\eps$ due to our choice of $\eps>0$, \Thm~\ref{Thm_global} entails
that the probability of either event is $\leq n^{-100}$.
Thus, the assertion follows from~(\ref{eqCquotient4X}).
\qed\end{proof}

Finally, we can compare $P$ and $P''$ as follows.

\begin{lemma}\label{Lemma_CquotientII}
We have $P''\leq(1+\alpha/3)P$.
\end{lemma}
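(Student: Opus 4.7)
The plan is to decompose $P''$ by conditioning on the outcome of the first two exposure rounds, to pair each term with a matching contribution to $P$, and then to compare the two expressions through the local limit theorem for the binomial distribution of $|\mathcal{F}|$.

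I would first observe that by {\bf Q1} we have $\order(\HHH_2)=\nu$, while by {\bf Q3} the largest component $C$ of $\HHH_2$ contains $G_1$. Hence every edge added in round 3 lies inside $C$, so the vertex sets of the components of $H_3$ coincide with those of $\HHH_2$, and $\size(H_3)=\size(\HHH_2)+|\mathcal{F}|$. Thus, conditional on $(H_1,H_2)=(\HHH_1,\HHH_2)\in\QQQ$, the event $\{\size(H_3)=m_{z'}-\mu\}$ is equivalent to $\{|\mathcal{F}|=k_1\}$ with $k_1=m_{z'}-\mu-\size(\HHH_2)$; moreover, $|\mathcal{F}|$ is independent of $(H_1,H_2)$ and distributed as $\Bin(N_1,q_2)$ with $N_1=\bink{|G_1|}{d}-\size(\HHH_1)$, by~(\ref{eqCquotient1X}). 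Summing gives
	$$P''=\sum_{(\HHH_1,\HHH_2)\in\QQQ}\pr\brk{(H_1,H_2)=(\HHH_1,\HHH_2)}\cdot\pr\brk{\Bin(N_1,q_2)=k_1}.$$
Running the same equivalence with $m_z$ in place of $m_{z'}$, i.e.\ with $k_0=m_z-\mu-\size(\HHH_2)$, and noting that the events summed over are contained in $\{\order(H_3)=\nu\wedge\size(H_3)=m_z-\mu\}$, yields the matching lower bound
	$$P\geq\sum_{(\HHH_1,\HHH_2)\in\QQQ}\pr\brk{(H_1,H_2)=(\HHH_1,\HHH_2)}\cdot\pr\brk{\Bin(N_1,q_2)=k_0}.$$

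It then suffices to prove the term-by-term inequality
	$$\pr\brk{\Bin(N_1,q_2)=k_1}\leq(1+\alpha/3)\pr\brk{\Bin(N_1,q_2)=k_0}\qquad\mbox{for every }(\HHH_1,\HHH_2)\in\QQQ.$$
This is where {\bf Q2} enters: the hypothesis $\pr\brk{\Bin(N_1,q_2)=k_1}\geq n^{-100}$ forces $k_1$ into the central range of the binomial, so \Prop~\ref{Prop_Bin} applies to $k_1$ and, because $|k_0-k_1|=|m_z-m_{z'}|\leq\beta\sigma+1$ is much smaller than the standard deviation $\sqrt{V}$ of $\Bin(N_1,q_2)$ (note $V=N_1q_2(1-q_2)=\Theta(\eps n)$), also to $k_0$. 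The Gaussian approximation then reduces the ratio to $\exp\bc{\frac{(k_0-N_1q_2)^2-(k_1-N_1q_2)^2}{2V}}(1+o(1))$, whose exponent is bounded by $|m_z-m_{z'}|\cdot(2|k_1-N_1q_2|+|m_z-m_{z'}|)/(2V)$.

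The main obstacle is to make this exponent uniformly smaller than $\log(1+\alpha/3)$. I would handle this by splitting $\QQQ$ into a ``bulk'' part $\QQQ_0$, on which $|k_1-N_1q_2|=O(\sqrt n)$, and its complement $\QQQ\setminus\QQQ_0$. On $\QQQ_0$ the Gaussian exponent is $O(\beta^2/\eps)$, which is below $\log(1+\alpha/3)$ once $\beta$ is small in terms of $\alpha$ and the (fixed) $\eps$; this handles the bulk contribution. For the exceptional part $\QQQ\setminus\QQQ_0$, Chernoff-type tail bounds on $\Bin(N_1,q_2)$ show that its total contribution to $P''$ is at most $n^{-\omega(1)}$, which is negligible compared to $P\geq n^{-O(1)}$ (a consequence of \Lem~\ref{Lemma_CquotientI}, our assumption $g(z')\geq n^{-30}$, and \Prop~\ref{Prop_Bin} applied to the binomial number of edges). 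Combining the two contributions yields $P''\leq(1+\alpha/3)P$, as desired.
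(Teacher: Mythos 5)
Your overall strategy is the same as the paper's: condition on $(H_1,H_2)=(\HHH_1,\HHH_2)\in\QQQ$, use \textbf{Q3} to identify $\size(H_3)=\size(H_2)+|\mathcal{F}|$ so that both events become point events for the binomial variable $|\mathcal{F}|$, lower-bound $P$ by the corresponding sum over $\QQQ$, and reduce everything to a term-by-term comparison of $\pr\brk{\Bin(N_1,q_2)=k_1}$ with $\pr\brk{\Bin(N_1,q_2)=k_0}$, where $|k_1-k_0|\leq\beta\sigma+1$ and \textbf{Q2} together with the Chernoff bound localizes $k_1$ near $N_1q_2$. Up to that point your argument is correct and coincides with the paper's. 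You are also right that the crux is the cross term $|k_1-N_1q_2|\cdot|k_1-k_0|/V$ in the Gaussian ratio; the paper's own proof is terse at exactly this spot (it extracts only $|k_1-N_1q_2|\leq n^{0.51}$ from \textbf{Q2}, which taken literally yields an exponent of order $\beta n^{0.01}$ rather than $O(\beta)$).

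However, the repair you propose does not close this gap, because the two requirements you impose on the threshold defining $\QQQ_0$ are incompatible. To make the bulk exponent at most $\log(1+\alpha/3)$ you need $|k_1-N_1q_2|\leq T$ with $T=O(\eps\alpha\sqrt{n}/\beta)$, i.e.\ $T=O(\sqrt n)$ for fixed $\alpha,\beta,\eps$. But then the contribution of $\QQQ\setminus\QQQ_0$ to $P''$ is bounded only by $\pr\brk{\abs{|\mathcal{F}|-N_1q_2}>T}\leq 2\exp(-\Omega(T^2/(\eps n)))=\exp(-O_{\alpha,\beta,\eps}(1))$ --- a constant, or at best $O(n^{-1/2})$ if one also uses the pointwise bound $\pr\brk{\Bin(N_1,q_2)=k_1}=O(n^{-1/2})$ --- which is \emph{not} $n^{-\omega(1)}$ and is not negligible against $P$, since $P$ is a bivariate point probability of order at most $O(n^{-1})$ (and possibly as small as about $n^{-32}$ in the regime where the lemma is invoked, cf.~(\ref{eqgcont0}) and~(\ref{eqCquotient55})). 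Conversely, pushing $T$ up to $K\sqrt{n\log n}$, where the Chernoff tail does become $n^{-\omega(1)}$, re-inflates the bulk exponent to $\Theta(K\beta\sqrt{\log n}/\eps)\to\infty$. So the final step of your plan (``its total contribution to $P''$ is at most $n^{-\omega(1)}$'') is where the argument breaks: some further idea is needed for the pairs with $|k_1-N_1q_2|$ between $C\sqrt n$ and $C\sqrt{n\log n}$, e.g.\ comparing their contributions to $P''$ and to $P$ directly instead of discarding them.
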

\begin{proof}
Consider $(\HHH_1,\HHH_2)\in\QQQ$ and let us condition on the event $(H_1,H_2)=(\HHH_1,\HHH_2)$.
Let $\Delta=m_z-\mu-\size(H_2)$, $\Delta'=m_z'-\mu-\size(H_2)$.
We claim that
	\begin{equation}\label{eqCquotient5X}
	\abs{\brk{\bink{\nu}d-\size(H_1)}q_2-\Delta'}\leq n^{0.51};
	\end{equation}
for if $\abs{\brk{\bink{\nu}d-\size(H_1)}q_2-\Delta'}>n^{0.51}$, then
the Chernoff bound~(\ref{eqChernoff}) entails that
	\begin{eqnarray*}
	\pr\brk{\size(H_3)=m_{z'}-\mu|(H_1,H_2)=(\HHH_1,\HHH_2)}
		&\stacksign{(\ref{eqCquotient2X})}{=}&
			\pr\brk{|\mathcal{F}|=\Delta'|(H_1,H_2)=(\HHH_1,\HHH_2)}\\
		&\stacksign{(\ref{eqCquotient1X})}{\leq}&\exp\brk{-n^{0.01}}<n^{-100},
	\end{eqnarray*}
in contradiction to {\bf Q2}.
Thus, if $|z-z'|<\beta$ for a small enough $\beta>0$, then \Prop~\ref{Prop_Bin} yields
	\begin{equation}\label{eqCquotient6X}
	\pr\brk{|\mathcal{F}|=\Delta'|(H_1,H_2)=(\HHH_1,\HHH_2)}\leq
		(1+\alpha/3)\pr\brk{|\mathcal{F}|=\Delta|(H_1,H_2)=(\HHH_1,\HHH_2)},
	\end{equation}
because $|\Delta'-\Delta|=|z'-z|\sigma$, and $\Var(|\mathcal{F}|)\sim\bink{\nu}dq_2=\Omega(\sigma^2)$.
Since~(\ref{eqCquotient6X}) holds for all $(\HHH_1,\HHH_2)\in\QQQ$, the assertion follows.
\qed\end{proof}
Finally, \Lem~\ref{Lemma_Cquotient} is an immediate consequence of \Lem s~\ref{Lemma_CquotientI} and~\ref{Lemma_CquotientII}.

\subsection{Proof of \Lem~\ref{Lemma_fg}}\label{Sec_fg}

Set $m_-=m_0-z^*\sigma$, $m_+=m_0+z^*\sigma$,
and let
	\begin{eqnarray*}
	P(m)=n\pr\brk{\order(\hnm)=\nu\wedge\bar\size(\hnm)=\bar\mu},&&
	B_z(m)=\pr\brk{\Bin\bc{\bink{n}d,p_z}=m}.
	\end{eqnarray*}
Then for all $z\in\brk{-z^*,z^*}$ we have
	\begin{eqnarray*}\nonumber
	f(z)&=&\sum_{m=0}^{\bink{n}d}P(m)B_z(m)\leq n\cdot\pr\brk{\Bin\bc{\bink{n}d,p_z}\not\in\brk{m_-,m_+}}+
		\sum_{m_-\leq m\leq m_+}P(m)B_z(m)\\
		&\stacksign{(\ref{eqChernoff})}{\leq}&n^{-100}+\sum_{m_-\leq m\leq m_+}P(m)B_z(m).
	\end{eqnarray*}
because $0\leq P(m)\leq n$.
Hence,
	\begin{equation}\label{eqfg1}
	f(z)=O(n^{-100})+\sum_{m_-\leq m\leq m_+}P(m)B_z(m).
	\end{equation}

Now, to approximate the sum on the r.h.s.\ of~(\ref{eqfg1}) by the convolution $g*\phi(z)$,
we replace the sum by an integral.
To this end, we decompose the interval $J=\brk{m_-,m_+}$ into $k$ subsequent pieces $J_1,\ldots,J_k$
of lengths in-between $\frac{\sigma}{2\log n}$ and $\frac{\sigma}{\log n}$.
Then \Lem~\ref{Lemma_gcont} entails that
	\begin{equation}\label{eqfg2}
	P(m)=(1+o(1))P(m')+O(n^{-20})
	\qquad\mbox{for all $m,m'\in J_i$ and all $1\leq i\leq k$}.
	\end{equation}
Moreover, \Prop~\ref{Prop_Bin} yields that 
	\begin{equation}\label{eqfg3}
	B_z(m)\sim\frac1{\sqrt{2\pi}\sigma}\exp\bc{-\frac{(m-m_z)^2}{2\sigma^2}}
		\qquad\mbox{for all $m,m'\in J_i$ and all $1\leq i\leq k$}.
	\end{equation}
Further, let $I_i=\{\sigma^{-1}(x-m_0):x\in J_i\}$ and set $M_i=\min J_i\cap\ZZ$.
Combining~(\ref{eqfg2}) and~(\ref{eqfg3}), we obtain
	\begin{eqnarray}
	\sum_{m\in J_i}P(m)B_z(m)&=&
		O(n^{-18})+(1+o(1))P(M_i)\sum_{m\in J_i}B_z(m)\nonumber\\
		&=&(1+o(1))P(M_i)\int_{I_i}\phi(\zeta-z)d\zeta+O(n^{-18})\nonumber\\
		&=&(1+o(1))\int_{I_i}P(m_{\zeta})\phi(\zeta-z)d\zeta+O(n^{-18}).\label{eqfg4}
			\end{eqnarray}
As $|\zeta|\leq z^*$ for all $\zeta\in I_i$, we have $P(m_{\zeta})=g(\zeta)$.
Therefore, (\ref{eqfg4}) yields
	\begin{equation}\label{eqfg5}
	\sum_{m\in J_i}P(m)B_z(m)=(1+o(1))\int_{I_i}g(\zeta)\phi(\zeta-z)d\zeta+O(n^{-18}).
	\end{equation}
Summing~(\ref{eqfg5}) for $i=1,\ldots,k$, we get
	\begin{eqnarray}\nonumber
	f(z)&\stacksign{(\ref{eqfg1})}{=}&
		O(n^{-18})+(1+o(1))\sum_{i=1}^k\int_{I_i}g(\zeta)\phi(\zeta-z)d\zeta\\
		&=&O(n^{-18})+(1+o(1))\int_{-z^*}^{z^*}g(\zeta)\phi(\zeta-z)d\zeta.
		\label{eqfg6}
	\end{eqnarray}
As $f(\zeta)=g(\zeta)=0$ if $|\zeta|>z^*$,
the assertion follows from~(\ref{eqfg6}).

\subsection{Proof of \Lem~\ref{Lemma_fh}}\label{Sec_fh}

\begin{lemma}\label{Lemma_chibound}
We have $\chi<1$.
\end{lemma}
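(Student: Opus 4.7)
The plan is to reduce $\chi<1$ to a one-variable inequality in $\rho\in(0,1)$ by plugging in the definitions, and then to verify that inequality by elementary calculus. Since $c=\bink{n-1}{d-1}p$ is bounded while $n\to\infty$, we have $p\to 0$, so $\sigma^2=\bink{n}{d}p(1-p)=(1+o(1))cn/d$. Inserting this, together with the explicit formula $\sigma_\order^2(1-c(d-1)\rho^{d-1})^2=\rho[1-\rho+c(d-1)(\rho-\rho^{d-1})]n$ from~\eqref{eq:defsigmaN}, into the definition $\lambda=d\sigma(\rho^d-\rho)/[\sigma_\order(1-c(d-1)\rho^{d-1})]$, and using $(\rho^d-\rho)^2=\rho^2(1-\rho^{d-1})^2$, the factors of $n$ cancel and I obtain
\[
\lambda^2=(1+o(1))\cdot\frac{dc\rho(1-\rho^{d-1})^2}{1-\rho+c(d-1)(\rho-\rho^{d-1})}.
\]
The denominator is strictly positive, since $\rho<1$ and $c(d-1)\rho^{d-1}<1$ by~\eqref{eqrhoupper}. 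Hence $\chi=\lambda^2+\rho^d<1$ reduces, after clearing the denominator, to a polynomial inequality in $\rho$ and $c$.

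Collecting the $c$-terms, this inequality rearranges to
\[
c\rho(1-\rho^{d-1})\bigl[1-d\rho^{d-1}+(d-1)\rho^d\bigr]<(1-\rho)(1-\rho^d).
\]
Two polynomial identities then do most of the work: $1-d\rho^{d-1}+(d-1)\rho^d=(1-\rho)^2\sum_{k=0}^{d-2}(k+1)\rho^k$ (which follows, e.g., from differentiating $(1-\rho^d)/(1-\rho)$), and $1-\rho^d=(1-\rho)\sum_{k=0}^{d-1}\rho^k$. Eliminating $c$ by means of the transcendental equation~\eqref{eqCOMV} in the form $c(1-\rho^{d-1})=-\ln\rho$ and cancelling the common factor $(1-\rho)^2>0$, what remains is
\[
(-\ln\rho)\sum_{j=1}^{d-1}j\rho^{j}<\sum_{k=0}^{d-1}\rho^{k}\qquad(0<\rho<1).
\]

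To close the argument I would invoke the elementary bound $-\ln\rho<(1-\rho)/\rho$ on $(0,1)$, which follows from the fact that $g(\rho)=-\rho\ln\rho-(1-\rho)$ satisfies $g(1)=0$ and $g'(\rho)=-\ln\rho>0$. Applied to the left-hand side it yields
\[
(-\ln\rho)\sum_{j=1}^{d-1}j\rho^{j}<(1-\rho)\sum_{j=1}^{d-1}j\rho^{j-1}=\sum_{k=0}^{d-2}\rho^{k}-(d-1)\rho^{d-1}<\sum_{k=0}^{d-1}\rho^{k},
\]
where the equality is a short telescoping computation and the final inequality is immediate since $-(d-1)\rho^{d-1}<\rho^{d-1}$. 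The only non-routine aspect of the plan is the bookkeeping that brings the algebra into the clean form in which $(1-\rho)^2$ factors out and the $c$-dependence can be removed via~\eqref{eqCOMV}; once that reduction is in place, the remaining inequality is standard one-variable calculus with no further probabilistic input required.
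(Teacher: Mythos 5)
Your overall strategy---computing $\lambda^2$ explicitly and verifying $\lambda^2+\rho^d<1$ by hand---is entirely different from the paper's, which argues softly: $F$ is (a constant times) a Gaussian density of variance $\chi^{-1}$, Lemma~\ref{Lemma_fg} gives $F\approx g*\phi$ in $L_1$, and since the variance of a convolution is the sum of the variances, $\chi^{-1}=\Var(\|g\|_1^{-1}g)+1>1$, the last inequality because $g$ is not a point mass (Lemma~\ref{Lemma_gcont}). Your formula for $\lambda^2$ is correct, and the reduction to clearing the (positive) denominator is sound, but the step ``collecting the $c$-terms, this inequality rearranges to\dots'' contains an algebra error that invalidates the rest. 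The correct left-hand side after moving the $c$-term is
\[
c\left[\,d\rho(1-\rho^{d-1})^2-(d-1)(\rho-\rho^{d-1})(1-\rho^d)\right],
\]
whereas you replace the bracket by $\rho(1-\rho^{d-1})\bigl[1-d\rho^{d-1}+(d-1)\rho^d\bigr]$. These are not equal: for $d=2$ the correct bracket is $2\rho(1-\rho)^2$ (the second term vanishes since $\rho-\rho^{d-1}=0$), while yours is $\rho(1-\rho)^3$, smaller by the factor $(1-\rho)/2<1$. Consequently the inequality you go on to prove is strictly \emph{weaker} than the one required, and establishing it does not yield $\chi<1$.

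The gap is not merely cosmetic, because the true inequality is tight as $c\downarrow(d-1)^{-1}$ (i.e.\ $\rho\uparrow1$). For $d=2$ the correct reduction is $2c\rho<1+\rho$, which with $c(1-\rho)=-\ln\rho$ becomes $-2\rho\ln\rho<1-\rho^2$; both sides agree to second order at $\rho=1$, so your crude bound $-\ln\rho<(1-\rho)/\rho$ is not sharp enough to close even the corrected computation (it only gives $2c\rho<2$, not $2c\rho<1+\rho$). One would instead need, e.g., $\ln(1/\rho)<\sinh\ln(1/\rho)=\frac{1-\rho^2}{2\rho}$, and an analogous sharpening for $d>2$. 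Two further points to tidy up if you pursue this route: (i) the $(1+o(1))$ in your formula for $\lambda^2$ means you must show the limiting inequality holds uniformly with a margin over the compact set $\JJJ$, since the lemma is used to divide by $1-\chi$ and $\chi-\chi^2$; (ii) your justification that the denominator is positive should simply be that both of its terms are nonnegative and $1-\rho>0$, rather than the appeal to $c(d-1)\rho^{d-1}<1$.
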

\begin{proof}
We can write the function $F(z)$ from \Lem~\ref{Lemma_fexplicit} as
$F(z)=\xi_1\exp(-\frac{\chi(z-\xi_2)^2}{2})$ with suitable coefficients $\xi_1,\xi_2$.
Hence, the variance of the probability distribution $\|F\|_1^{-1}F$ is $\chi^{-1}$.
To bound this from below, note that $\|F-g*\phi\|_1=o(1)$ by \Lem~\ref{Lemma_fg}.
Moreover, as the convolution of two probability measures is a probability measure, we have $\|g\|_1\sim\|F\|_1$.
Therefore,
	\begin{equation}\label{eqchibound}
	\chi^{-1}=\Var(\|F\|_1^{-1}F)\sim\Var(\|g\|_1^{-1}(g*\phi))=\Var(\|g\|_1^{-1}g)+1.
	\end{equation}
Finally, \Lem~\ref{Lemma_gcont} implies that $\Var(\|g\|_1^{-1}g)>0$, and thus the assertion follows from~(\ref{eqchibound}).
\qed\end{proof}

Now, we shall see that $h*\phi=F$, where $F$ is the function from \Lem~\ref{Lemma_fexplicit}.
Then the assertion follows directly from \Lem~\ref{Lemma_fexplicit}.
To compute $h*\phi$, let
	\begin{eqnarray*}
	\eta_1&=&\frac{n}{2\pi\rho^{d/2}\sqrt{1-\chi}\sigma_\order\sigma}\exp\bc{-\frac{\chi\theta-\kappa^2}{2\chi}},\
	\eta_2=-\kappa/\chi,\ \eta_3=\chi^{-1}-1,\mbox{ and }\eta_4=\eta_1\sqrt{2\pi \eta_3};
	\end{eqnarray*}
note that the definition of $\eta_4$ is sound due to \Lem~\ref{Lemma_chibound}.
Then 
$h(z)=\eta_4\phi_{\eta_2,\eta_3}$.
Hence, $h*\phi=\eta_4\phi_{\eta_2,\eta_3+1}$.
Finally, an elementary but tedious computation shows that $\eta_4\phi_{\eta_2,\eta_3+1}=F$.

\subsection{Proof of \Thm~\ref{Thm_Hnplocal}}\label{Sec_Hnplocal}

Suppose that $\nu=(1-\rho)n+x$ and $\mu=(1-\rho^d)m_0+y$, where $n^{-\frac12}\bink{x}y\in\III$.
Let $\alpha>0$ be arbitrarily small but fixed, and let $\Gamma=\Gamma(\alpha)>0$ be a sufficiently large number.
Moreover, set 
$\mathcal{P}=\pr\brk{\order(\hnp)=\nu\wedge\size(\hnp)=\mu},$ and let
	$$\mathcal{B}(m)=\pr\brk{\Bin\bc{\bink{n}d,p}=m},\ 
		\mathcal{Q}(m)=\pr\brk{\order(\hnm)=\nu\wedge\size(\hnm)=\mu}.$$
Then, letting $m$ range over non-negative integers, we define
	$$
	S_1=\hspace{-3mm}\sum_{m:|m-m_0|\leq\Gamma\sigma}\hspace{-3mm}\mathcal{B}(m)\mathcal{Q}(m),\ 
	S_2=\hspace{-3mm}\sum_{m:\Gamma\sigma<|m-m_0|\leq L\sqrt{n}}\hspace{-3mm}\mathcal{B}(m)\mathcal{Q}(m),\
	S_3=\hspace{-3mm}\sum_{m:|m-m_0|>L\sqrt{n}}\hspace{-3mm}\mathcal{B}(m)\mathcal{Q}(m),
	$$
so that we can rewrite~(\ref{eqRueckschritt0}) as
	\begin{equation}\label{eqRueckschritt1}
	\mathcal{P}=S_1+S_2+S_3.
	\end{equation}
We shall estimate the three summands $S_1,S_2,S_3$ separately.

Let us first deal with $S_3$.
As$\bink{n}dp=O(n)$, the Chernoff bound~(\ref{eqChernoff}) entails that
	$\sum_{m:|m-m_0|>L\sqrt{n}}\mathcal{B}(m)\leq n^{-2}.$
Since, $0\leq\mathcal{Q}(m)\leq 1$, this implies
	\begin{equation}\label{eqRueckschritt2}
	S_3\leq n^{-2}.
	\end{equation}

To bound $S_2$, we need the following lemma.
\begin{lemma}\label{Lemma_RueckschrittAux}
There is a constant $K'>0$ such that
	$\mathcal{Q}(m)\leq K'n^{-1}$ for all $m$ such that $|m-m_0|\leq L\sqrt{n}$.
\end{lemma}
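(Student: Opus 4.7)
The plan is to obtain the bound as a direct uniform consequence of \Thm~\ref{Thm_Hnmlocal}: that theorem furnishes, for each admissible $m$, a bivariate Gaussian upper envelope whose peak height is $O(1/\sqrt{\det\Sigma_m})=O(n^{-1})$, so pointwise $\mathcal{Q}(m)=O(n^{-1})$. The task is to verify that the hypotheses of \Thm~\ref{Thm_Hnmlocal} hold uniformly in $m$ with $|m-m_0|\leq L\sqrt{n}$, and that the determinant of the asymptotic covariance is bounded below by $\Omega(n^2)$ uniformly.

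First I would observe that for every such $m$ the average degree $c_m=dm/n$ lies in a slightly enlarged compact set $\mathcal{J}'\subset((d-1)^{-1},\infty)$ that still contains $\mathcal{J}$ in its interior; this is immediate from $|c_m-c|=O(n^{-1/2})$. Implicit differentiation of the transcendental equation~(\ref{eqCOMV}) shows that $c\mapsto\rho(c)$ is $C^1$ on $\mathcal{J}'$, whence the associated solution $\rho_m$ to (\ref{eqCOMV}) at $c=c_m$ satisfies $\rho_m=\rho+O(n^{-1/2})$, and consequently
$$x_m := \nu-(1-\rho_m)n = x+O(\sqrt{n}), \qquad y_m := \mu-(1-\rho_m^d)m = y+O(\sqrt{n}).$$
Since $n^{-1/2}\bink{x}{y}\in\III$ by assumption, the shifted pairs $n^{-1/2}\bink{x_m}{y_m}$ all lie in some enlarged compact $\mathcal{I}'\supset\III$ uniformly in $m$. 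Applying \Thm~\ref{Thm_Hnmlocal} with $\mathcal{I}',\mathcal{J}'$ and, say, $\delta=1$ yields, for $n$ sufficiently large independent of $m$, the bound $\mathcal{Q}(m)\leq 2\,Q_m(x_m,y_m)$, where $Q_m$ denotes the bivariate Gaussian density associated with the parameters at $c_m$.

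It then remains to bound the Gaussian peak height $1/(2\pi\sqrt{\tsigma_{\order}^2\tsigma_{\size}^2-\tsigma_{\order\size}^2})$ by $O(n^{-1})$ uniformly. Each of $\tsigma_{\order}^2,\tsigma_{\size}^2,\tsigma_{\order\size}$ is $n$ times a continuous function of $(c_m,\rho_m)$, and the density formula in \Thm~\ref{Thm_Hnmlocal} makes sense only when $\tsigma_{\order}^2\tsigma_{\size}^2-\tsigma_{\order\size}^2>0$; by continuity and compactness of $\mathcal{J}'$ this quantity is therefore bounded below by $\kappa n^2$ for some $\kappa>0$. Hence $Q_m(x_m,y_m)\leq\frac{1}{2\pi\sqrt{\kappa}\,n}$, which gives $\mathcal{Q}(m)\leq K'n^{-1}$ with $K'=(\pi\sqrt{\kappa})^{-1}$. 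The only mildly subtle point is the uniform positivity of $\tsigma_{\order}^2\tsigma_{\size}^2-\tsigma_{\order\size}^2$: given the non-degeneracy of the limiting bivariate Gaussian guaranteed by \Cor~\ref{Cor_Hnmlocal} this is automatic, but it can also be verified directly from the explicit formulas for the $\tsigma$'s as polynomials in $(c,\rho)$ together with the bound $c(d-1)\rho^{d-1}<1$ of~(\ref{eqrhoupper}).
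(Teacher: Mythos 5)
There is a genuine gap, and it sits exactly where you flag the step as ``immediate''. The constant $L$ in the lemma cannot be a fixed constant: it is chosen so that the Chernoff bound gives $\sum_{m:|m-m_0|>L\sqrt n}\mathcal{B}(m)\leq n^{-2}$, which forces $L=L(n)\to\infty$ (at least of order $\sqrt{\ln n}$). For $m$ with $|m-m_0|$ of order $L\sqrt n$ you then have $|c_m-c|=O(Ln^{-1/2})$, not $O(n^{-1/2})$, and hence $x_m-x$ and $y_m-y$ are of order $L\sqrt n=\omega(\sqrt n)$. Consequently $n^{-1/2}\bink{x_m}{y_m}$ escapes every \emph{fixed} compact set $\mathcal{I}'$, and \Thm~\ref{Thm_Hnmlocal} --- whose hypotheses require the normalized deviation to lie in a compact set chosen before $n_0$ --- gives no bound whatsoever for those $m$. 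Your argument covers only $|m-m_0|\leq\Gamma\sigma$ for constant $\Gamma$, i.e.\ the range of $S_1$, whereas the lemma is invoked precisely to control the intermediate range $\Gamma\sigma<|m-m_0|\leq L\sqrt n$ in $S_2$.

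The paper avoids this by not using the bivariate local limit theorem at all here. It writes $\mathcal{Q}(m)=g_{n,\nu,\bar\mu_m}(z)/n$ with $z=\sigma^{-1}(m-m_0)$, checks via \Lem~\ref{Lemma_rhoz} that the recentered deviations stay within the much weaker standing assumption $|x|,|y|\leq\sqrt n\ln n$ of \Sec~\ref{Sec_NMlocal}, and then combines \Cor~\ref{Lemma_FourierAux1} ($g\leq Kf+O(n^{-18})$) with part 1 of \Lem~\ref{Lemma_fexplicit} ($f\leq\gamma_0$ everywhere), which ultimately rests on the \emph{uniform} bound $\pr\brk{\order(\hnp)=\nu}=O(n^{-1/2})$ valid for all $\nu$ from \Thm~\ref{Thm_Nlocal}(1). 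To repair your argument you would need an analogous uniform (non-local) upper bound on $\pr\brk{\order(\hnm)=\nu\wedge\size(\hnm)=\mu}$ valid outside compact windows, which \Thm~\ref{Thm_Hnmlocal} does not supply. The remainder of your argument (uniform positivity of $\tsigma_\order^2\tsigma_\size^2-\tsigma_{\order\size}^2$ of order $n^2$ by continuity and compactness) is fine as far as it goes.
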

\begin{proof}
Let $z=\sigma^{-1}(m-m_0)$, so that $m=m_z$.
Then $|z|=O(L)$, because $\sigma=\Omega(\sqrt{n})$.
In addition, let $\bar\mu_m=m-\mu$, so that
	\begin{equation}\label{eqRueckschritt3}
	\mathcal{Q}(m)=
		\pr\brk{\order(\hnm)=\nu\wedge\bar\size(\hnm)=\bar\mu_m}
		= g_{n,\nu,\bar\mu_m}(z)/n.
	\end{equation}
Let $c_z=dm_z/n=dm/n$.
Then by \Lem~\ref{Lemma_rhoz}, the solution $0<\rho_z<1$ to the equation
$\rho_z=\exp(c_z(\rho_z^{d-1}-1))$ satisfies $|\rho_z-\rho|=O(zn^{-\frac12})$.
Therefore, we have $|\nu-(1-\rho_z)n|,|\bar\mu_m-(1-\rho_z^d)m|\leq\sqrt{n}\ln n$.
Hence, combining the first part of \Lem~\ref{Lemma_fexplicit} with \Cor~\ref{Lemma_FourierAux1},
we conclude that $g_{n,\nu,\bar\mu_m}(z)\leq K\gamma_0$.
Thus, the assertion follows from~(\ref{eqRueckschritt3}).
\qed\end{proof}
Choosing $\Gamma>0$ large enough, we can achieve that $\sum_{m:|m-m_0|> \Gamma\sigma}\mathcal{B}(m)\leq\alpha/K'$.
Therefore, \Lem~\ref{Lemma_RueckschrittAux} entails that
	\begin{equation}\label{eqRueckschritt4}
	S_2=\sum_{m: \Gamma\sigma<|m-m_0|\leq L\sqrt{n}}\mathcal{B}(m)\mathcal{Q}(m)\leq\alpha n^{-1}.
	\end{equation}

Concerning $S_1$, we employ \Prop~\ref{Prop_Bin} to obtain
	\begin{eqnarray}\label{eqRueckschritt5}
	\mathcal{B}(m)&\sim&\frac1{\sqrt{2\pi}\sigma}\exp\brk{-\frac{(m-m_0)^2}{2\sigma^2}}
		\qquad\mbox{ if }|m-m_0|\leq\Gamma\sigma.
	\end{eqnarray}
In addition, let $0<\rho_m<1$ signify the unique number such that $\rho_m=\exp(\frac{dm}n(\rho_m^{d-1}-1))$.
Then \Lem~\ref{Lemma_rhoz} yields
	$\rho_m=\rho+\Delta_m/n+o(n^{-1/2})$,
where $\Delta_m=-\frac{m-m_0}{\sigma}\cdot\sigma_\order\lambda$.
Hence, $1-\rho_m^d=1-\rho^d-\Xi_m/m+o(n^{-1/2})$, where $\Xi_m=\frac{dm_0}n\Delta_m\rho^{d-1}$.
Thus, \Thm~\ref{Thm_Hnmlocal} entails that $\mathcal{Q}(m)\sim\varphi(m)$, where
	\begin{eqnarray}\label{eqRueckschritt6}
	\varphi(m)&=&
		\inv{2\pi\sqrt{\tsigma_\order^2\tsigma_\size^2-\tsigma_{\order\size}^2}}\\
		&&\;\times
			\exp\brk{-\frac{\tsigma_\order^2\tsigma_\size^2}{2(\tsigma_\order^2\tsigma_\size^2-\tsigma_{\order\size}^2)}
			\left(\frac{(x+\Delta_m)^2}{\tsigma_\order^2}-\frac{2\tsigma_{\order\size}(x+\Delta_m)(y+\Xi_m)}
				{\tsigma_\order^2\tsigma_\size^2}+\frac{(y+\Xi_m)^2}{\tsigma_\size^2}\right)}.
	\nonumber
	\end{eqnarray}
Now, combining~(\ref{eqRueckschritt5}) and~(\ref{eqRueckschritt6}), we can approximate the sum $S_1$ by an integral as follows:
	\begin{eqnarray}\label{eqRueckschritt7}
	S_1&\sim&\sum_{m:|m-m_0|\leq\Gamma\sigma}\frac1{\sqrt{2\pi}\sigma}\exp\brk{-\frac{(m-m_0)^2}{2\sigma^2}}\varphi(m)
		\sim\int_{-\Gamma}^{\Gamma}\varphi(m_z)\phi(z)dz.
	\end{eqnarray}
Further, since $\Delta_{m_z}=-z\sigma_{\order}\lambda=-z\Theta(\sqrt{n})$ and $\Xi_{m_z}=-zd\sigma_{\order}\lambda m_0\rho^{d-1}/n=-z\Theta(\sqrt{n})$,
and because $\tau_\order,\tau_\size,\tau_{\order \size}=\Theta(\sqrt{n})$, the function $\varphi(m_z)$ decays exponentially
as $z\rightarrow\infty$.
Therefore, choosing $\Gamma$ large enough, we can achieve that
	\begin{equation}\label{eqRueckschritt8}
	\int_{\RR\setminus\brk{-\Gamma,\Gamma}}\varphi(m_z)\phi(z)dz<\alpha/n.
	\end{equation}

Combining~(\ref{eqRueckschritt1}), (\ref{eqRueckschritt2}), (\ref{eqRueckschritt4}), (\ref{eqRueckschritt7}), and~(\ref{eqRueckschritt8}), 
we obtain $|\mathcal{P}-\int_{-\infty}^{\infty}\varphi(m_z)\phi(z)dz|\leq3\alpha/n$.
Finally, a trite computation shows that the integral $\int_{-\infty}^{\infty}\varphi(m_z)\phi(z)dz$ equals
the expression $P(x,y)$ from \Thm~\ref{Thm_Hnplocal}.

\section{The Probability that $\hnm$ is Connected: Proof of \Thm~\ref{thm:cnm}}\label{sect:cnm}

In this section we follow the way paved in~\cite{CMS04} to derive the probability that $H_d(\nu,\mu)$
is connected (\Thm~\ref{thm:cnm}) from the local limit theorem for $\hnp$ (\Thm~\ref{Thm_Hnplocal}).
Let $\JJJ\subset(d(d-1)^{-1},\infty)$ be a compact interval, and let $\mu(\nu)$ be a sequence such that
$\zeta=\zeta(\nu)=d\mu/\nu\in\JJJ$ for all $\nu$.
The basic idea is to choose $n$ and $p$ in such a way that $|\nu-\Erw(\order(\hnp))|,|\mu-\Erw(\size(\hnp))|$ are ``small'',
i.e., $\nu$ and $\mu$ will be ``probable'' outcomes of $\order(\hnp)$ and $\size(\hnp)$.
Since given that $\order(\hnp)=\nu$ and $\size(\hnp)=\mu$, the largest component of $\hnp$ is a uniformly distributed
connected graph of order $\nu$ and size $\mu$, we can then express the probability that $H_d(\nu,\mu)$
is connected in terms of the probability $\chi=\pr\brk{\order(\hnp)=\nu\wedge \size(\hnp)=\mu}$. 
More precisely, one can derive from \Thm~\ref{Thm_global} that
	\begin{equation}\label{eqchi}
	\chi\sim\bink{n}{\nu}\bink{\bink{\nu}d}\mu c_d(\nu,\mu) p^{\mu}(1-p)^{\bink{n}{d}-\bink{n-\nu}d-\bink{\nu}d+\mu},
	\end{equation}
where the expression on the right hand side equals the \emph{expected} number of components of order $\nu$ and size $\mu$
occurring in $\hnp$.
Then, one solve~(\ref{eqchi}) to obtain an explicit expression for $c_d(\nu,\mu)$ in terms of $\chi$.
The (somewhat technical) details of approach were carried out in~\cite{CMS04}, where the following lemma was established.

\begin{lemma}\label{Lemma_CnmAux}
Suppose that $\nu>\nu_0$ for some large enough number $\nu_0=\nu_0(\JJJ)$.
Then there exist an integer $n=n(\nu)=\Theta(\nu)$ and a number $0<p=p(\nu)<1$ such that the following is true.
\begin{enumerate}
\item Let $c=\bink{n-1}{d-1}p$.
	Then $(d-1)^{-1}<c=O(1)$, and letting $0<\rho=\rho(c)<1$ signify the solution to~(\ref{eqCOMV}), we have
	$\nu=(1-\rho)n$ and $|\mu-(1-\rho^d)\bink{n}dp|=O(1)$.
\item The solution $r$ to~(\ref{eq:hnp}) satisfies $|r-\rho|=o(1)$.
\item Furthermore,
		$c_d(\nu,\mu)\sim\chi^{-1}\cdot nuvw\cdot\Phi^{\nu}$,
	where $\Phi=(1-r)^{1-\zeta} \,r^{r/(1-r)} \,\bc{1-r^d}^{\zeta/d},$
		\begin{eqnarray}\label{eqAminCnm1}
		u&=&2\pi\sqrt{r(1-r)(1-r^d)c/d},\\
		v&=&\exp\brk{\frac{(d-1)r c}{2(1-r)}\big(1-2r^{d-1}+r^{d-2}\big)},\mbox{ and}\\
		w&=&\left\{\begin{array}{cl}
			\exp\brk{\frac{c^2}{2d}(1-r^d)\cdot\frac{1-r^d-(1-r)^d}{(1-r)^d}}=\frac{c^2r(1+r)}{2}&\mbox{ if $d=2$},\\
			1&\mbox{ if }d>2.\end{array}\right.
		\label{eqAminCnm2}
		\end{eqnarray}
\end{enumerate}
\end{lemma}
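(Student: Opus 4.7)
The plan is to handle the three conclusions of the lemma in turn, choosing $n$ and $p$ so that $\nu$ and $\mu$ sit close to the expected order and size of the largest component of $\hnp$, which is the setting where \Thm~\ref{Thm_Hnplocal} applies.

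First, to define $n$ and $p$: given $\zeta=d\mu/\nu\in\JJJ$, let $r$ be the unique solution to~(\ref{eq:hnp}); this $r$ is bounded away from $0$ and $1$ in terms of $\JJJ$ alone. Set $c=\zeta(1-r)/(1-r^d)$. A direct manipulation shows that~(\ref{eq:hnp}) is equivalent to $r=\exp(c(r^{d-1}-1))$, which is the transcendental equation~(\ref{eqCOMV}) for parameter $c$. Take $n$ to be the nearest integer to $\nu/(1-r)$, and choose $p$ so that $\bink{n-1}{d-1}p=c$. The hypotheses $\zeta>d/(d-1)$ and $\zeta\in\JJJ$ translate into $(d-1)^{-1}<c=O(1)$, and since $\bink{n}{d}p=cn/d$ with $(1-r^d)cn/d=\zeta\nu/d+O(1)=\mu+O(1)$, conclusion~1 follows directly.

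Second, conclusion~2 is nearly automatic: $c$ is constructed so that both $\rho$ (from~(\ref{eqCOMV})) and $r$ (from~(\ref{eq:hnp})) satisfy the same equation with the same parameter, so $\rho=r$ up to the effect of the $O(1/n)$ rounding of $n$. The implicit function theorem applied to~(\ref{eqCOMV}) at the supercritical fixed point — where $1-c(d-1)r^{d-1}>0$ by~(\ref{eqrhoupper}) — yields $|r-\rho|=O(1/n)=o(1)$.

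Third, for the explicit formula of conclusion~3, the fundamental identity is~(\ref{eqchi}): given that $\order(\hnp)=\nu$ and $\size(\hnp)=\mu$, the largest component of $\hnp$ is a uniformly random connected hypergraph of order $\nu$ and size $\mu$, and \Thm~\ref{Thm_global} ensures that w.h.p.\ the largest component is unique, so the expected number of components of this order and size is asymptotically $\chi$. Solving~(\ref{eqchi}) gives
\[
c_d(\nu,\mu)\sim\chi\cdot\bink{n}{\nu}^{-1}\bink{\bink{\nu}{d}}{\mu}^{-1}p^{-\mu}(1-p)^{-(\bink{n}{d}-\bink{n-\nu}{d}-\bink{\nu}{d}+\mu)}.
\]
Then apply Stirling's formula to the two binomials, expand the exponent of $(1-p)$ by writing $\bink{n}{d}-\bink{n-\nu}{d}$ as a polynomial in $n$ and $n-\nu$, and use the defining equations for $r$ and $c$ to collapse the exponential-order contributions into the single factor $\Phi^\nu$. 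The subexponential factor $u$ comes from the $1/\sqrt{\cdot}$ factors in Stirling; the factor $v$ arises from the $O(\nu)$-order terms in the Taylor expansion of $\ln(1-p)\cdot[\bink{n}{d}-\bink{n-\nu}{d}-\bink{\nu}{d}+\mu]$; and the constant-order correction beyond these is $o(1)$ when $d>2$ (so $w=1$), whereas for $d=2$ the next Taylor term of $\ln(1-p)$ contributes an additional surviving $O(1)$ factor yielding the stated $w$.

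The main obstacle is the third step: meticulously tracking every subexponential factor — in particular the dichotomy between $d=2$ and $d>2$ in $w$ — through a multi-term Taylor expansion of both $\ln(1-p)$ and of $\bink{n-\nu}{d}$ in powers of $\nu/n$. The leading $\Phi^\nu$ cancellation is dictated by the transcendental equation~(\ref{eq:hnp}), but isolating the $nuvw$ prefactor demands careful bookkeeping of terms that are individually of order $n$ but whose sum is only $O(1)$; this is precisely the technical computation worked out in~\cite{CMS04}.
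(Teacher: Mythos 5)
Your proposal follows essentially the same route as the paper, which in fact does not prove this lemma at all but imports it from~\cite{CMS04}; the surrounding text sketches precisely your argument: establish~\eqref{eqchi} by a first--moment count of components of prescribed order and size (made two-sided by the uniqueness of the giant component from \Thm~\ref{Thm_global}), choose $n$ and $p$ so that $\nu$ and $\mu$ are the typical order and size, and solve~\eqref{eqchi} for $c_d(\nu,\mu)$ via Stirling. Incidentally, your derivation gives $c_d(\nu,\mu)\sim\chi\cdot nuvw\cdot\Phi^\nu$, which agrees with the formula the paper actually uses after the lemma and confirms that the exponent $-1$ on $\chi$ in the lemma statement is a typo. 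Two small caveats. First, your construction fixes $c$ from $r$ and then rounds $n$, which makes $\rho=r$ exactly but only yields $\nu=(1-\rho)n+O(1)$; the statement has $\nu=(1-\rho)n$ exactly, which one obtains by instead fixing $n$ and tuning $p$, at the price of $|r-\rho|=O(n^{-1})$ --- either variant is harmless for the application, but your remark attributing the gap between $r$ and $\rho$ to the rounding of $n$ has the bookkeeping backwards, since with $\bink{n-1}{d-1}p=c$ held exactly the two transcendental equations coincide. Second, the only part of the lemma with real content --- verifying that the Stirling/Taylor expansion produces exactly the stated $u$, $v$ and $w$, including the $d=2$ versus $d>2$ dichotomy --- is described as a plan rather than executed; this mirrors the paper, which defers exactly that computation to~\cite{CMS04}, so your write-up is a faithful reconstruction of the cited argument rather than a self-contained proof.
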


Now, \Thm~\ref{Thm_Hnplocal} yields the asymptotics
	$\chi\sim(2\pi)^{-1}\brk{\sigma_\order^2\sigma_\size^2-\sigma_{\order\size}^2}^{\frac12},$
where 
	\begin{eqnarray}\label{eqAminCnm3}
	\sigma_\order^2&=&\frac{\rho\left(1-\rho + c(d-1)(\rho-\rho^{d-1})\right)}{(1-c(d-1)\rho^{d-1})^2}n,\\
	\sigma_\size^2&=&c^2\rho^d\frac{2 + c(d-1)\rho^{2d-2} -2c(d-1)\rho^{d-1} +c(d-1)\rho^d - \rho^{d-1}-\rho^d}{(1-c(d-1)\rho^{d-1})^2}n + (1-\rho^d)\frac{cn}{d},\label{eqAminCnm4}\\
	\sigma_{\order\size}&=&c\rho\frac{1-\rho^d - c(d-1)\rho^{d-1}(1-\rho)}{(1-c(d-1)\rho^{d-1})^2}n.\label{eqAminCnm5}
	\end{eqnarray}
Further, since $r\sim\rho$, $n=\nu/(1-\rho)$, and $c\sim\frac{1-r}{1-r^d}\zeta$, we can express
(\ref{eqAminCnm1})--(\ref{eqAminCnm5}) solely in terms of $\nu$, $r$, and $\zeta$.
As $c_d(\nu,\mu)\sim\chi\cdot nuvw\cdot\Phi^{\nu}$ by \Lem~\ref{Lemma_CnmAux}, we thus
obtain an explicit formula for $c_d(\nu,\mu)$ in terms of $\nu$, $r$, and $\zeta$.
Finally, simplifying this formula via elementary manipulations, we obtain the expressions stated in \Thm~\ref{sect:cnm}.

\begin{remark}
While Lemma~\ref{Lemma_CnmAux} was established in Coja-Oghlan, Moore, and Sanwalani~\cite{CMS04}, the exact limiting distribution of $\order,\size(\hnp)$
was not known at that point.
Therefore, Coja-Oghlan, Moore, and Sanwalani could only compute the $c_d(\nu,\mu)$ up to a constant factor.
By contrast, combining \Thm~\ref{Thm_Hnplocal} with \Lem~\ref{Lemma_CnmAux},
here we have obtained \emph{tight} asymptotics for $c_d(\nu,\mu)$.
\end{remark}

\section{The Probability that $\hnp$ is Connected: Proof of \Thm~\ref{thm:cd}}\label{sect:proofc}

Let $\JJJ\subset(0,\infty)$ be a compact set, and let
$0<p=p(\nu)<1$ be a sequence such that $\zeta=\zeta(\nu)=\bink{\nu-1}{d-1}p\in\JJJ$ for all $\nu$.
To compute the probability $c_d(\nu,p)$ that a random hypergraph $H_d(\nu,p)$ is connected, we will establish that
	\begin{equation}\label{eqHnp1}
	\pr\brk{\cN(\hnp) = \nu} \sim \bink{n}{\nu} c_d(\nu,p)(1-p)^{\bink{n}{d}-\bink{n-\nu}{d}-\bink{\nu}{d}}
	\end{equation}
for a suitably chosen integer $n>\nu$.
Then, we will employ \Thm~\ref{Thm_Nlocal} (the local limit theorem for $\order(\hnp)$) to compute the l.h.s.\ of (\ref{eqHnp1}), so that we can just
solve~(\ref{eqHnp1}) for $c_d(\nu,p)$.

We pick $n$ as follows.
By \Thm~\ref{Thm_global} for each integer $N$ such that
$\bink{N-1}{d-1}p>(d-1)^{-1}$ the transcendental equation $\rho(N)=\exp(\bink{N-1}{d-1}p(\rho(N)^{d-1}-1))$ has
a unique solution $\rho(N)$ that lies strictly between $0$ and $1$.
We let $n=\max\{N:(1-\rho(N))n'<\nu\}$.
Moreover, set $\rho=\rho(n)$ and $c=\bink{n-1}{d-1}p$, and let $0<s<1$ be such that $(1-s)n=\nu$.
Then
	\begin{equation}\label{eqHnp2}
	|n -(1-\rho)\nu|< O(1).
	\end{equation}

To establish (\ref{eqHnp1}), note that the r.h.s.\ is just the expected number of components of order $\nu$ in $\hnp$.
For there are $\bink{n}{\nu}$ ways to choose the vertex set $\comp$ of such a component, and
the probability that $\comp$ spans a connected hypergraph is $c_d(\nu,p)$.
Moreover, if $\comp$ is a component, then $\hnp$ features no edge that connects $\comp$ with $V\setminus\comp$,
and there are $\bink{n}{d}-\bink{n-\nu}{d}-\bink{\nu}{d}$ possible edges of this type, each being present with probability $p$ independently.
Hence, we conclude that
	\begin{equation}\label{eqHnp3}
	\pr\brk{\cN(\hnp) = \nu} \leq \bink{n}{\nu} c_d(\nu,p)(1-p)^{\bink{n}{d}-\bink{n-\nu}{d}-\bink{\nu}{d}}.
	\end{equation}	
On the other hand,
	\begin{equation}\label{eqHnp4}
	\pr\brk{\cN(\hnp) = \nu} \leq \bink{n}{\nu} c_d(\nu,p)(1-p)^{\bink{n}{d}-\bink{n-\nu}{d}-\bink{\nu}{d}}
		\pr\brk{\order(H_d(n-\nu,p))<\nu},
	\end{equation}	
because the r.h.s.\ equals the probability that $\hnp$ has \emph{exactly} one component of order $\nu$.
Furthermore, as $|\nu -(1-\rho)n|< O(1)$ by (\ref{eqHnp2}), \Thm~\ref{Thm_global} entails that $\pr\brk{\order(H_d(n-\nu,p))<\nu}\sim1$.
Hence, combining~(\ref{eqHnp3}) and~(\ref{eqHnp4}), we obtain~(\ref{eqHnp1}).

To derive an explicit formula for $c_d(\nu,p)$ from~(\ref{eqHnp1}), we need the following lemma.

\begin{lemma}\label{Lemma_HnpAux}
\begin{enumerate}
\item We have $c=\zeta(1-s)^{1-d}\bc{1+\bink{d}2\frac{s}{(1-s)n}+O(n^{-2})}$.
\item The transcendental equation~(\ref{eq:varrho}) has a unique solution $0<\varrho<1$, which satisfies $|s-\varrho|=O(n^{-1})$.
\item Letting $\Psi(x) = (1-x)x^\frac{x}{1-x}\exp\bc{\frac{\zeta}{d}\cdot\frac{1-x^d-(1-x)^d}{(1-x)^d}}$, we have
	$\Psi(\varrho)^\nu \sim \Psi(s)^\nu$.
\end{enumerate}
\end{lemma}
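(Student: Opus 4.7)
The plan is to handle the three assertions in order; the essential new content lies in part~(3), where one must recognise that $\varrho$ is a critical point of $\log\Psi$.

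For part~(1), I would start from
\[
\frac{c}{\zeta}=\binom{n-1}{d-1}\binom{\nu-1}{d-1}^{-1}=(1-s)^{1-d}\prod_{k=1}^{d-1}\frac{1-k/n}{1-k/((1-s)n)},
\]
Taylor-expand each factor of the product to order $n^{-1}$, and collect the contributions using $\sum_{k=1}^{d-1}k=\binom{d}{2}$. This is a direct calculation.

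For part~(2), uniqueness of $\varrho\in(0,1)$ follows exactly as for $\rho$ in \Thm~\ref{Thm_global}, by inspecting the sign and monotonicity of $x\mapsto x-\exp[\zeta(x^{d-1}-1)/(1-x)^{d-1}]$. To bound $|s-\varrho|$, I would proceed in two steps. First, \eqref{eqHnp2} yields $0\le\rho-s<(1-\rho)/n$ immediately, hence $|s-\rho|=O(n^{-1})$. Second, part~(1) combined with this bound gives $c=\zeta(1-\rho)^{1-d}+O(n^{-1})$, so the defining equation $\rho=\exp(c(\rho^{d-1}-1))$ becomes
\[
\rho=\exp\brk{\frac{\zeta(\rho^{d-1}-1)}{(1-\rho)^{d-1}}}\cdot\bc{1+O(n^{-1})}.
\]
Thus $\rho$ satisfies~\eqref{eq:varrho} up to an $O(n^{-1})$ perturbation; since the derivative of the associated defining function is bounded away from $0$ at $x=\varrho$ uniformly in $\zeta\in\JJJ$, the implicit function theorem gives $|\rho-\varrho|=O(n^{-1})$, and the triangle inequality closes the step.

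For part~(3), the estimate $|s-\varrho|=O(n^{-1})$ is \emph{not} sufficient by itself: it would only give $\Psi(s)^\nu/\Psi(\varrho)^\nu=\exp(\nu\cdot O(n^{-1}))=\Theta(1)$. The key observation is that $\varrho$ is a critical point of $\log\Psi$. Starting from
\[
\log\Psi(x)=\log(1-x)+\frac{x\log x}{1-x}+\frac{\zeta}{d}\brk{\frac{1-x^d}{(1-x)^d}-1}
\]
and differentiating, one obtains after simplification
\[
(\log\Psi)'(x)=\frac{\log x}{(1-x)^2}+\frac{\zeta(1-x^{d-1})}{(1-x)^{d+1}},
\]
which vanishes precisely when $\log x=\zeta(x^{d-1}-1)/(1-x)^{d-1}$, i.e.\ on the solutions of~\eqref{eq:varrho}. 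A second-order Taylor expansion then yields $\log\Psi(s)-\log\Psi(\varrho)=O((s-\varrho)^2)=O(n^{-2})$, whence $\nu[\log\Psi(s)-\log\Psi(\varrho)]=O(n^{-1})=o(1)$ and $\Psi(s)^\nu\sim\Psi(\varrho)^\nu$. The main obstacle is spotting this critical-point structure; without it the $O(n^{-1})$ perturbation from part~(2) would translate into a constant-factor discrepancy between $\Psi(s)^\nu$ and $\Psi(\varrho)^\nu$, which would spoil the derivation of \Thm~\ref{thm:cd}. Everything else reduces to standard perturbation estimates and direct calculus.
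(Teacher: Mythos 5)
Your proposal is correct and follows essentially the same route as the paper: part (1) by expanding the ratio of binomial coefficients as a product, part (2) by a uniqueness argument for $t\mapsto\exp(\zeta(t^{d-1}-1)/(1-t)^{d-1})$ combined with a perturbation/implicit-function-theorem estimate, and part (3) by observing that $\varrho$ is a critical point of $\Psi$ (the paper differentiates $\Psi$ itself rather than $\log\Psi$, which is immaterial) so that a second-order Taylor expansion gives $\Psi(s)-\Psi(\varrho)=O(n^{-2})$. Your identification of the critical-point structure as the essential step of part (3) matches the paper exactly.
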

\begin{proof}
Regarding the first assertion, we note that
\begin{eqnarray}
\frac{(1-s)^{d-1}\bink{n-1}{d-1}}{\bink{(1-s)n-1}{d-1}}
 &=&\prod_{j=1}^{d-1}1+\frac{sj}{(1-s)n-j}
 =1+\bink{d}2\frac{s}{1-s}+O(n^{-2}).\label{eqHnpAux1}
\end{eqnarray}
Since $\bink{\nu-1}{d-1}c=\zeta\binnd$ and $\nu=(1-s)n$, (\ref{eqHnpAux1}) implies 1.

With respect to 2., set
	$$\varphi_z:(0,1)\rightarrow\RR,\ t\mapsto\exp\bc{z\frac{t^{d-1}-1}{(1-t)^{d-1}}}\mbox{ for }z>0.$$
Then $\lim_{t\searrow0}\varphi_z(t)=\exp(-z)>0$, while $\lim_{t\nearrow1}\varphi_z(t)=0$.
In addition, $\varphi_z$ is convex for any $z>0$.
Therefore, for each $z>0$ there is a unique $0<t_z<1$ such that $t_z=\varphi_z(t_z)$, whence (\ref{eq:varrho}) has the
unique solution $0<\varrho=t_{\zeta}<1$.
Moreover, letting $\zeta'=(1-\rho)^{d-1}c$, we have $\rho=t_{\zeta'}$.
Thus, since $t\mapsto t_z$ is differentiable by the implicit function theorem and
$|\zeta-\zeta'|=O(n^{-1})$ by 1., we conclude that $|\varrho-\rho|=O(n^{-1})$.
In addition, $|s-\rho|=O(n^{-1})$ by (\ref{eqHnp2}).
Hence, $|s-\varrho|=O(n^{-1})$, as desired.

To establish the third assertion, we compute
\begin{eqnarray}\nonumber
\frac{\partial}{\partial x}\Psi(x) &=&
	(1-x)^{-d-1}x^\frac{2x-1}{1-x}\exp\bc{\frac{\zeta}{d}\frac{1-x^d-(1-x)^d}{(1-x)^d}}\\
	&&\qquad\qquad\qquad\times\bc{c(1-x)(x-x^d)+(1-x)^d x\ln x}.\label{eqDiffPsi}
\end{eqnarray}
As $\varrho=\exp\bc{\zeta\frac{\varrho^{d-1}-1}{(1-\varrho)^{d-1}}}$, (\ref{eqDiffPsi}) entails that
$\frac{\partial}{\partial x}\Psi(\varrho)=0$. 
Therefore, Taylor's formula yields that
$\Psi(s)-\Psi(\varrho) = O(s-\varrho)^2=O(n^{-2}),$
because $s-\varrho=O(n^{-1})$ by the second assertion.
Consequently, we obtain
	$$\bcfr{\Psi(s)}{\Psi(\varrho)}^n=\bc{1+\frac{\Psi(s)-\Psi(\varrho)}{\Psi(\varrho)}}^n
			\sim\exp\bc{n\cdot \frac{\Psi(s)-\Psi(\varrho)}{\Psi(\varrho)}}
			=\exp(O(n^{-1}))\sim 1,$$
thereby completing the proof of 3.
\qed\end{proof}

\noindent\emph{Proof of \Thm~\ref{thm:cd}.}
Since $|\nu-(1-\rho)n|< O(1)$ by~(\ref{eqHnp2}), 
\Thm~\ref{Thm_Nlocal} yields that $\pr\brk{\order(\hnp)=\nu}\sim(2\pi)^{-\frac12}\sigma_\order^{-1}$,
where $\sigma_\order$ is given by~(\ref{eq:defsigmaN}).
Plugging this formula into~(\ref{eqHnp1}) and estimating the binomial coefficient $\bink{n}{\nu}$ via Stirling's formula, we obtain
\begin{eqnarray}\label{eq:cd}
c_d(\nu,p)&\sim&s^{s n}(1-s)^{(1-s)n} (1-p)^{\bink{n-\nu}{d}+\bink{\nu}{d}-\bink{n}{d}}\cdot u,\mbox{ where}\\
	u^2&=&\frac{(1-s)(1-c(d-1)s^{d-1})^2}{1-s + c(d-1)(s-s^{d-1})}\label{eq:cdu}.
\end{eqnarray}
Let us consider the cases $d=2$ and $d>2$ separately.
\begin{description}
\item[1st case: $d=2$.]
	Since $\nu=(1-s)n$, we get
		\begin{eqnarray*}
		(1-p)^{\bink{n-\nu}d+\bink{\nu}d-\bink{n}d}&=&(1-p)^{s(s-1)n^2}
			\sim\exp\bc{cs(1-s)(n+1)+\frac{c^2}2s(1-s)}.
		\end{eqnarray*}
	Moreover,	(\ref{eq:cdu}) simplifies to $u=1-cs$.
	Hence, using \Lem~\ref{Lemma_HnpAux} and recalling that $n=(1-s)^{-1}\nu$, we can restate~(\ref{eq:cd}) as
		\begin{eqnarray}
		c_2(\nu,p)
			&\sim&\Psi_2(s,\zeta)^{\nu}
					\exp\brk{\frac{\zeta s^2}{1-s}+\zeta s+\frac{ \zeta^2s}{2(1-s)}}
							\brk{1-\zeta\frac{s}{1-s}}\nonumber\\
			&\sim&\Psi_2(\varrho,\zeta)^{\nu}
				\exp\brk{\frac{\zeta \varrho^2}{1-\varrho}
+\zeta \varrho+\frac{ \zeta^2\varrho}{2(1-\varrho)}}
							\brk{1-\zeta\frac{\varrho}{1-\varrho}}\nonumber\\
			&=&\Psi_2(\varrho,\zeta)^{\nu}
				\exp\brk{\frac{\zeta \varrho (2+\zeta)}{2(1-\varrho)}}
							\brk{1-\zeta\frac{\varrho}{1-\varrho}}
						\label{eqHnp5}
		\end{eqnarray}
	Finally, for $d=2$ the unique solution to~(\ref{eq:varrho}) is just $\varrho=\exp(-\zeta)$.
	Plugging this into~(\ref{eqHnp5}), we obtain the formula stated in \Thm~\ref{thm:cd}.
\item[2nd case: $d>2$.]
	We have $\bink{n}dp^2=o(1)$, because $\bink{n-1}{d-1}p=c=\Theta(1)$.
	Hence, as $\nu=(1-s)n$, we get
		\begin{eqnarray*}
		v&=&(1-p)^{\bink{\nu}d+\bink{n-\nu}{d}-\bink{n}d}
			\sim\exp\brk{\bc{p+\frac{p^2}2}\bc{\bink{n}d-\bink{\nu}d-\bink{n-\nu}{d}}}\\
			&\sim&\exp\bc{\frac{cn}d(1-s^d-(1-s)^d)+\frac{c(d-1)}2((1-s){s}^{d-1}+s(1-s)^{d-1})}.
		\end{eqnarray*}
	Plugging this into~(\ref{eq:cd}) and invoking \Lem~\ref{Lemma_HnpAux}, we obtain
		\begin{eqnarray*}
		c_d(\nu,p)&\sim&{s}^{sn}(1-s)^{(1-s)n}uv\\
						&\sim&\Psi_d(\varrho,\zeta)^{\nu}\exp\brk{\frac{\zeta(d-1)\varrho(1-\varrho^d-(1-\varrho)^d)}{2(1-\varrho)^d}+\frac{\zeta(d-1)\varrho}2\bc{\bcfr{\varrho}{1-\varrho}^{d-2}+1}}\\
			&&\qquad\qquad\times\brk{1-\zeta(d-1)\bcfr{\varrho}{1-\varrho}^{d-1}}\brk{1+\zeta(d-1)(\varrho-\varrho^{d-1})(1-\varrho)^{-d}}^{-\frac12},
		\end{eqnarray*}
	which is exactly the formula stated in \Thm~\ref{thm:cd}.
\end{description}
\qed

\section{The Conditional Edge Distribution: Proof of \Thm~\ref{Thm_edges}}\label{sect:edgedist}

Let $\JJJ\subset(0,\infty)$ and $\III\subset\RR$ be compact sets, and let $0<p=p(\nu)<1$ be a sequence such that
$\zeta=\zeta(\nu)=\bink{\nu-1}{d-1}p\in\JJJ$ for all $\nu$.
To compute the limiting distribution of the number of edges of $H_d(\nu,p)$ given that this random hypergraph is connected,
we choose $n>\nu$ as in \Sec~\ref{sect:proofc}.
Thus, letting $c=\bink{n-1}{d-1}p$, we know from \Lem~\ref{Lemma_HnpAux}  that $c>(d-1)^{-1}$, and that
the solution $0<\rho<1$ to (\ref{eqCOMV}) satisfies $(1-\rho)n\leq\nu\leq(1-\rho)n+1$.
Now, we investigate the random hypergraph $\hnp$ \emph{given that } $\order(\hnp)=\nu$.
Then the largest component of $\hnp$ is a random hypergraph $H_d(\nu,p)$ \emph{given that $H_d(\nu,p)$ is conncected}.
Therefore,
	\begin{eqnarray}\nonumber
	\pr\brk{|E(H_d(\nu,p)|=\mu|H_d(\nu,p)\mbox{ is connected}}
		&=&\pr\brk{\size(\hnp)=\mu|\order(\hnp)=\nu}\\
		&=&\frac{\pr\brk{\size(\hnp)=\mu\wedge\order(\hnp)=\nu}}{\pr\brk{\order(\hnp)=\nu}}
		\label{eqCond1}
	\end{eqnarray}
Furthermore, as $|\nu-(1-\rho)n|< O(1)$, we can
apply \Thm~\ref{Thm_Hnplocal} to get an explicit expression for the r.h.s.\ of~(\ref{eqCond1}).
Namely, for any integer $\mu$ such that $y=n^{-\frac12}(\mu-(1-\rho^d)\bink{n}dp)\in\III$ we obtain
	\begin{eqnarray}\nonumber
	\pr\brk{|E(H_d(\nu,p)|=\mu|H_d(\nu,p)\mbox{ is connected}}\\
		&\hspace{-6cm}\sim&\;\hspace{-3cm}
		\bcfr{\sigma_\order^2}{2\pi(\sigma_\order^2\sigma_\size^2-\sigma_{\order\size}^2)}^{\frac12}
			\exp\bc{-\frac{\sigma_\order^2y^2}{2(\sigma_\order^2\sigma_\size^2-\sigma_{\order\size}^2)}},
	\label{eqCond2}
	\end{eqnarray}
where 	\begin{eqnarray}\label{eqCond3}
	\sigma_\order^2&=&\frac{\rho\left(1-\rho + c(d-1)(\rho-\rho^{d-1})\right)}{(1-c(d-1)\rho^{d-1})^2}n,\\
	\sigma_\size^2&=&c^2\rho^d\frac{2 + c(d-1)\rho^{2d-2} -2c(d-1)\rho^{d-1} +c(d-1)\rho^d - \rho^{d-1}-\rho^d}{(1-c(d-1)\rho^{d-1})^2}n + (1-\rho^d)\frac{cn}{d},\label{eqCond4}\\
	\sigma_{\order\size}&=&c\rho\frac{1-\rho^d - c(d-1)\rho^{d-1}(1-\rho)}{(1-c(d-1)\rho^{d-1})^2}n.\label{eqCond5}
	\end{eqnarray}

Thus, we have derived a formula for $\pr\brk{|E(H_d(\nu,p)|=\mu|H_d(\nu,p)\mbox{ is connected}}$
in terms of $n$, $c$, and $\rho$.
In order to obtain a formula in terms of $\nu$, $\zeta$, and the solution $\varrho$ to (\ref{eq:varrho}), we just
observe that $|c-\zeta(1-\rho)^{1-d}|=O(n^{-1})$ and $|\rho-\varrho|=O(n^{-1})$ by \Lem~\ref{Lemma_HnpAux}, and that
$|n-(1-\rho)^{-1}\nu|=O(n^{-1})$.
Finally, substituting $\varrho$ for $\rho$, $\zeta(1-\varrho)^{1-d}$ for $c$, and $(1-\varrho)^{-1}\nu$ for $n$
in (\ref{eqCond3})--(\ref{eqCond5}) and plugging the resulting expressions into~(\ref{eqCond2}) yields the formula for
$\pr\brk{|E(H_d(\nu,p)|=\mu|H_d(\nu,p)\mbox{ is connected}}$ stated in \Thm~\ref{thm:edgedist}.

\subsubsection{Acknowledgment.}
We thank Johannes Michali\v{c}ek for helpful discussions on the use of Fourier analysis for proving \Thm~\ref{Thm_Hnplocal}.

\end{document}